\numberwithin{equation}{section}
\newcommand{\bh}{\boldsymbol{h}}
\theoremstyle{plain}
\newtheorem{theorem}{Theorem}[section]
\newtheorem{proposition}[theorem]{Proposition}
\newtheorem{corollary}[theorem]{Corollary}
\newtheorem{lemma}[theorem]{Lemma}
\theoremstyle{definition}
\newtheorem{definition}[theorem]{Definition}
\newtheorem{remark}[theorem]{Remark}
\newtheorem{example}[theorem]{Example}
\newcommand\restr[2]{{
  \left.\kern-\nulldelimiterspace #1 \right|_{#2} 
}}
\newcommand{\R}{\mathbb{R}}
\renewcommand{\d}{\mathrm{d}}
\newcommand{\Cinfty}{\mathscr{C}^\infty}
\newcommand{\T}{\mathrm{T}}
\newcommand{\cT}{\mathrm{T}^\ast}
\newcommand{\dker}{\ker \d \bm \eta}
\newcommand*{\inn}[1]{\iota_{#1}}
\newcommand{\Lie}{\mathscr{L}}
\newcommand{\X}{\mathfrak{X}}
\newcommand{\D}{\mathcal{D}}
\newcommand{\Ip}{\mathfrak{Im}}
\newcommand{\Rp}{\mathfrak{Re}}
\newcommand{\pd}[1]{\frac{\partial}{\partial #1}}
\newcommand{\tder}[1]{\frac{\text{d} #1}{\text{d}t}}
\newcommand{\bfX}{\mathbf{X}}
\newcommand{\qeddiamond}{\hfill $\diamond$}
\newcommand{\parder}[2]{\frac{\partial #1}{\partial #2}}
\newcommand{\tparder}[2]{\partial #1/\partial #2}
\newcommand{\boplus}{{\textstyle\bigoplus}}
\DeclareMathOperator{\rk}{rk}
\DeclareMathAlphabet{\mathpzc}{OT1}{pzc}{m}{it}
\def\d{\mathrm{d}}
\newcommand\finish{\xqed{ }}
\newcommand\xqed[1]{%
  \leavevmode\unskip\penalty9999 \hbox{}\nobreak\hfill
  \quad\hbox{#1}}
\title{k-symplectic Lie systems: theory and applications}
\begin{document}

\vspace{5em}

{\huge\sffamily\raggedright
    $k$-contact Lie systems: theory and applications
}
\vspace{2em}

{\large\raggedright
    \today
}

\vspace{3em}

{\Large\raggedright\sffamily
    Javier de Lucas
}

{\raggedright

    Department of Mathematical Methods in Physics, University of Warsaw, \\ ul. Pasteura 5, 02-093, Warszawa, Poland.\\
        \medskip
    Centre de Recherches Math\'ematiques, Universit\'e de Montr\'eal,\\ Pavillon André-Aisenstadt, 2920, Chemin de la Tour,  Montréal (Québec) Canada  H3T 1J4.\\
    e-mail: \href{mailto:javier.de.lucas@fuw.edu.pl}{javier.de.lucas@fuw.edu.pl} --- orcid: \href{https://orcid.org/0000-0001-8643-144X}{0000-0001-8643-144X}
}

\bigskip

{\Large\raggedright\sffamily
    Xavier Rivas
}\vspace{1mm}\newline
{\raggedright
    Department of Computer Engineering and Mathematics, Universitat Rovira i Virgili\\
    Avinguda Països Catalans 26, 43007, Tarragona, Spain.\\
    e-mail: \href{mailto:xavier.rivas@urv.cat}{xavier.rivas@urv.cat} --- orcid: \href{https://orcid.org/0000-0002-4175-5157}{0000-0002-4175-5157}
}

\bigskip

{\Large\raggedright\sffamily
    Tomasz Sobczak
}\vspace{1mm}\newline
{\raggedright
    Department of Mathematical Methods in Physics, University of Warsaw, \\ ul. Pasteura 5, 02-093, Warszawa, Poland.\\
    e-mail: \href{mailto:t.sobczak2@uw.edu.pl}{t.sobczak2@uw.edu.pl} --- orcid: \href{https://orcid.org/0009-0002-9577-0456}{0009-0002-9577-0456}
}

\vspace{3em}

{\large\bf\raggedright
    Abstract
}\vspace{1mm}\newline
{\noindent
  This paper introduces a new class of Lie systems that are Hamiltonian relative to a $k$-contact manifold. We  show that a recent distributional approach to $k$-contact manifolds along with a related $k$-contact Hamiltonian vector field notion allow us to understand  relevant Lie systems as Hamiltonian relative to a $k$-contact manifold. Our procedure  is  more general than previously known methods with this aim. As a result, we find that a plethora of Lie systems related to control and physical problems can be considered in a natural manner as $k$-contact Lie systems. We study their $t$-dependent and $t$-independent constants of motion, master symmetries of higher order, and other properties of interest. Finally, we use our new techniques and findings to study PDE Lie systems with a compatible $k$-contact manifold, some of which become Hamilton--De Donder--Weyl equations. 
}
\bigskip

{\large\bf\raggedright
    Keywords: } Lie system, superposition rule, $k$-contact manifold, $k$-contact distribution, control system, master symmetry, generalised constant of motion,  Hamilton--De Donder--Weyl equations. 
\medskip

{\large\bf\raggedright
    MSC2020 codes: {\it 34A05, 
    34A26 
    (primary),  17B66, 
    22E70 
     (secondary) }
}

\bigskip


\newpage

{\setcounter{tocdepth}{2}
\def\baselinestretch{1}
\small
\def\addvspace#1{\vskip 1pt}
\parskip 0pt plus 0.1mm
\tableofcontents
}

\section{Introduction}
\label{ch:Lie-systems}

Towards the end of the XIX century, there was much interest in determining $t$-dependent systems of first-order ordinary differential equations whose solutions could be described as a $t$-independent function of a generic family of particular solutions and some constants: the \textit{Lie systems} \cite{CL_11,CGM_00,LS_20,Lie1}. In particular, Lie proved that Lie systems are related to a certain curve in a finite-dimensional Lie algebra of vector fields: a {\it Vessiot--Guldberg Lie algebra}.

Lie systems have being studied along with different compatible geometric structures as this allowed one to study more easily their properties (see \cite{LS_20} for a survey on the topic). In particular, \textit{Lie--Hamilton systems} are Lie systems admitting a Vessiot--Guldberg Lie algebra of Hamiltonian vector fields relative to a Poisson bivector \cite{CLS_13,CGM_00}. Lie--Hamilton systems were the first class of Lie systems admitting a Vessiot--Guldberg Lie algebra of Hamiltonian vector fields relative to a geometric structure that was studied. In this case, the Poisson bivector enabled the derivation of their superposition rules and constants of motion in a simpler manner than previous methods via, for instance, the Poisson coalgebra method \cite{CLS_13,CGM_00,LS_20,BCHLS13}. Nevertheless, Lie--Hamilton systems were insufficient for studying many types of Lie systems \cite{LS_20}. Several types of Hamiltonian Lie systems relative to different geometric structures have been posteriorly analysed. They not only aided in comprehending the characteristics of various Lie systems but also facilitated the development of innovative results and techniques in differential geometry. For instance, $k$-symplectic Lie systems introduced Poisson brackets of certain families of functions in $k$-symplectic geometry to obtain superposition rules \cite{LV_15}, or, quite relevantly, Gr\`acia, de Lucas, Rom\'an-Roy, Mu\~noz-Lecanda,  Rivas and Vilari\~no introduced  multisymplectic Lie systems, which were used to study different types of multisymplectic reduction and tensor invariants for Lie systems \cite{LGRRV_22,GLMV_19}.

Contact Lie systems \cite{LR_23,Car_25,CCH_25} were recently introduced to study Lie systems of physical relevance  which, in some cases, cannot be studied via Lie--Hamilton systems. These works were also motivated by the interest of contact geometry in the study of dynamical systems, specially in those with a dissipative behaviour \cite{CCM_18,LL_19}. Contact Lie systems appeared for the first time in \cite{LR_23}. Recently, contact Lie systems have also appeared in the study of the reduction of Lie--Hamilton systems related to thermodynamical systems \cite{CCH_25}, while the characterisation of  contact Lie systems on low-dimensional manifolds was also related to the geometric analysis of hydrodynamic equations via quasi-rectifiable families of vector fields in \cite{GL_25}.

Recently $k$-contact geometry appeared as a generalisation of contact geometry to study field theories \cite{Riv_21}.  In particular, $k$-contact geometry allows for the description of the Hamilton--De Donder--Weyl equations for field theories. The initial idea behind $k$-contact geometry was to analyse the properties of a generalisation of contact forms to a certain type of $\mathbb{R}^k$-valued differential one-forms: the {\it $k$-contact forms}. Moreover, many other mathematical and physical related features of $k$-contact forms  appeared over the years \cite{LRS_25,Riv_23,Vil_25,GRR_22,GGMRR_21}.
This work introduces \textit{$k$-contact Lie systems}, namely Lie systems admitting a Vessiot--Guldberg Lie algebra of $\bm \eta$-Hamiltonian vector fields relative to a $k$-contact form, which retrieves contact Lie systems as a particular case for $k=1$ \cite{LR_23,CCH_25}. Then, we use $k$-contact geometry to investigate $k$-contact Lie systems. 

$k$-Contact geometry was, at first, mainly aimed at studying systems of partial differential equations. Then,  \cite{LRS_25} used $k$-contact forms to define $\bm \eta$-Hamiltonian vector fields relative to a $k$-contact form $\bm \eta$, which were used to analyse ordinary differential equations (see  Section \ref{sec:k-contact-geometry} for a review on the results in \cite{LRS_25} on this matter). This allows for our definition of $k$-contact Lie systems.  Relevantly, \cite{LRS_25} introduced  {\it $k$-contact distributions}, namely distributions given locally as the kernel of a $k$-contact form, which allowed for a deeper insight into $k$-contact geometry. In particular, a $k$-contact distribution is a generalisation of contact distributions, which are recovered for $k=1$. This work shows that this $k$-contact distributional approach is very practical for determining $k$-contact Lie systems with practical applications. The idea is that obtaining a $k$-contact form $\bm\eta$ turning a Lie system into an $\bm \eta$-Hamiltonian system is difficult, but it is simpler by using $k$-contact distributions. In fact, we show that many control systems, e.g. appearing in \cite{Ram_02}, can be studied via $k$-contact Lie systems by using a quite powerful method developed in Section \ref{Sec:NewMethods}. In particular, our ideas can also be used to understand Lie systems as contact Lie systems in a simple manner. We also consider the relation of $k$-contact Lie systems with other Lie systems admitting a compatible Vessiot--Guldberg Lie algebra of Hamiltonian vector fields relative to a $k$-symplectic or presymplectic form.

It is remarkable that the use of $k$-contact geometry techniques for the study of $k$-contact Lie systems show new uses of features appearing in $k$-contact geometry. In particular, $\bm\eta$-Hamiltonian $k$-functions  play a role in the study of quantities that are constants of the motion, generators of constants of motion of order $m$, or master symmetries for $k$-contact Lie systems. It is worth noting that it is the first time that generators of constants of motion and associated master symmetries are analysed for Lie systems (see \cite{Fer_93,CFR_13} for works on such notions on symplectic and Poisson geometry). As a byproduct, we obtain an extension of contact dissipated quantities to the $k$-contact realm \cite{Lop_24}.

As an application, we show several Lie systems that admit compatible $k$-contact manifolds turning them into $k$-contact Lie systems. These examples include extensions of classical equations like the Riccati equation, and novel systems such as the complex Schwarz equation, higher-order generalisations of Brockett systems, a front-wheel driven kinematic car, and so on. Theoretical results on $k$-contact Lie systems are used to analyse their generators of order $m$ of constants of motion, constants of motion, and our results that are applied to the examined examples. It is worth noting that the study of master symmetries and generalised constants of motion of order $m$ are introduced for the first time in the realm of Lie systems.

We prove that the diagonal prolongation of a $k$-contact Lie system on $N$ to $N^s$ is a $ks$-contact Lie system. This solves a problem appearing in the Poisson coalgebra method for contact manifolds, where it was observed that the diagonal prolongations of contact Lie systems  are not, in general, contact Lie systems as they may be defined on even-dimensional manifolds. The fact that the diagonal prolongation of a contact Lie system may not be a contact Lie system posed a problem to develop a contact Poisson coalgebra method to derive superposition rules and constants of motion for contact Lie systems, which was partially solved using Jacobi geometry. In the case of $k$-contact Lie systems, the use of Jacobi manifolds is neither needed nor available as $k$-contact manifolds are not in general related to Jacobi manifolds.

In the last part of the work, we briefly recall the theory of PDE Lie systems \cite{CL_11} and show that a Lie algebra of $\bm \eta$-Hamiltonian vector fields relative to a co-oriented $k$-contact manifold $(M,\bm\eta)$ allows us to construct PDE Lie systems with a compatible $k$-contact manifold, the so-called {\it $k$-contact PDE Lie systems}. Some of these PDE Lie systems can be understood as Hamilton--De Donder--Weyl equations in the $k$-contact realm, which is illustrated by examples considered in Section \ref{sec:PDE_Lie_systems} and Theorem \ref{thm:eta_Hamiltonian_k_vector_fields}.  It is worth recalling that there are not many applications of PDE Lie systems in the literature \cite{Ram_02,GL19,GL_25} and our new applications are specially interesting due to this fact. These new applications are concerned with Floquet theory, $\mathfrak{g}$-structures, Lax pairs, while PDE Lie systems on Lie groups, Wess--Zumin--Witten--Novikov system, were previously studied \cite{GL19,CGL_19}. 
We briefly describe the potential extensions of our techniques for $k$-contact Lie systems to $k$-contact PDE Lie systems.

The structure of the work goes as follows. Section \ref{Sec:LieSystems} presents the basics on Lie systems and related notions. $k$-Vector fields and their integral submanifolds are presented in Section \ref{Sec:k-vectorfields}, while $k$-contact geometry is briefly presented in Section \ref{sec:k-contact-geometry}. Then, a theory of $k$-contact Lie systems is derived in Section \ref{Sec:kcontactLiesystem}, while a theory on $t$-dependent constants of motion for $k$-contact Lie systems and other related notions is developed in Section \ref{Sec:Constants}. The relation of $k$-contact Lie systems and other geometric structures is given in Section \ref{Sec:kcontactAndOthers}. Methods to construct $k$-contact Lie systems from Lie systems are introduced in Section \ref{Sec:NewMethods}.  In Section \ref{Sec:Diagonalkcontact} we show that the diagonal prolongation to $N^s$ of a $k$-contact Lie system on $N$ is a $ks$-contact Lie system. Afterwards, applications of $k$-contact Lie systems appear in Section \ref{Sec:applications}. The theory of PDE Lie systems, some new and known examples, as well as PDE Lie systems admitting a Vessiot--Guldberg Lie algebra of Hamiltonian vector fields relative to a $k$-contact form are introduced in Section \ref{sec:PDE_Lie_systems}. Examples of these $k$-contact PDE Lie systems with potential applications are presented. Finally, the conclusions and outlook of our work are presented in Section \ref{Sec:ConclusionsOutlook}. 

\section{Basics on Lie systems}\label{Sec:LieSystems}
Let us establish some basic definitions about Lie systems  and other related concepts to be used in this work \cite{CL_11}. Natural numbers start at one. Hereafter, manifolds are assumed to be smooth, Hausdorff, connected, and finite-dimensional, unless otherwise stated. In particular, $M$ is an $m$-dimensional manifold an $U$ is always an open subset of a manifold. Our further considerations are mainly local at generic points, and the problem of establishing whether structures are globally defined will be generally skipped. We will also focus on smooth functions. Moreover, $\{e_1,\ldots,e_k\}$ is a basis for $\mathbb{R}^k$. 
 From now on, $\pi : \T M \rightarrow M$ stands for the canonical tangent bundle projection and we set $\pi_{2} : \mathbb{R} \times M \ni (t,x) \mapsto x \in M$. Given two subsets $A_1, A_2$ of a Lie algebra $\mathfrak{g}$, we represent by $[A_1,A_2]$ the vector space spanned by the Lie brackets between the elements of $A_1$ and $A_2$, respectively. We say that vector fields $X_1, \ldots, X_r$ are \textit{linearly independent at a generic point} if $\sum_{\alpha=1}^rf_\alpha X_\alpha =0
 $ on some $U$ only for functions $f_1=\ldots =f_r=0$ on $U$.

It is essential in the theory of Lie systems to describe $t$-dependent systems of first-order ordinary differential equations in normal form as $t$-dependent vector fields. 
A \textit{t-dependent vector field} is a map $X : \mathbb{R} \times M \ni (t,x) \mapsto X(t,x) \in \T M$ such that $\pi \circ X= \pi_{2}$.
The latter amounts to the fact that each $t$-dependent vector field $X$ on a manifold $M$ is equivalent to a family $\{X_{t}\}_{t \in \mathbb{R}}$ of vector fields on $M$. 

 We call {\it smallest Lie algebra} of a $t$-dependent vector field $X$ the smallest Lie algebra (in the sense of inclusion) containing all the vector fields $\{X_t\}_{t\in \mathbb{R}}$. We denote the smallest Lie algebra of $X$ by $V^X$. Every $t$-dependent vector field $X$ on $M$ gives rise to a generalised distribution
$
\mathcal{D}^X_x=\{Y_x\mid Y\in V^X\},$ for every $x\in M$. 
For simplicity, we simply call generalised distributions. Given a vector field $Z$ on $M$, we write $[Z,\mathcal{D}]$ for the distribution spanned by the Lie brackets of $Z$ with vector fields taking values in $\mathcal{D}$. Although this notation conveys a little abuse of notation, its meaning is clear and simplifies the presentation.

Given a  $t$-dependent vector field $X$ on $M$, its \textit{associated system} is the system of first-order ordinary differential equations
\begin{equation}
\label{eq:integral_curve}
    \frac{\d x}{\d t} = X(t,x)\,, 
\end{equation}
whose solution $x : \mathbb{R} \rightarrow M$ with $x(0)=x_0$ is the \textit{integral curve} of $X$ with initial condition $x_0$ at $t=0$. Every $t$-dependent vector field has an associated $t$-dependent system of differential equations in normal form \eqref{eq:integral_curve} and vice versa. It is convenient then to use $X$ to refer both to a $t$-dependent vector field and the $t$-dependent system of ordinary differential equations determining its integral curves.

In order to introduce Lie systems, consider 
\begin{equation}
    \frac{\d x}{\d t}=b_{1}(t)+b_{2}(t)x+b_{3}(t)x^2\,, \label{eq5}
\end{equation}
where $x \in \mathbb{R}, t\in \mathbb{R}$ and $b_{1}(t),b_{2}(t),b_{3}(t) $ are arbitrary $t$-dependent functions. Differential equations of this type are called {\it Riccati equations}\footnote{It is sometimes assumed that $b_{1}(t)b_{3}(t)\neq0$, but this condition is frequently ignored in Lie systems theory \cite{CL_11,LS_20}.}. The general solution of \eqref{eq5} can be written as
\begin{equation}
    x(t)=\frac{x_{(1)}(t)(x_{(3)}(t)-x_{(2)}(t)) - kx_{(2)}(t)(x_{(3)}(t)-x_{(1)}(t))}{x_{(3)}(t)-x_{(2)}(t) - k(x_{(3)}(t)-x_{(1)}(t))}\,, \label{solutin1}
\end{equation}
where $x_{(1)}(t),x_{(2)}(t),x_{(3)}(t)$ are three different particular solutions of \eqref{eq5} and $k  \in \mathbb{R}$.

    More generally, a $t$-dependent system of first-order ordinary differential equations on $M$ of the form \eqref{eq:integral_curve}
for a $t$-dependent vector field $X$ on $M$ is called a \textit{Lie system}, if it admits a \textit{superposition rule}, namely a $t$-independent map $\Phi : M^{\ell} \times M \rightarrow M$ of the form
$\label{eq7}
    x=\Phi(x_{(1)},\ldots,x_{(\ell)};k)\,,
$
such that every generic solution of \eqref{eq:integral_curve} can be written as
\begin{equation}\label{eq:Superposition_rule}
     x(t)=\Phi(x_{(1)}(t),\ldots,x_{(\ell)}(t);k)\,,
\end{equation}
where $x_{(1)}(t),\ldots,x_{(\ell)}(t)$ is any generic family of particular solutions of system  \eqref{eq:integral_curve} and $k \in M$.

There are some subtle problems related to the above definition and the meaning of being a ``generic solution'' or a ``generic family''. They are mostly technical  (see \cite{BM_10,CL_11}), therefore we will omit discussing them in detail. We will simply say that the domain of $\Phi$ is, in reality, some open subset of $M^\ell\times M$, and, for every generic family of particular solutions, one can recover locally almost every solution \cite{CL_11,LS_20}.

 The necessary and sufficient conditions to characterise Lie systems were provided by Sophus Lie in the \textit{Lie Theorem} \cite{Lie3,Lie2,Lie1}.
\begin{theorem}[Lie Theorem]\label{Lie_theorem}
A $t$-dependent system $X$ on a manifold $M$ admits a superposition rule \eqref{eq:Superposition_rule} if, and only if, $X$ can be written as
\begin{equation}
    X(t,x)=\sum^{r}_{\alpha=1}b_{\alpha}(t)X_{\alpha}(x), \label{vec2} \qquad t\in \R, \quad x \in M,
\end{equation}
where $X_{1}, \ldots, X_{r}$ is a family of vector fields on $M$ spanning an r-dimensional real Lie algebra of vector fields $V$, a so called \textit{Vessiot--Guldberg Lie algebra} of $X$ on $M$, and $b_{1}(t),\ldots,b_{r}(t)$ are arbitrary $t$-dependent functions.
\end{theorem}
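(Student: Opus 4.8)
To prove the Lie Theorem I would use the now-standard argument based on diagonal prolongations \cite{CGM_00,CL_11}, which reduces both implications to a statement about common first integrals of the prolongation of $X$ to a suitable power of $M$. Throughout write $m=\dim M$ and, for $s\in\N$, let $\widetilde{X}$ denote the diagonal prolongation of a $t$-dependent vector field $X$ to $M^{s}$, i.e. the $t$-dependent vector field on $M^{s}$ whose value at $(x_{(1)},\dots,x_{(s)})$ is $X(t,x_{(1)})\oplus\dots\oplus X(t,x_{(s)})\in\T_{x_{(1)}}M\oplus\dots\oplus\T_{x_{(s)}}M$; recall that $Y\mapsto\widetilde{Y}$ is an injective morphism of Lie algebras of vector fields, so it sends a finite-dimensional Lie algebra to an isomorphic one.

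For the implication $(\Leftarrow)$, assume $X=\sum_{\alpha=1}^{r}b_\alpha(t)X_\alpha$ with $V=\langle X_1,\dots,X_r\rangle$ finite-dimensional and pass to its diagonal prolongation to $M^{\ell+1}$. The prolonged Lie algebra $\widetilde{V}$ is again $r$-dimensional, so the generalised distribution $\widetilde{\D}$ it spans has rank at most $r$ everywhere; its generic rank is nondecreasing in $\ell$ and bounded, hence stabilises, and I would fix $\ell$ large enough that simultaneously (i) $(\ell+1)m-\rk\widetilde{\D}\ge m$ on a dense open set, and (ii) the evaluation map $V\to\bigoplus_{a=1}^{\ell}\T_{x_{(a)}}M$ is injective for generic $(x_{(1)},\dots,x_{(\ell)})$. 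By (ii), any $Z\in V$ vanishing at $\ell$ generic points vanishes identically, so $\widetilde{\D}$ meets the tangent spaces to the fibres of the projection $M^{\ell+1}\to M^{\ell}$ (forgetting the zeroth copy) only in $0$ at generic points. Since $\widetilde{\D}$ is spanned by a finite-dimensional Lie algebra it is involutive, so on the dense open set of regular points Frobenius' theorem provides $(\ell+1)m-\rk\widetilde{\D}$ functionally independent common first integrals of $\widetilde{\D}$, hence of every $\widetilde{X}_t$; the transversality just noted lets me pick $m$ of them, $F_1,\dots,F_m$, with $(\partial F_i/\partial x_{(0)}^{j})$ nonsingular. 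The implicit function theorem then solves $F_i=k_i$ locally as $x_{(0)}=\Phi(x_{(1)},\dots,x_{(\ell)};k)$, and differentiating $F_i(x_{(0)}(t),\dots,x_{(\ell)}(t))=k_i$ along a tuple of solutions, together with $\widetilde{X}_tF_i=0$ and nonsingularity of $(\partial F_i/\partial x_{(0)}^{j})$, shows that $t\mapsto\Phi(x_{(1)}(t),\dots,x_{(\ell)}(t);k)$ is itself a solution; as $k\mapsto\Phi(x_{(1)}(t_0),\dots,x_{(\ell)}(t_0);k)$ is a local diffeomorphism, these curves exhaust the generic solutions and $\Phi$ is a superposition rule \eqref{eq:Superposition_rule}.

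For $(\Rightarrow)$, suppose $X$ admits a superposition rule $x=\Phi(x_{(1)},\dots,x_{(\ell)};k)$. Shrinking its domain, $\Phi$ may be assumed generically a local diffeomorphism in $k$, so the relation $x_{(0)}=\Phi(x_{(1)},\dots,x_{(\ell)};k)$ inverts to $k=F(x_{(0)},x_{(1)},\dots,x_{(\ell)})$ with $F=(F_1,\dots,F_m)$ submersive and $(\partial F_i/\partial x_{(0)}^{j})$ nonsingular on an open subset of $M^{\ell+1}$. If $x_{(1)}(t),\dots,x_{(\ell)}(t)$ are solutions and $x_{(0)}(t)$ is any solution, the superposition rule gives $x_{(0)}(t)=\Phi(x_{(1)}(t),\dots,x_{(\ell)}(t);k)$ for a $k$ independent of $t$, so $F$ is constant along every integral curve of $\widetilde{X}_t$, and since such curves foliate an open set, every $\widetilde{X}_t$ is tangent there to the foliation $\mathcal{F}$ by the level sets of $F$. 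The crux is then that $\mathfrak{g}_{\mathcal{F}}:=\{Z\in\vf(M):\widetilde{Z}\text{ is tangent to }\mathcal{F}\}$ is at once a Lie subalgebra of $\vf(M)$ — because $\widetilde{[Z_1,Z_2]}=[\widetilde{Z_1},\widetilde{Z_2}]$ and tangency to a foliation is preserved by Lie brackets — and finite-dimensional: freezing generic $c_1,\dots,c_\ell\in M$ and writing the equations $\widetilde{Z}F_i=0$ at $(y,c_1,\dots,c_\ell)$, the nonsingular matrix $(\partial F_i/\partial x_{(0)}^{j})(y,c)$ can be inverted to express every component of $Z$ at $y$, hence $Z$ itself, as a fixed linear combination (with coefficients the numbers $Z^{j}(c_a)$) of $\ell m$ vector fields on $M$ that do not depend on $Z$; thus $\dim\mathfrak{g}_{\mathcal{F}}\le\ell m$. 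Since $X_t\in\mathfrak{g}_{\mathcal{F}}$ for all $t$, taking $V:=V^X\subseteq\mathfrak{g}_{\mathcal{F}}$ (a finite-dimensional Vessiot--Guldberg Lie algebra) and any basis $X_1,\dots,X_r$ of it writes $X$ in the form \eqref{vec2}.

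The essentially formal ingredients above — Frobenius, the implicit function theorem, the linear-algebra inversion, the fact that prolongation is an injective Lie algebra morphism — are routine. The step I expect to be delicate is the management of the word ``generic'': making precise the dense open subsets on which the prolonged distributions have constant rank, on which the evaluation maps of $V$ are injective, and on which $\Phi$ is invertible in its last argument with nonsingular differential. These are exactly the technicalities the paper has agreed to suppress, and a fully rigorous treatment spends most of its effort there rather than on the computations sketched here.
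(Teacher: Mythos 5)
Your sketch is essentially correct, but note that the paper itself offers no proof of this statement: the Lie Theorem is quoted as a classical result with references to Lie's original work \cite{Lie1,Lie2,Lie3} and to the modern treatments \cite{CL_11,CGM_07}, so there is no in-paper argument to compare against line by line. What you have written is precisely the standard geometric proof from those modern references, and it is the same machinery the paper later recalls when discussing superposition rules: the diagonal prolongation to $M^{\ell+1}$, the choice of $\ell$ so that the prolongations of a basis of $V$ are linearly independent at a generic point (Proposition \ref{proposition}), Frobenius on the involutive generalised distribution spanned by $\widetilde{V}$, and the implicit-function-theorem inversion of $m$ common first integrals with nonsingular $(\partial F_i/\partial x_{(0)}^{j})$; the converse via the finite-dimensional Lie algebra $\mathfrak{g}_{\mathcal{F}}$ of vector fields whose prolongations are tangent to the level foliation of $F$, bounded in dimension by $\ell m$ through the linear-algebra inversion, is likewise the standard argument. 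Two small points you gloss over: the claim that the stabilised generic rank forces injectivity of the evaluation map needs the one-line observation that a nonzero element of $V$ vanishing at generic additional points vanishes identically, and in the converse direction the decomposition $X_t=\sum_\alpha b_\alpha(t)X_\alpha$ only yields coefficients $b_\alpha(t)$ with the regularity inherited from $X$, which is usually left implicit. Both are minor, and the genericity issues you flag at the end are exactly those the paper declares it will not treat in detail.
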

Lie proved that the existence of a superposition rule depending on $\ell$ particular solutions implies the existence of a Vessiot--Guldberg Lie algebra $V$ with a basis $X_{1},\ldots, X_{r}$ such that $r\leq \dim M \cdot \ell $, and, conversely, given a decomposition \eqref{vec2}, there exists a superposition rule depending on $\ell$ particular solutions with $r\leq \dim M \cdot \ell$. The inequality $r\leq \ell\cdot \dim M$ is referred to as \textit{Lie's condition} \cite{CL_11}.
 
A Riccati equation describes the integral curves of a $t$-dependent vector field on $\mathbb{R}$ of the form 
\begin{equation}
    X(t,x)=(b_{1}(t) + b_{2}(t)x + b_{3}(t)x^{2})\frac{\partial}{\partial x}\,. \label{vec4}
\end{equation}
One can present $X$ as a linear combination of three vector fields on $\mathbb{R}$ given by
\begin{equation}
X_{1}=\frac{\partial}{\partial x}, \qquad X_{2}=x\frac{\partial}{\partial x}, \qquad X_{3}=x^{2}\frac{\partial}{\partial x}\,,  \label{vec5}
\end{equation}
which satisfy the commutation relations
\begin{equation}
\label{eq:comm_rel_RIcc}
    [X_{1}, X_{2}] = X_{1}\,, \qquad [X_{1}, X_{3}] = 2X_{2}\,, \qquad [X_{2}, X_{3}] = X_{3}\,.
\end{equation}
By the Lie Theorem, it follows that Riccati equations admit a superposition rule. Conversely, one can also say that the existence of a superposition rule \eqref{solutin1} implies that \eqref{vec4} is such that $\{X_t\}_{t\in \mathbb{R}}$ are included in some Lie algebra of dimension three or less. 
In view of \eqref{eq:comm_rel_RIcc}, the vector fields \eqref{vec5} span a Lie algebra isomorphic to $\mathfrak{sl}(2,\mathbb{R})$, which is three-dimensional \cite{CR_02}. Since the superposition rule \eqref{solutin1} depends on three particular solutions, Riccati equations satisfy Lie's condition.

Let us recall the analysis of Lie systems via contact geometry, which gives rise to contact Lie systems \cite{LR_23}. 

\begin{definition}
    A {\it contact Lie system} is a triple $(M,\eta,X)$, where $\eta$ is a contact form on $M$ and $X$ is a Lie system on $M$ admitting a Vessiot--Guldberg Lie algebra of contact Hamiltonian vector fields relative to $\eta$. A {\it contact Lie system} is called \textit{conservative} or of \textit{Liouville type} if the Reeb vector field of $(M,\eta)$ is a Lie symmetry of $X$.
\qeddiamond\end{definition}

The term `conservative' or of `Liouville type' is coined because such contact Lie systems preserve a volume form like in the Liouville theorem in symplectic geometry \cite{LR_23}.
The contact Hamiltonian function of a contact Hamiltonian vector field is a first integral of the Reeb vector field if, and only if, the contact Hamiltonian vector field commutes with the Reeb vector field. Hence, a contact Lie system of Liouville type amounts to a Lie system $X$ on $M$ admitting a smallest Lie algebra $V^X$ of contact Hamiltonian vector fields relative to a contact form on $M$ that commute with the Reeb vector field, $R$, of the  contact form, namely $R$ is a Lie symmetry of $V^X$.

In view of Lie Theorem, a Lie system $X$ can be considered as a curve in $V^X$ parametrised by $t$. Every contact form on a manifold $M$ gives rise to an isomorphism mapping each contact Hamiltonian vector field on $M$ to a contact Hamiltonian function on $M$ and vice versa (see Corollary \ref{Cor:MorLieBrackett} for $k=1$). Therefore, the Lie algebra of contact Hamiltonian vector fields $V^X$ gives rise to an isomorphic Lie algebra of functions $\mathfrak{W}$ relative to the bracket on $\Cinfty(M)$ induced by $\eta$. Hence, $X$ also defines a curve in $\mathfrak{W}$ parametrised by $t$. This suggests us the following definition.

\begin{definition} A {\it contact Lie--Hamiltonian system} is a triple $(M,\eta,h:\mathbb{R}\times M\rightarrow \mathbb{R})$, where $(M,\eta)$ is a co-oriented contact manifold and $h$ gives rise to a $t$-dependent family of functions $h_t:x\in M
\mapsto h(t,x)\in \mathbb{R}$, with $t\in \mathbb{R}$, contained in a finite-dimensional Lie algebra of functions relative to the Lie bracket in $\Cinfty_\eta(M,\mathbb{R})$ induced by $(M,\eta)$: a {\it contact Lie--Hamilton algebra}. The function $h$ is called a {\it contact Lie--Hamiltonian} relative to $\eta$. 
\qeddiamond\end{definition}

It is worth noting that every contact Lie system related to $\eta$ gives rise a unique contact Lie--Hamiltonian system related to the same $\eta$ ` and vice versa.

\section{\texorpdfstring{$k$}--Vector fields and its integral sections}\label{Sec:k-vectorfields}

$k$-Vector fields are of great use in the geometric study of systems of partial differential equations \cite{LSV_15,RRSV_11}. Consider the Whitney sum
$\label{eq:Whitney-sum}
    \boplus^k_M\T M := \T M\oplus_M\overset{(k)}{\dotsb}\oplus_M \T M\,,
$ with the natural projections\footnote{From now on, the subindex in  $\oplus_M$ in the Whitney sum will be skipped.}
$
    \tau^\alpha\colon\boplus^k\T M\to\T M\,,\; \tau^k_M\colon\boplus^k\T M\to M\,, \; \alpha=1,\ldots,k\,,
$ where $\tau^\alpha$ denotes the projection onto the $\alpha$-th component of the Whitney sum $\boplus^k_M\T M $, and $\tau^k_M$ is the projection onto the base manifold $M$.
\vskip -0.1cm
\noindent\begin{minipage}{0.75\linewidth}
    A \textit{k-vector field} on \( M \) is a section \( X: M \to \bigoplus^k TM \) of the projection \( \tau^k_M \). 
    The space of \( k \)-vector fields on \( M \) is denoted by \( \mathfrak{X}^k(M) \). Taking into account the diagram aside, a \( k \)-vector field \( \mathbf{X} \in \mathfrak{X}^k(M) \) amounts to a family of vector fields \( X_1, \dots, X_k \in \mathfrak{X}(M) \) given by 
\( X_\alpha = \tau^\alpha \circ \mathbf{X} \) with
\end{minipage}
\hfill
\begin{minipage}{0.29\linewidth}
    \vskip -0.2cm
    \[
    \begin{tikzcd}[every cell/.append style={scale=1.9}]
        & \bigoplus^k TM \arrow[d, "\tau^\alpha"] \\
        M \arrow[ur, "X"] \arrow[r, "X_\alpha"] & TM
    \end{tikzcd}
    \]
\end{minipage}

\noindent
 \( \alpha = 1, \dots, k \). With this in mind, one can denote \( \mathbf{X} = (X_1, \dots, X_k) \). 
A \( k \)-vector field \( \mathbf{X} \) induces a decomposable contravariant totally skew-symmetric tensor field, \( X_1 \wedge \dots \wedge X_k \), which is a decomposable section of the bundle 
\( \bigsqcup_{x \in M} \Lambda^k T_x M = \Lambda^k TM \to M \), where $\bigsqcup_{x\in M}$ is the disjoint sum over $x\in M$.

    Given a mapping $\phi\colon U\subset\R^k\to M$, its {\it first prolongation} to $\bigoplus^k\T M$ is the map $\phi'\colon U\subset\R^k\to\bigoplus^k\T M$ 
    defined as follows
    $$
        \phi'(t) = \left( \phi(t); \T_t\phi\left( \parder{}{t^1}\bigg\vert_t \right),\dotsc,\T_t\phi\left( \parder{}{t^k}\bigg\vert_t \right) \right) := (\phi(t); \phi'_{1}(t),\ldots,\phi'_{k}(t))\,, \qquad t\in\R^k\,,
    $$
    where $t=(t^1,\dotsc,t^k)$, and $\{t^1,\ldots,t^k\}$ are the canonical coordinates on $\R^k$.

As for integral curves of vector fields, one can define integral sections of a $k$-vector field as follows. Let $\bfX = (X_1,\dotsc,X_k)\in\X^k(M)$ be a $k$-vector field. An {\it integral section} of $\bfX$ is a map $\phi\colon U\subset\R^k\to M$ such that $\phi' = \bfX\circ\phi\,$, namely $\T\phi \left(\parder{}{t^\alpha}\right) = X_\alpha\circ\phi$ for $\alpha=1,\ldots,k$. A $k$-vector field $\bfX\in\X^k(M)$ is said to be {\it integrable} if every point of $M$ lies in the image of an integral section of $\bfX$.

Let $\bfX = (X_1,\ldots, X_k)$ be a $k$-vector field on $M$ with local expression $ X_\alpha = \sum_{i=1}^{m}X_\alpha^i\parder{}{x^i}\,$ for $\alpha=1,\ldots ,k$.
Then, $\phi\colon U\subset\R^k\to M$, with local expression $\phi(t) = (\phi^i(t))$, is an integral section of $\bfX$ if, and only if, its components satisfy the following system of PDEs
\begin{equation}\label{eq:SysPDEs}
	\parder{\phi^i}{t^\alpha} = X_\alpha^i\circ\phi\,,\qquad i=1,\ldots,m\,,\qquad \alpha=1,\ldots,k\,.
\end{equation}

Then, $\bfX$ is integrable if, and only if, $[X_\alpha,X_\beta] = 0$ for $\alpha,\beta=1,\ldots,k$. These are necessary and sufficient conditions for the integrability of the system of PDEs \eqref{eq:SysPDEs} (see \cite{Lee_12,Ol_93} for details).

Every $k$-vector field $\bfX = (X_1,\dotsc,X_k)$ on $M$ defines a distribution $\mathcal{D}^\bfX\subset\T M$ given by $\mathcal{D}^\bfX_x = \mathcal{D}^\bfX \cap\T_xM = \langle X_1(x),\dotsc,X_k(x)\rangle$. However, the notion of an integral submanifold of the $k$-vector field $\bfX$ is stronger than the notion of an integral section of the distribution $\mathcal{D}^\bfX$. The distribution $\mathcal{D}^\bfX$ is integrable if, and only if, $[X_\alpha,X_\beta] = \sum_{\gamma=1}^{k}f_{\alpha\beta}^\gamma X_\gamma$, with $\alpha,\beta=1,\ldots,k$ for certain functions $f_{\alpha\beta}^\gamma\in \Cinfty(M)$ with $\alpha,\beta,\gamma=1,\ldots,k$, and $\mathcal{D}^{\bfX}$ is invariant relative to the one-parameter group of diffeomorphisms of any vector field taking values in $\mathcal{D}^{\bfX}$ \cite{Lav_18,Ste_74,Sus_73}. On the other hand, the $k$-vector field $\bm X$ is integrable if, and only if, $X_1,\ldots, X_k$ commute with each other, which is a stronger condition.

 \section{\texorpdfstring{$k$}{k}-contact geometry}
 \label{sec:k-contact-geometry}

 This section summarises previous known results about $k$-contact manifolds \cite{Riv_21} and a new approach, via distributions, devised in the preprint \cite{LRS_25}. We have included just some proofs of our previous results in \cite{LRS_25} for completeness. 

Let us start by the classical definition of a $k$-contact form on an open subset \cite{Riv_21}, whose motivation was to generalise the concept of a contact form. 
 
\begin{definition}\label{dfn:k-contact-manifold}
    A \textit{$k$-contact form on an open subset $U\subset M$} is a differential one-form on $U$ taking values in $\mathbb{R}^k$, let us say $\bm\eta\in\Omega^1(U,\mathbb{R}^k)$, such that
    \begin{enumerate}[(1)]
        \item $\ker \bm\eta\subset\T U$ is a regular non-zero distribution of corank $k$,
        \item $\ker \d\bm\eta\subset\T U$ is a regular distribution of rank $k$,
        \item $\ker \bm\eta\cap\ker \d\bm\eta  = 0$.
    \end{enumerate}
    If the $k$-contact form $\bm \eta$ is defined on $M$, the pair $(M,\bm\eta)$ is called a \textit{co-oriented $k$-contact manifold} and $\ker\d\bm\eta$ is called the {\it Reeb distribution} of $(M,\bm\eta)$. 
    \qeddiamond\end{definition}
    Nowadays, the only approach to $k$-contact Hamiltonian dynamics is constructed exclusively for co-oriented $k$-contact manifolds \cite{Riv_21,LRS_25}. Reeb vector fields for $k$-contact forms are the natural analogue of Reeb vector fields for contact forms, which play a relevant role in contact geometry and the study of periodic orbits of Hamiltonian systems \cite{CH_13,Pas_12}. 

\begin{theorem}[Reeb vector fields \cite{GGMRR_20}]\label{thm:k-contact-Reeb}
    Let $(M,\bm\eta = \sum_{\alpha=1}^k\eta^\alpha\otimes e_\alpha)$ be a co-oriented $k$-contact manifold. There exists a unique family of vector fields $R_1,\dotsc, R_k\in\X(M)$ such that
    \begin{equation*}\label{eq:k-contact-Reeb}
    \inn{R_\alpha}\eta^\beta = \delta_\alpha^\beta, \qquad \inn{R_\alpha}\d\eta^\beta = 0\,,    
    \end{equation*}
    for $\alpha,\beta = 1,\dotsc,k$. The vector fields $R_1,\dotsc,R_k$ commute between themselves, i.e.
    $
        [R_\alpha,R_\beta] = 0$ for $\alpha,\beta = 1,\dotsc,k\,,
    $  and  $\ker \d\bm\eta = \langle R_1,\dotsc,R_k \rangle$. 
\end{theorem}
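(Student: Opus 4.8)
The plan is to turn the three conditions of Definition~\ref{dfn:k-contact-manifold} into a splitting of the tangent bundle, invert $\bm\eta$ on the Reeb distribution to produce the $R_\alpha$, and then read off both the spanning statement and the commutation relations from elementary Cartan calculus. Concretely, writing $m=\dim M$, conditions (1) and (2) say that $\ker\bm\eta$ and $\ker\d\bm\eta=\bigcap_{\alpha=1}^k\ker\d\eta^\alpha$ are \emph{regular} distributions of constant ranks $m-k$ and $k$, while condition (3) says $\ker\bm\eta\cap\ker\d\bm\eta=0$. Since $(m-k)+k=m$, a pointwise dimension count that globalises thanks to regularity yields the Whitney-sum decomposition $\T M=\ker\bm\eta\oplus\ker\d\bm\eta$, so every tangent vector has a unique decomposition along these two summands.

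Next I would consider the vector-bundle morphism $\ker\d\bm\eta\to M\times\R^k$, $v\mapsto\bm\eta(v)$. Its fibrewise kernel is $\ker\bm\eta\cap\ker\d\bm\eta=0$ by (3), so it is a fibrewise injective morphism between bundles of the same rank $k$, hence a vector-bundle isomorphism. Defining $R_\alpha$ as the preimage of the constant section $e_\alpha$ gives smooth vector fields with $R_\alpha\in\ker\d\bm\eta$, equivalently $\inn{R_\alpha}\d\eta^\beta=0$ for all $\beta$, and with $\inn{R_\alpha}\eta^\beta=\eta^\beta(R_\alpha)=\delta_\alpha^\beta$. Uniqueness is immediate: any $R'_\alpha$ with $\inn{R'_\alpha}\d\eta^\beta=0$ for all $\beta$ lies in $\ker\d\bm\eta$, and $\bm\eta(R'_\alpha)=e_\alpha=\bm\eta(R_\alpha)$ then forces $R'_\alpha=R_\alpha$ by injectivity. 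From $\eta^\beta(R_\alpha)=\delta_\alpha^\beta$ the fields $R_1,\dots,R_k$ are pointwise linearly independent, so, being $k$ sections of the rank-$k$ distribution $\ker\d\bm\eta$, they span it: $\ker\d\bm\eta=\langle R_1,\dots,R_k\rangle$.

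Finally, for $[R_\alpha,R_\beta]=0$ I would show that this bracket lies in both summands of the decomposition. On the Reeb side, Cartan's magic formula gives $\Lie_{R_\alpha}\d\eta^\gamma=\d\,\inn{R_\alpha}\d\eta^\gamma+\inn{R_\alpha}\d\d\eta^\gamma=0$, hence $\inn{[R_\alpha,R_\beta]}\d\eta^\gamma=\Lie_{R_\alpha}\inn{R_\beta}\d\eta^\gamma-\inn{R_\beta}\Lie_{R_\alpha}\d\eta^\gamma=0$ for every $\gamma$, so $[R_\alpha,R_\beta]\in\ker\d\bm\eta$. On the $\ker\bm\eta$ side, from $\d\eta^\gamma(R_\alpha,R_\beta)=R_\alpha(\eta^\gamma(R_\beta))-R_\beta(\eta^\gamma(R_\alpha))-\eta^\gamma([R_\alpha,R_\beta])$ together with $\d\eta^\gamma(R_\alpha,R_\beta)=(\inn{R_\alpha}\d\eta^\gamma)(R_\beta)=0$ and the constancy of $\eta^\gamma(R_\beta)$ and $\eta^\gamma(R_\alpha)$, one gets $\eta^\gamma([R_\alpha,R_\beta])=0$ for every $\gamma$, so $[R_\alpha,R_\beta]\in\ker\bm\eta$. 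Condition (3) then forces $[R_\alpha,R_\beta]=0$.

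I do not expect a serious obstacle: the only point requiring care is the very first step, where one must invoke regularity (constant rank) of \emph{both} $\ker\bm\eta$ and $\ker\d\bm\eta$ so that their intersection and sum are genuine subbundles and the fibrewise dimension count upgrades to a bundle decomposition; everything afterwards is linear algebra together with the standard identities relating $\d$, $\Lie$ and $\inn{\cdot}$.
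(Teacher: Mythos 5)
Your argument is correct. Note that the paper itself gives no proof of this statement: it is quoted from the cited reference \cite{GGMRR_20}, and the standard proof there follows essentially the route you take — conditions (1)--(3) give the splitting $\T M=\ker\bm\eta\oplus\ker\d\bm\eta$, the fibrewise-injective morphism $\ker\d\bm\eta\ni v\mapsto\bm\eta(v)\in\R^k$ between rank-$k$ bundles is inverted to define $R_\alpha$ (with uniqueness and the spanning of $\ker\d\bm\eta$ coming from the same injectivity and the relations $\eta^\beta(R_\alpha)=\delta^\beta_\alpha$), and the commutation is obtained by showing $[R_\alpha,R_\beta]$ lies in both summands via the Cartan identities, exactly as you do. The one point that deserves the emphasis you already give it is the use of regularity of both distributions so that the pointwise count $(m-k)+k=m$ together with condition (3) genuinely produces a subbundle decomposition and a vector-bundle isomorphism (hence smooth $R_\alpha$); with that in place, the rest is routine and your computation of $\eta^\gamma([R_\alpha,R_\beta])=0$ and $\iota_{[R_\alpha,R_\beta]}\d\eta^\gamma=0$ is sound.
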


As shown later, it is generally difficult to find a 
$k$-contact forms that transform the vector fields of a Vessiot–Guldberg Lie algebra for a Lie system into 
$k$-contact Hamiltonian vector fields, thereby enabling the application of various methods to study these vector fields (cf. \cite{LRS_25,Riv_21}). Instead, we will devise new methods to obtain such $k$-contact forms via $k$-contact distributions to be defined next (see \cite{LRS_25} for details). It is worth noting that $k$-contact distribution play the role of contact distributions in contact geometry, and they are easy to employ, practically and theoretically, to study $k$-contact manifolds \cite{LRS_25}. In particular, we will show in this paper how $k$-contact distributions are simpler to use to study dynamical systems via $k$-contact geometry.

\begin{definition}\label{def:k-contact-distribution}
    A {\it $k$-contact distribution} on $M$ is a distribution $\mathcal{D}\subset\T M$ such that, for each point $x\in M$, there is an open neighbourhood $U\ni x$ and a $k$-contact form $\bm\eta$ on $U$ such that $\restr{\mathcal{D}}{U} = \ker\bm\eta$. We say that $(M,\mathcal{D})$ is a {\it $k$-contact manifold}.
\qeddiamond\end{definition}

Note that  $k$-contact distributions are regular because the kernel of $k$-contact forms is regular too.  
Let us now develop the notions of $\bm\eta$-Hamiltonian vector fields and $\bm\eta$-Hamiltonian $k$-functions, which will be very useful in the description of Lie systems in Section \ref{Sec:applications}. 

The contraction of a vector field $X$ with a differential one-form taking values in $\mathbb{R}^k$, e.g. $\bm \eta=\sum_{\alpha=1}^k
\eta^\alpha\otimes e_\alpha$,  is defined component-wise, namely $\iota_X\bm \eta=\sum_{\alpha=1}^k\iota_X\eta^\alpha\otimes e_\alpha$. The Lie bracket or contraction of $X$ with a differential $s$-form taking values in $\mathbb{R}^k$ is defined similarly. In order to characterise $k$-distributions without determining a $k$-contact form, which can be challenging, we use Lie symmetries of distributions to be defined below. Moreover, the concept gives a natural manner to generalise to $k$-contact geometry the theory of contact Hamiltonian vector fields. 

\begin{definition}
    A {\it Lie symmetry} of a distribution $\mathcal{D}$ on $M$ is a vector field $X$ on $M$ such that $[X,\mathcal{D}]\subset \mathcal{D}$, where $[X,\mathcal{D}]$ stands for the distribution generated by the Lie brackets of $X$ with vector fields taking values in $\mathcal{D}$.
\label{def:etaHamiltonianvectorfield}A {\it $k$-contact vector field} relative to a $k$-contact manifold $(M,\mathcal{D})$ is a Lie symmetry $X$ of $\mathcal{D}$. If additionally $\mathcal{D}=\ker \bm \eta$ for a $k$-contact form $\bm \eta$, then $X$ is called an {\it $\bm\eta$-Hamiltonian vector field} and $-\inn{X}{\bm \eta}$ is its {\it $\bm\eta$-Hamiltonian $k$-function}. 
\qeddiamond\end{definition}

Denote by $\X_\mathcal{D}(M)$ the space of $k$-contact  vector fields on $M$ relative to $(M,\mathcal{D})$ and we write $\X_{\bm\eta}(M)$ for the space of $\bm\eta$-Hamiltonian vector fields relative to a co-oriented $k$-contact manifold $(M,\bm\eta)$, respectively. Meanwhile, $\Cinfty_{\bm\eta}(M,\mathbb{R}^k)$ stands for the space of $\bm\eta$-Hamiltonian $k$-functions relative to a $k$-contact form $\bm\eta$. 

The following proposition explains why $-\inn{X}{\bm \eta}$ plays the role of an $\bm\eta$-Hamiltonian $k$-function for an $\bm\eta$-Hamiltonian vector field $X$. 

\begin{proposition}
\label{prop:eta-Hamiltonian-vector-field}
A vector field $X$ on $M$ is an $\bm\eta$-Hamiltonian vector field relative to $(M,\mathcal{D}=\ker\bm\eta)$ if, and only if, 
\begin{equation}\label{eq:ConHam}
    \inn{X}\eta^\alpha=-h^\alpha\,,\qquad \inn{X}\d\eta^\alpha=\d h^\alpha-\sum_{\beta=1}^k(R_\beta h^\alpha)\eta^\beta\,,\qquad \alpha=1,\ldots,k\,,
\end{equation}
for some $k$-function ${\bm h} = \sum_{\alpha=1}^k h^\alpha  e_\alpha\in\Cinfty(M,\R^k)$.
\end{proposition}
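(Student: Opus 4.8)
The plan is to prove both implications by unwinding the definition of an $\bm\eta$-Hamiltonian vector field, namely that $X$ is a Lie symmetry of $\mathcal{D}=\ker\bm\eta$, and translating the condition $[X,\mathcal{D}]\subset\mathcal{D}$ into the system \eqref{eq:ConHam}. The key observation is that a distribution $\mathcal{D}=\ker\bm\eta$ is preserved by the flow of $X$ precisely when $\Lie_X\eta^\alpha$ annihilates $\mathcal{D}$ for every $\alpha=1,\ldots,k$; that is, $\Lie_X\eta^\alpha\in\langle\eta^1,\ldots,\eta^k\rangle$ at each point. I would state this as a preliminary step, justifying it by the fact that $[X,Y]$ takes values in $\ker\bm\eta$ for all $Y$ with values in $\ker\bm\eta$ if, and only if, $(\Lie_X\eta^\alpha)(Y)=-\eta^\alpha([X,Y])+X(\eta^\alpha(Y))=-\eta^\alpha([X,Y])=0$ for such $Y$, using $\eta^\alpha(Y)=0$.

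Next I would use the Cartan magic formula $\Lie_X\eta^\alpha = \inn{X}\d\eta^\alpha + \d(\inn{X}\eta^\alpha)$. Set $h^\alpha := -\inn{X}\eta^\alpha\in\Cinfty(M,\R)$, so that $\bm h = \sum_\alpha h^\alpha e_\alpha = -\inn{X}\bm\eta$ is by definition the candidate $\bm\eta$-Hamiltonian $k$-function; this already gives the first equation in \eqref{eq:ConHam}. Then $\Lie_X\eta^\alpha = \inn{X}\d\eta^\alpha - \d h^\alpha$. The Lie-symmetry condition says precisely that $\Lie_X\eta^\alpha = \sum_{\beta=1}^k g^\alpha_\beta\,\eta^\beta$ for some functions $g^\alpha_\beta\in\Cinfty(M)$. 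To identify these functions, I would contract with the Reeb vector fields $R_\gamma$ from Theorem \ref{thm:k-contact-Reeb}: since $\inn{R_\gamma}\d\eta^\alpha=0$ and $\inn{R_\gamma}\eta^\beta=\delta^\beta_\gamma$, contracting $\Lie_X\eta^\alpha=\inn{X}\d\eta^\alpha-\d h^\alpha$ with $R_\gamma$ gives $g^\alpha_\gamma = -R_\gamma h^\alpha$. Substituting back yields $\inn{X}\d\eta^\alpha - \d h^\alpha = -\sum_{\beta}(R_\beta h^\alpha)\eta^\beta$, which rearranges to the second equation in \eqref{eq:ConHam}. This proves the forward implication.

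For the converse, I would assume \eqref{eq:ConHam} holds for some $\bm h\in\Cinfty(M,\R^k)$. Running the same computation backwards, Cartan's formula gives $\Lie_X\eta^\alpha = \inn{X}\d\eta^\alpha + \d(\inn{X}\eta^\alpha) = \bigl(\d h^\alpha - \sum_\beta(R_\beta h^\alpha)\eta^\beta\bigr) + \d(-h^\alpha) = -\sum_{\beta}(R_\beta h^\alpha)\eta^\beta$, which lies in $\langle\eta^1,\ldots,\eta^k\rangle$. By the preliminary step this means $[X,\mathcal{D}]\subset\mathcal{D}$, so $X$ is a Lie symmetry of $\mathcal{D}=\ker\bm\eta$, i.e.\ an $\bm\eta$-Hamiltonian vector field; moreover the first equation of \eqref{eq:ConHam} shows $\bm h=-\inn{X}\bm\eta$ is its $\bm\eta$-Hamiltonian $k$-function.

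I do not expect a serious obstacle here, as the argument is essentially a bookkeeping exercise with Cartan's formula and the defining properties of the Reeb vector fields. The one point that requires a little care is the preliminary equivalence between the distributional condition $[X,\ker\bm\eta]\subset\ker\bm\eta$ and the pointwise condition $\Lie_X\eta^\alpha\in\langle\eta^1,\ldots,\eta^k\rangle$: one must check that the existence of suitable local vector fields with values in $\ker\bm\eta$ spanning $\ker\bm\eta$ pointwise (which holds because $\ker\bm\eta$ is a regular distribution) lets one pass from "$(\Lie_X\eta^\alpha)(Y)=0$ for all sections $Y$ of $\ker\bm\eta$" to "$\Lie_X\eta^\alpha$ annihilates $\ker\bm\eta$ at every point", and conversely that this annihilation condition is equivalent to $\Lie_X\eta^\alpha$ being a combination of the $\eta^\beta$ with smooth coefficients—here regularity of $\ker\bm\eta$ (corank $k$) and of the coframe $\{\eta^1,\ldots,\eta^k\}$ on $\ker\d\bm\eta$-complements is what makes the coefficients $g^\alpha_\beta$ smooth. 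Since all distributions in sight are regular by hypothesis, this is routine, and the contraction with the $R_\gamma$ makes the coefficients explicit anyway.
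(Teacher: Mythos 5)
Your proof is correct and follows essentially the same route as the paper's: translate the Lie-symmetry condition into $\Lie_X\eta^\alpha=\sum_\beta f^\alpha_\beta\,\eta^\beta$, apply Cartan's formula with $h^\alpha=-\iota_X\eta^\alpha$, and contract with the Reeb vector fields to identify $f^\alpha_\beta=-R_\beta h^\alpha$. The only difference is presentational: you spell out the preliminary equivalence $[X,\ker\bm\eta]\subset\ker\bm\eta\iff\Lie_X\eta^\alpha\in\langle\eta^1,\ldots,\eta^k\rangle$ and the converse direction, both of which the paper treats as immediate.
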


Similarly, $k$-contact Hamiltonian $k$-vector fields can be defined as follows. It is worth noting that  $\sum_{\beta=1}^k(R_\beta h^
\alpha)\eta^\beta$, for every $\alpha$, is a linear combination of the one-forms$\eta^1,\ldots, \eta^k$ with coefficients given by the functions $R_\beta h^\alpha$.

\begin{definition} Given  $h\in \Cinfty(M)$ on a co-oriented $k$-contact manifold $(M,\bm\eta)$, called a \textit{$\bm \eta$-Hamiltonian function}, a $k$-vector field  $\bfX^c_{h} = (X_\alpha)\in\X^k(M)$ satisfying the equations
\begin{equation}\label{eq:contact-Ham-equations-trivial-1}
    \sum_{\alpha=1}^k\inn{X_\alpha}\d\eta^\alpha = \d h - \sum_{\alpha=1}^k(R_\alpha h)\eta^\alpha\,,\qquad \sum_{\alpha=1}^k\inn{X_\alpha}\eta^\alpha = -h\,,
\end{equation}
 is  called an \textit{$\bm \eta$-Hamiltonian $k$-vector field}. Equations \eqref{eq:contact-Ham-equations-trivial-1} can be rewritten as
\begin{equation}\label{eq:contact-Ham-equations-trivial-2}
    \sum_{\alpha=1}^k\Lie_{X_\alpha}\eta^\alpha = -\sum_{\alpha=1}^k(R_\alpha h)\eta^\alpha\,,\qquad \sum_{\alpha=1}^k\inn{X_\alpha}\eta^\alpha = -h\,.    
\end{equation}
We call $(M,\bm\eta,h)$   a \textit{$k$-contact Hamiltonian system}.
\qeddiamond\end{definition}

Not every element in $\Cinfty(M,\mathbb{R}^k)$ is related to an $\bm{\eta}$-Hamiltonian vector field, e.g. $\bm \eta=(\d z-p\d x)\otimes e_1 +\d w\otimes e_2$ on $\mathbb{R}^4$ has not $\bm\eta$-Hamiltonian two-function of the form $pe_1+pe_2$. But every function  
$h\in \Cinfty(M)$ gives rise to a one-form $\d h- \sum_{\alpha=1}^k(R_\alpha h)\eta^\alpha $ belonging to the annihilator of the Reeb distribution. If the codistribution $\langle \d\eta^1,\ldots,\d\eta^k\rangle $ is not equal to the annihilator of the Reeb distribution, then $\ker \d\bm \eta \cap \ker \bm \eta\neq0$, which is a contradiction. Hence, there exists a series of vector fields $X_1,\ldots,X_k$   such that $\sum_{\alpha=1}^r\iota_{X_\alpha}\d\eta^\alpha= \d h- \sum_{\alpha=1}^k(R_\alpha h)\eta^\alpha $. If $g=\sum_{\alpha=1}^k\iota_{X_\alpha}\eta^\alpha$, then, $(X_1-(h+g)R_1,X_2,\ldots,X_k)$ is an $\bm \eta$-Hamiltonian $k$-vector field related to $h$.

Let us now provide some notation  \cite{LRS_25}.

\begin{definition} Given a co-oriented $k$-contact manifold $(M,\bm \eta)$, every $k$-contact Hamiltonian $k$-function ${\bm h}\in \Cinfty_{\bm\eta}(M,\mathbb{R}^k)$ is related to its {\it Reeb derivation}, namely the vector field on $M$ of the form
$$
R_{\bm h}=\sum_{\alpha=1}^kh^\alpha R_\alpha.
$$
Moreover, let us define the so-called 
{\it Reeb tensor field} $\mathfrak{R}_{\bm\eta}:\cT M\rightarrow \cT M$ satisfying
$$
\mathfrak{R}_{\bm \eta}\theta=\sum_{\alpha=1}^k\eta^\alpha \iota_{R_\alpha} \theta,\qquad \forall \theta\in \cT M.
$$
\end{definition}
 
The advantages of the previous notation are illustrated by the conciseness of the expressions in following propositions. In particular, the following proposition will be useful to demonstrate some properties of $k$-contact Lie systems (we refer to \cite{LRS_25} for details). 

\begin{proposition}\label{Prop:Calculation}
Let $X_{\bm f}$ be the $\bm\eta$-Hamiltonian vector field of  $\bm f\in \Cinfty_{\bm\eta}(M,\mathbb{R}^k)$. Then,
\begin{enumerate}[(i)]
    \item $\Lie_{X_{\bm f }}\bm\eta  = -\sum_{\alpha,\beta=1}^{k}(R_\beta f^\alpha)\eta^\beta\otimes e_\alpha=-\mathfrak{R}_{\bm \eta}\d \bm f $,
    \item $X_{\bm f}\bm f = -\sum_{\alpha,\beta=1}^kf^\beta(R_\beta f^\alpha)  e_\alpha=-R_{\bm f}\bm f$,
    \item $[R_\beta,X_{\bm f}]=X_{ R_\beta {\bm f}},$ 
\end{enumerate}
where $R_\beta {\bm f}={\sum_{\alpha=1}^{k} R_\beta f^\alpha  e_\alpha}$ and $\beta=1,\ldots,k$. 
\end{proposition}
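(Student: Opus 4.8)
The plan is to reduce all three statements to Cartan's magic formula combined with the two characterising identities of an $\bm\eta$-Hamiltonian vector field in Proposition \ref{prop:eta-Hamiltonian-vector-field}, namely \eqref{eq:ConHam}, and the Reeb identities of Theorem \ref{thm:k-contact-Reeb}. Since contractions and Lie derivatives of $\mathbb{R}^k$-valued forms act component-wise, it is enough to work with each component $\eta^\alpha$ of $\bm\eta = \sum_{\alpha=1}^k\eta^\alpha\otimes e_\alpha$ and then reassemble with the $e_\alpha$. Two facts from Theorem \ref{thm:k-contact-Reeb} will be used repeatedly: $\Lie_{R_\beta}\eta^\alpha = \d\inn{R_\beta}\eta^\alpha + \inn{R_\beta}\d\eta^\alpha = \d\delta_\beta^\alpha + 0 = 0$, and $[R_\beta,R_\gamma]=0$.

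For (i), I would write $\Lie_{X_{\bm f}}\eta^\alpha = \inn{X_{\bm f}}\d\eta^\alpha + \d\inn{X_{\bm f}}\eta^\alpha$. By \eqref{eq:ConHam} the first summand is $\d f^\alpha - \sum_{\beta=1}^k(R_\beta f^\alpha)\eta^\beta$ and the second is $\d(-f^\alpha)$, so the $\d f^\alpha$ terms cancel and $\Lie_{X_{\bm f}}\eta^\alpha = -\sum_{\beta=1}^k(R_\beta f^\alpha)\eta^\beta$; tensoring with $e_\alpha$ and summing over $\alpha$ gives the first equality. For the second equality I would simply unwind the definition of the Reeb tensor field applied component-wise: $\mathfrak{R}_{\bm\eta}\d\bm f = \sum_\alpha(\mathfrak{R}_{\bm\eta}\d f^\alpha)\otimes e_\alpha = \sum_{\alpha,\beta}\eta^\beta(\inn{R_\beta}\d f^\alpha)\otimes e_\alpha = \sum_{\alpha,\beta}(R_\beta f^\alpha)\eta^\beta\otimes e_\alpha$, which matches up to sign.

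For (ii), I would evaluate $X_{\bm f}f^\alpha = \d f^\alpha(X_{\bm f})$ by solving \eqref{eq:ConHam} for $\d f^\alpha = \inn{X_{\bm f}}\d\eta^\alpha + \sum_{\beta=1}^k(R_\beta f^\alpha)\eta^\beta$ and pairing with $X_{\bm f}$: the term $\big(\inn{X_{\bm f}}\d\eta^\alpha\big)(X_{\bm f}) = \d\eta^\alpha(X_{\bm f},X_{\bm f})$ vanishes by skew-symmetry, while $\eta^\beta(X_{\bm f}) = \inn{X_{\bm f}}\eta^\beta = -f^\beta$, so $X_{\bm f}f^\alpha = -\sum_{\beta=1}^k f^\beta(R_\beta f^\alpha)$. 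Assembling the components gives $X_{\bm f}\bm f = -\sum_{\alpha,\beta}f^\beta(R_\beta f^\alpha)e_\alpha$, and this is exactly $-R_{\bm f}\bm f = -\sum_\beta f^\beta R_\beta\big(\sum_\alpha f^\alpha e_\alpha\big)$.

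For (iii) — the part I expect to be the main obstacle, though only at the level of careful bookkeeping — I would prove that $[R_\beta,X_{\bm f}]$ satisfies both characterising equations of Proposition \ref{prop:eta-Hamiltonian-vector-field} with $k$-function $R_\beta\bm f = \sum_\alpha(R_\beta f^\alpha)e_\alpha$, and then conclude by uniqueness. Using $\inn{[R_\beta,X_{\bm f}]} = \Lie_{R_\beta}\circ\inn{X_{\bm f}} - \inn{X_{\bm f}}\circ\Lie_{R_\beta}$ together with $\Lie_{R_\beta}\eta^\alpha = 0$, one gets $\inn{[R_\beta,X_{\bm f}]}\eta^\alpha = \Lie_{R_\beta}(-f^\alpha) = -R_\beta f^\alpha$. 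Similarly, since $\Lie_{R_\beta}\d\eta^\alpha = \d\Lie_{R_\beta}\eta^\alpha = 0$, applying $\Lie_{R_\beta}$ to the second identity of \eqref{eq:ConHam} and using $\Lie_{R_\beta}\eta^\gamma = 0$ again yields $\inn{[R_\beta,X_{\bm f}]}\d\eta^\alpha = \d(R_\beta f^\alpha) - \sum_{\gamma}(R_\beta R_\gamma f^\alpha)\eta^\gamma$; commuting $R_\beta$ past $R_\gamma$, which is legitimate because $[R_\beta,R_\gamma]=0$, turns this into $\d(R_\beta f^\alpha) - \sum_\gamma(R_\gamma R_\beta f^\alpha)\eta^\gamma$, precisely \eqref{eq:ConHam} for the $k$-function $R_\beta\bm f$. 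Finally I would note that the $\bm\eta$-Hamiltonian vector field of a given $k$-function is unique: any $Y$ annihilated by all $\eta^\alpha$ and all $\d\eta^\alpha$ lies in $\ker\bm\eta\cap\ker\d\bm\eta = 0$ by condition (3) of Definition \ref{dfn:k-contact-manifold}. Hence $[R_\beta,X_{\bm f}] = X_{R_\beta\bm f}$, which completes the proof.
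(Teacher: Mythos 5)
Your proof is correct and complete: parts (i) and (ii) follow exactly as you compute from \eqref{eq:ConHam} and Cartan's formula, and for (iii) the verification that $[R_\beta,X_{\bm f}]$ satisfies both equations of Proposition \ref{prop:eta-Hamiltonian-vector-field} with $k$-function $R_\beta\bm f$ (using $\Lie_{R_\beta}\eta^\alpha=0$, $\Lie_{R_\beta}\d\eta^\alpha=0$ and $[R_\beta,R_\gamma]=0$ from Theorem \ref{thm:k-contact-Reeb}), combined with uniqueness from $\ker\bm\eta\cap\ker\d\bm\eta=0$ in Definition \ref{dfn:k-contact-manifold}, is exactly the expected argument. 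The paper itself gives no proof of this proposition, referring instead to its companion preprint, so there is nothing to compare against; your route via the characterising identities, Cartan's magic formula, and the uniqueness of the $\bm\eta$-Hamiltonian vector field of a given $k$-function is the natural one and needs no amendment.
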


The following proposition follows from expression \eqref{eq:ConHam} and the fact that an $\bm \eta$-Hamiltonian vector field is a vector field that leaves invariant a $k$-contact distribution.
\begin{proposition}\label{Prop:EtaHam}
The space of $\bm\eta$-Hamiltonian vector fields
$\mathfrak{X}_{\bm \eta}(M)$ relative to a $k$-contact form $\bm \eta$ is a Lie algebra and, for $ {\bm h}, {\bm g}\in \Cinfty_{\bm \eta}(M,\mathbb{R}^{k})$, one has that $[X_{\bm h},X_{\bm g}]$ is the $\bm \eta$-Hamiltonian vector field related to the $\bm \eta$-Hamiltonian $k$-function $-\inn{[X_{\bm h},X_{\bm g}]}\bm\eta$.
\end{proposition}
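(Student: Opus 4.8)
The plan is to prove the statement in two stages: first that $\mathfrak{X}_{\bm\eta}(M)$ is closed under the Lie bracket (hence a Lie subalgebra of $\mathfrak{X}(M)$), and second that the bracket of two $\bm\eta$-Hamiltonian vector fields is itself $\bm\eta$-Hamiltonian with the asserted $k$-function. For the first stage I would invoke Definition \ref{def:etaHamiltonianvectorfield}: an $\bm\eta$-Hamiltonian vector field is a Lie symmetry of the distribution $\mathcal{D}=\ker\bm\eta$, i.e. $[X,\mathcal{D}]\subset\mathcal{D}$. So I must show that if $X,Y$ are both Lie symmetries of $\mathcal{D}$, then so is $[X,Y]$. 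This follows from the Jacobi identity: for any vector field $Z$ taking values in $\mathcal{D}$, one has $[[X,Y],Z]=[X,[Y,Z]]-[Y,[X,Z]]$, and since $[Y,Z]$ and $[X,Z]$ take values in $\mathcal{D}$ (as $X,Y$ are Lie symmetries), applying $[X,-]$ and $[Y,-]$ again keeps us in $\mathcal{D}$. One should be slightly careful that ``$[X,\mathcal{D}]\subset\mathcal{D}$'' is phrased in terms of the distribution generated by brackets with sections of $\mathcal{D}$, but the argument goes through at the level of local frames for the regular distribution $\mathcal{D}$, so this is routine.

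For the second stage, the cleaner route is: given $X_{\bm h}, X_{\bm g}\in\mathfrak{X}_{\bm\eta}(M)$, by the first stage $[X_{\bm h},X_{\bm g}]$ is a $k$-contact vector field relative to $(M,\mathcal{D}=\ker\bm\eta)$, i.e. a Lie symmetry of $\ker\bm\eta$. By Definition \ref{def:etaHamiltonianvectorfield} again, since $\mathcal{D}=\ker\bm\eta$ for the $k$-contact form $\bm\eta$, every such vector field is automatically an $\bm\eta$-Hamiltonian vector field, and its $\bm\eta$-Hamiltonian $k$-function is by definition $-\inn{[X_{\bm h},X_{\bm g}]}\bm\eta$. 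This is essentially the whole content: the point of the distributional reformulation is precisely that the class of $\bm\eta$-Hamiltonian vector fields coincides with the Lie symmetries of $\ker\bm\eta$, and Lie symmetries are manifestly closed under brackets.

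Alternatively, if one wants a hands-on verification using the characterisation \eqref{eq:ConHam} rather than the distributional definition, one would set $\bm h = -\inn{X_{\bm h}}\bm\eta$, $\bm g = -\inn{X_{\bm g}}\bm\eta$, and compute $\inn{[X_{\bm h},X_{\bm g}]}\eta^\alpha$ and $\inn{[X_{\bm h},X_{\bm g}]}\d\eta^\alpha$ via the identities $\inn{[X,Y]}=\Lie_X\inn{Y}-\inn{Y}\Lie_X$ and Cartan's formula, using Proposition \ref{Prop:Calculation}(i) to control $\Lie_{X_{\bm h}}\bm\eta$ and $\Lie_{X_{\bm g}}\bm\eta$; one would then check that the resulting one-forms have exactly the shape demanded by \eqref{eq:ConHam} for the $k$-function $-\inn{[X_{\bm h},X_{\bm g}]}\bm\eta$. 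The main obstacle in this second approach is bookkeeping: verifying that the $\d\eta^\alpha$-component of $\inn{[X_{\bm h},X_{\bm g}]}\d\eta^\alpha$ reassembles into $\d\big(\!-\!\inn{[X_{\bm h},X_{\bm g}]}\eta^\alpha\big)$ minus its Reeb-projection part, which forces one to track the $(R_\beta h^\alpha)\eta^\beta$ correction terms carefully. I would therefore present the distributional argument as the main proof, since it sidesteps this computation entirely, and only remark that \eqref{eq:ConHam} gives an alternative check.
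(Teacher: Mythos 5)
Your proposal is correct and follows essentially the same route as the paper, which simply notes that the result follows from the distributional characterisation of $\bm\eta$-Hamiltonian vector fields as Lie symmetries of $\ker\bm\eta$ together with \eqref{eq:ConHam}; your Jacobi-identity argument just makes explicit the (routine) closure of Lie symmetries of a distribution under the bracket. The alternative computational check via \eqref{eq:ConHam} that you sketch is not needed, as you yourself conclude.
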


In a view of the Proposition \ref{Prop:EtaHam}, one can define a following Lie bracket on $\Cinfty_{\bm \eta}(M,\mathbb{R}^{k})$.
\begin{definition}
    Given a co-oriented $k$-contact manifold $(M,\bm \eta)$, we define a bracket on $\Cinfty_{\bm \eta}(M,\mathbb{R}^{k})$ of the form \begin{equation}\label{eq:kcontactbracket}
        \{\bm h_1,\bm h_2\}_{\bm\eta} =\bm\eta([X_{\bm h_1},X_{\bm h_2}])\,,\qquad \forall \bm h_1,\bm h_2\in \Cinfty_{\bm \eta}(M,\mathbb{R}^k)\,.
\end{equation}\qeddiamond\end{definition}
The result below  immediately follows from Proposition \ref{Prop:EtaHam}  (see also \cite{LRS_25}). 
\begin{corollary}\label{Cor:MorLieBrackett}
    The bracket \eqref{eq:kcontactbracket}  induces a Lie algebra isomorphism $\phi:\bm h\in \Cinfty_{\bm \eta}(M,\mathbb{R}^{k})\mapsto -X_{\bm h}\in\X_{\bm \eta}(M)$ and an exact sequence of Lie algebra morphisms
$$\label{morph-sequence}
0\longrightarrow \Cinfty_{\bm\eta}(M,\mathbb{R}^k)\stackrel{\phi}{\longrightarrow} \X_{\bm \eta}(M)\longrightarrow 0\,,
$$
where $\Cinfty_{\bm\eta}(M,\mathbb{R}^k)$ is endowed with the Lie bracket \eqref{eq:kcontactbracket}. 
\end{corollary}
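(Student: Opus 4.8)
The plan is to verify directly that the map $\phi:\bm h\mapsto -X_{\bm h}$ is a Lie algebra homomorphism with respect to the bracket $\{\cdot,\cdot\}_{\bm\eta}$ defined in \eqref{eq:kcontactbracket}, and that it is a bijection, whence the claimed exact sequence. The surjectivity of $\phi$ is immediate: by Definition \ref{def:etaHamiltonianvectorfield}, every $\bm\eta$-Hamiltonian vector field $X$ arises from some $\bm\eta$-Hamiltonian $k$-function, namely $-\inn{X}{\bm\eta}$, so $\X_{\bm\eta}(M)$ is by construction the image of $\phi$. For injectivity, suppose $X_{\bm h}=0$; then $\bm h = -\inn{X_{\bm h}}{\bm\eta}=0$, so $\ker\phi=0$. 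The only genuine content is therefore that $\phi$ respects brackets, i.e. $-X_{\{\bm h_1,\bm h_2\}_{\bm\eta}} = [-X_{\bm h_1},-X_{\bm h_2}] = [X_{\bm h_1},X_{\bm h_2}]$.

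To see this, first recall from Proposition \ref{Prop:EtaHam} that $[X_{\bm h_1},X_{\bm h_2}]$ is itself an $\bm\eta$-Hamiltonian vector field, and that its $\bm\eta$-Hamiltonian $k$-function is $-\inn{[X_{\bm h_1},X_{\bm h_2}]}{\bm\eta}$. By the definition of the bracket, $\{\bm h_1,\bm h_2\}_{\bm\eta} = \bm\eta([X_{\bm h_1},X_{\bm h_2}]) = \inn{[X_{\bm h_1},X_{\bm h_2}]}{\bm\eta}$, so the $\bm\eta$-Hamiltonian $k$-function of $[X_{\bm h_1},X_{\bm h_2}]$ is precisely $-\{\bm h_1,\bm h_2\}_{\bm\eta}$. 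Since the $\bm\eta$-Hamiltonian vector field associated to a given $\bm\eta$-Hamiltonian $k$-function is unique (Proposition \ref{prop:eta-Hamiltonian-vector-field} determines $X$ from $\bm h$ via the first equation of \eqref{eq:ConHam}, which fixes $\inn{X}{\eta^\alpha}$, together with the fact that $\ker\bm\eta\cap\ker\d\bm\eta=0$ forces the remaining components), we conclude $X_{\{\bm h_1,\bm h_2\}_{\bm\eta}} = [X_{\bm h_1},X_{\bm h_2}]$, hence $\phi(\{\bm h_1,\bm h_2\}_{\bm\eta}) = -[X_{\bm h_1},X_{\bm h_2}] = [-X_{\bm h_1},-X_{\bm h_2}] = [\phi(\bm h_1),\phi(\bm h_2)]$.

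It remains to check that $\{\cdot,\cdot\}_{\bm\eta}$ is indeed a Lie bracket on $\Cinfty_{\bm\eta}(M,\R^k)$, so that the statement "Lie algebra isomorphism" is meaningful. Bilinearity and antisymmetry are inherited from the commutator of vector fields through the linear bijection $\phi$; the Jacobi identity for $\{\cdot,\cdot\}_{\bm\eta}$ is transported from the Jacobi identity for $[\cdot,\cdot]$ on $\X_{\bm\eta}(M)$ via the same bijection. (One should also note that $\{\bm h_1,\bm h_2\}_{\bm\eta}$ genuinely lies in $\Cinfty_{\bm\eta}(M,\R^k)$ and not merely in $\Cinfty(M,\R^k)$, which is exactly the content recorded in Proposition \ref{Prop:EtaHam}.) Finally, the displayed sequence $0\to \Cinfty_{\bm\eta}(M,\R^k)\xrightarrow{\phi}\X_{\bm\eta}(M)\to 0$ is exact simply because $\phi$ is injective (exactness at the left) and surjective (exactness at the right), both established above.

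The main obstacle is essentially bookkeeping rather than any deep difficulty: one must be careful that the uniqueness of the $\bm\eta$-Hamiltonian vector field attached to a $k$-function is actually guaranteed — this is where the non-degeneracy condition $\ker\bm\eta\cap\ker\d\bm\eta=0$ from Definition \ref{dfn:k-contact-manifold} enters, ensuring that two vector fields with the same contraction against $\bm\eta$ and against $\d\bm\eta$ coincide. Once that uniqueness is in hand, everything else is a formal transport of structure along the bijection $\phi$, and the identification $\{\bm h_1,\bm h_2\}_{\bm\eta} = -(\text{Hamiltonian }k\text{-function of }[X_{\bm h_1},X_{\bm h_2}])$ follows directly from unwinding the definitions.
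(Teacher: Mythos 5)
Your proof is correct and follows essentially the same route as the paper, which obtains the corollary directly from Proposition \ref{Prop:EtaHam} together with the uniqueness of the $\bm\eta$-Hamiltonian vector field attached to a given $k$-function (a consequence of $\ker\bm\eta\cap\ker\d\bm\eta=0$, exactly as you note). One caveat: the chain of equalities in your second paragraph contains two sign slips that happen to cancel. Since the $\bm\eta$-Hamiltonian $k$-function of $[X_{\bm h_1},X_{\bm h_2}]$ is $-\{\bm h_1,\bm h_2\}_{\bm\eta}$, uniqueness and linearity of $\bm h\mapsto X_{\bm h}$ give $X_{\{\bm h_1,\bm h_2\}_{\bm\eta}}=-[X_{\bm h_1},X_{\bm h_2}]$, not $+[X_{\bm h_1},X_{\bm h_2}]$ as you wrote; likewise $[-X_{\bm h_1},-X_{\bm h_2}]=[X_{\bm h_1},X_{\bm h_2}]$, not $-[X_{\bm h_1},X_{\bm h_2}]$. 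With both corrected, the computation reads $\phi(\{\bm h_1,\bm h_2\}_{\bm\eta})=-X_{\{\bm h_1,\bm h_2\}_{\bm\eta}}=[X_{\bm h_1},X_{\bm h_2}]=[-X_{\bm h_1},-X_{\bm h_2}]=[\phi(\bm h_1),\phi(\bm h_2)]$, which is the morphism property you want; the remainder of your argument (well-definedness and uniqueness of $X_{\bm h}$, injectivity, surjectivity, and the transport of bilinearity, antisymmetry and the Jacobi identity along the bijection) stands as written.
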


Note that
\begin{equation}
\label{eq:Lie_bracket_Ham}
\{{\bm h}_1,{\bm h}_2\}_{\bm \eta}=\bm \eta([X_{{\bm h}_1},X_{{\bm h}_2}])=(\Lie_{X_{\bm h_1}}\iota_{X_{\bm h_2}}-\iota_{X_{\bm h_2}}\Lie_{X_{\bm h_1}})\bm\eta=-\Lie_{X_{{\bm h}_1}}{\bm h_2}-R_{{\bm h}_2}{\bm h}_1,\qquad 
\end{equation}
for all ${\bm h}_1,{\bm h}_2\in \Cinfty_{\bm \eta}(M,\mathbb{R}^k)$
and 
$$
X_{\bm f}{\bm f}=-R_{\bm f}{\bm f},\qquad \forall {\bm f}\in \Cinfty_{\bm \eta}(M,\mathbb{R}^k).
$$
It is remarkable that, in the contact case, a dissipated quantity (see \cite{Lop_24} and references therein) is a function such that $X_hf=-f(Rh)$. Hence, the expression above can be used to define an analogue of dissipated quantities in $k$-contact geometry.
\begin{definition} A {\it dissipated $k$-contact $k$-function} relative to a $k$-contact Hamiltonian system $(M,\bm \eta,\bm h)$ is an  ${\bm \eta}$-contact $k$-function ${\bm f}\in \Cinfty_{\bm \eta}(M,\mathbb{R}^k)$ such that
$$
X_{\bm h}{\bm f}=-R_{\bm f}{\bm h}.
$$
\end{definition}
In particular every ${\bm f}\in \Cinfty_{\bm \eta}(M,\mathbb{R}^k)$ is dissipated relative to itself, namely $X_{
\bm f}{\bm f}=-R_{\bm f}{\bm f}$. Moreover, 
$$
\{{\bm h},{\bm f}\}=0\qquad \Longleftrightarrow   \qquad 
X_{\bm h}\bm f=-R_{\bm f}{\bm h}.
$$
In other words, dissipated quantities are those that $\bm \eta$-commute with ${\bm h}$. Moreover, an ${\bm \eta}$-contact $k$-function is dissipated if, and only if, its associated $\bm\eta$-Hamiltonian vector field is a Lie symmetry of $X_{\bm h}$.

On the other hand, note that
$$
\iota_{X_{\bm h}}\d\bm \eta=\d {\bm h}-\mathfrak{R}_{\bm \eta}{\d\bm h}=({\rm Id}_{\cT M}-\mathfrak{R}_{\bm \eta})(\d{\bm h}),\qquad \iota_{X_{\bm h}}\bm\eta=-\bm h,\qquad \Lie_{X_{\bm h }}\bm\eta=-\mathfrak{R}_{\bm \eta}\d{\bm h}.
$$
It is relevant that \eqref{eq:kcontactbracket} is not a Poisson bracket and ${\bm h}_2$ may be a constant function having a non-vanishing Lie bracket with other functions. Examples of this will be shown in Section \ref{Sec:applications}.

Let us  now study the relation of $k$-contact distributions with the maximal non-integrability notion defined via distributions \cite{Vit_15}.
\begin{definition}
\label{def:max_non_integrable_mapping}
    Let $\mathcal{D}$ be a regular distribution on $M$ and let $\pi\colon\T M\rightarrow \T M/\mathcal{D}$ be the natural vector bundle projection. Then, $\mathcal{D}$  is {\it maximally non-integrable} in a {\it distributional sense} if $\mathcal{D}\neq 0$ and the vector bundle mapping $\rho\colon \mathcal{D}\times_M \mathcal{D}\rightarrow \T M/\mathcal{D}$ over $M$ given by
    \begin{equation}\label{eq:CurvDis}
        \rho(v,v')=\pi([X,X']_x)\,,\qquad \forall v,v'\in \mathcal{D}_x\,,\qquad \forall x\in M\,,
    \end{equation}
    where $X,X'$ are vector fields taking values in $\mathcal{D}$ locally defined around $x$ such that $X_x=v$ and $X'_x=v'$, is non-degenerate.
\qeddiamond\end{definition}

The mapping $\rho$ is well defined as shown in Proposition \ref{Prop:WellDefined} in the Appendix or \cite{LRS_25}.
It is worth noting that the terms ``maximally non-integrable" and ``completely non-integrable" have been used in the literature with equal, different, or even related meanings  \cite{Ada_21,BH_16,LRS_25}. 

\begin{proposition}\label{Prop:MaxNonkCon}
    If $(M,\bm\eta)$ is a co-oriented $k$-contact manifold, then $\d\bm\eta$ is non-degenerate when restricted to $\ker \bm \eta$. In other words, $\ker\bm\eta$ is maximally non-integrable.
\end{proposition}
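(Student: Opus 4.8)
The plan is to prove the two equivalent formulations at once. First I would reduce maximal non-integrability of $\mathcal{D}:=\ker\bm\eta$ in the sense of Definition \ref{def:max_non_integrable_mapping} to the statement that the $\mathbb{R}^k$-valued two-form $\d\bm\eta=\sum_{\alpha=1}^k\d\eta^\alpha\otimes e_\alpha$ has trivial radical on each fibre of $\ker\bm\eta$. Since $\mathcal{D}$ has corank $k$, for each $x\in M$ the linear map $\bm\eta_x\colon\T_xM\to\mathbb{R}^k$ is surjective with kernel $\mathcal{D}_x$, hence it induces an isomorphism $\overline{\bm\eta}_x\colon\T_xM/\mathcal{D}_x\to\mathbb{R}^k$ with $\overline{\bm\eta}_x\circ\pi_x=\bm\eta_x$. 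For vector fields $X,X'$ taking values in $\mathcal{D}$ one has $\iota_X\eta^\alpha=\iota_{X'}\eta^\alpha=0$, so the Cartan identity $\d\eta^\alpha(X,X')=X(\iota_{X'}\eta^\alpha)-X'(\iota_X\eta^\alpha)-\iota_{[X,X']}\eta^\alpha$ collapses to $\iota_{[X,X']}\eta^\alpha=-\d\eta^\alpha(X,X')$; writing $v=X_x$, $v'=X'_x$, this yields
$$\rho(v,v')=\pi_x([X,X']_x)=(\overline{\bm\eta}_x)^{-1}\!\Big(-\sum_{\alpha=1}^k\d\eta^\alpha_x(v,v')\,e_\alpha\Big),$$
which depends only on $v$ and $v'$ (recovering the well-definedness of Proposition \ref{Prop:WellDefined}). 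Hence $\rho$ is non-degenerate at $x$ if and only if the only $v\in\mathcal{D}_x$ with $\d\eta^\alpha_x(v,v')=0$ for all $v'\in\mathcal{D}_x$ and all $\alpha=1,\dots,k$ is $v=0$, i.e.\ if and only if $\d\bm\eta|_{\mathcal{D}_x}$ has trivial radical.

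Next I would prove this last property fibrewise. By (1) and (2) of Definition \ref{dfn:k-contact-manifold}, $\dim\mathcal{D}_x=m-k$ and $\dim(\ker\d\bm\eta)_x=k$, so (3) gives the pointwise splitting $\T_xM=\mathcal{D}_x\oplus(\ker\d\bm\eta)_x$, and $(\ker\d\bm\eta)_x=\langle R_1(x),\dots,R_k(x)\rangle$ by Theorem \ref{thm:k-contact-Reeb}. Let $v\in\mathcal{D}_x$ be such that $\iota_v\d\eta^\alpha$ vanishes on $\mathcal{D}_x$ for every $\alpha$. Given any $w\in\T_xM$, write $w=w_0+\sum_{\beta=1}^kc^\beta R_\beta(x)$ with $w_0\in\mathcal{D}_x$; then, using the hypothesis on the first summand and $\iota_{R_\beta}\d\eta^\alpha=0$ on the remaining ones,
$$\iota_v\d\eta^\alpha(w)=\d\eta^\alpha(v,w_0)+\sum_{\beta=1}^kc^\beta\,\d\eta^\alpha(v,R_\beta(x))=-\sum_{\beta=1}^kc^\beta\,(\iota_{R_\beta}\d\eta^\alpha)(v)=0.$$
Thus $\iota_v\d\eta^\alpha=0$ on all of $\T_xM$ for each $\alpha$, so $v\in(\ker\d\bm\eta)_x$; since also $v\in\mathcal{D}_x=(\ker\bm\eta)_x$, condition (3) forces $v=0$. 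This establishes that $\d\bm\eta|_{\ker\bm\eta}$ is non-degenerate and, by the previous paragraph, that $\ker\bm\eta$ is maximally non-integrable.

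The only step carrying genuine content is the passage in the second paragraph from ``$\iota_v\d\eta^\alpha$ annihilates $\mathcal{D}_x$'' to ``$\iota_v\d\eta^\alpha=0$ on $\T_xM$'': the curvature map $\rho$ records only the restriction of $\d\bm\eta$ to $\mathcal{D}_x$, whereas condition (3) constrains the full kernel of $\d\bm\eta$, and the gap is bridged precisely by the transversality $\T_xM=\mathcal{D}_x\oplus\ker\d\bm\eta_x$ together with the Reeb relations $\iota_{R_\beta}\d\eta^\alpha=0$. The remaining ingredients — the rank/corank count from (1)--(2) and the Cartan formula for $\d$ of a one-form — are routine, and since the whole argument is linear-algebraic and fibrewise I expect no analytic or globalisation difficulty.
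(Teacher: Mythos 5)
Your proof is correct and follows essentially the same route as the paper: you identify the curvature map $\rho$ with $-\d\bm\eta$ restricted to $\ker\bm\eta$ (the paper's equation for $\rho$ in Proposition \ref{Prop:WellDefined}), and then use the splitting $\T_xM=\ker\bm\eta_x\oplus\ker\d\bm\eta_x$ together with $\iota_{R_\beta}\d\eta^\alpha=0$ to force any radical vector into $\ker\bm\eta_x\cap\ker\d\bm\eta_x=0$. The paper phrases this as a proof by contradiction while you argue directly, but the content is identical.
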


Let us recall the main result  in \cite{LRS_25} used in this work.
\begin{theorem}\label{Prop:LocalEqu} 
    A distribution $\mathcal{D}$ on $M$ is a $k$-contact distribution if, and only if, it is maximally non-integrable and, around an open neighbourhood $U$ of every $x\in M$, admits an integrable $k$-vector field ${\bf S} = (S_1,\ldots, S_k)$ of Lie symmetries of $\mathcal{D}|_U$ such that
    \begin{equation}\label{eq:Dec}
        \langle S_1,\ldots,S_k\rangle \oplus \mathcal{D}|_U = \T U\,.
    \end{equation}
\end{theorem}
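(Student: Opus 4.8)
The plan is to prove the two implications separately, working always in a neighbourhood $U$ of a fixed point $x\in M$ and using Theorem \ref{thm:k-contact-Reeb} for the forward direction.

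\emph{Necessity.} Suppose $\mathcal{D}$ is a $k$-contact distribution. By definition there is an open $U\ni x$ and a $k$-contact form $\bm\eta=\sum_{\alpha=1}^k\eta^\alpha\otimes e_\alpha$ on $U$ with $\restr{\mathcal{D}}{U}=\ker\bm\eta$. Maximal non-integrability of $\mathcal{D}$ is exactly Proposition \ref{Prop:MaxNonkCon}. For the $k$-vector field of Lie symmetries, I would take ${\bf S}=(R_1,\dots,R_k)$, the Reeb vector fields supplied by Theorem \ref{thm:k-contact-Reeb}. They commute, so ${\bf S}$ is integrable; they satisfy $\inn{R_\alpha}\eta^\beta=\delta_\alpha^\beta$ and $\inn{R_\alpha}\d\eta^\beta=0$, hence each $R_\alpha$ is an $\bm\eta$-Hamiltonian vector field (its $\bm\eta$-Hamiltonian $k$-function being the constant $-e_\alpha$ by Proposition \ref{prop:eta-Hamiltonian-vector-field}), and in particular a Lie symmetry of $\ker\bm\eta=\mathcal{D}$. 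Finally, condition (3) of Definition \ref{dfn:k-contact-manifold}, $\ker\bm\eta\cap\ker\d\bm\eta=0$, together with $\ker\d\bm\eta=\langle R_1,\dots,R_k\rangle$ and a dimension count (corank of $\ker\bm\eta$ is $k$, and the $R_\alpha$ are independent since $\inn{R_\alpha}\eta^\beta=\delta_\alpha^\beta$), gives the direct sum decomposition $\langle R_1,\dots,R_k\rangle\oplus\mathcal{D}|_U=\T U$. This direction is essentially a repackaging of results already in the excerpt.

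\emph{Sufficiency.} Now suppose $\mathcal{D}$ is maximally non-integrable and admits, on some $U\ni x$, an integrable $k$-vector field ${\bf S}=(S_1,\dots,S_k)$ of Lie symmetries of $\mathcal{D}|_U$ with $\langle S_1,\dots,S_k\rangle\oplus\mathcal{D}|_U=\T U$. I must manufacture a $k$-contact form $\bm\eta$ with $\ker\bm\eta=\mathcal{D}|_U$. Since ${\bf S}$ is integrable, the $S_\alpha$ commute, so (shrinking $U$) there are coordinates in which $S_\alpha=\partial/\partial s^\alpha$; the decomposition \eqref{eq:Dec} lets me define $\eta^\alpha\in\Omega^1(U)$ as the unique one-forms with $\ker\eta^1\cap\dots\cap\ker\eta^k=\mathcal{D}|_U$ and $\inn{S_\beta}\eta^\alpha=\delta_\beta^\alpha$ (i.e. $\eta^\alpha$ annihilates $\mathcal{D}$ and is dual to the $S_\beta$-frame). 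Set $\bm\eta=\sum_\alpha\eta^\alpha\otimes e_\alpha$. Then condition (1) of Definition \ref{dfn:k-contact-manifold} holds by construction: $\ker\bm\eta=\mathcal{D}|_U$ is regular, nonzero, of corank $k$. The key points to verify are (2) and (3). For (2)–(3) I would use that each $S_\alpha$ is a Lie symmetry of $\mathcal{D}$: the relation $[S_\alpha,\mathcal{D}]\subset\mathcal{D}$ translates, via $\Lie_{S_\alpha}\eta^\beta = \inn{S_\alpha}\d\eta^\beta + \d(\inn{S_\alpha}\eta^\beta) = \inn{S_\alpha}\d\eta^\beta$ (the second term vanishes since $\inn{S_\alpha}\eta^\beta$ is constant), into the statement that $\Lie_{S_\alpha}\eta^\beta$ annihilates $\mathcal{D}$ — indeed for $Y$ valued in $\mathcal{D}$, $(\Lie_{S_\alpha}\eta^\beta)(Y)=S_\alpha(\eta^\beta(Y))-\eta^\beta([S_\alpha,Y])=-\eta^\beta([S_\alpha,Y])=0$. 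Hence $\inn{S_\alpha}\d\eta^\beta$ lies in the annihilator of $\mathcal{D}$, i.e. in $\langle\eta^1,\dots,\eta^k\rangle$, and since also $\inn{S_\alpha}\inn{S_\beta}\d\eta^\gamma = -(\Lie_{S_\beta}\eta^\gamma)(S_\alpha) = 0$ one gets $\inn{S_\alpha}\d\eta^\beta=0$ for all $\alpha,\beta$. This shows $\langle S_1,\dots,S_k\rangle\subset\ker\d\bm\eta$. Conversely, maximal non-integrability says $\rho$ of \eqref{eq:CurvDis} is non-degenerate, which means precisely that $\d\bm\eta$ restricted to $\mathcal{D}\times_M\mathcal{D}$ is non-degenerate in each $\eta$-component appropriately — so $\ker\d\bm\eta\cap\mathcal{D}=0$; combined with $\T U=\langle S_1,\dots,S_k\rangle\oplus\mathcal{D}$ this forces $\ker\d\bm\eta=\langle S_1,\dots,S_k\rangle$, giving both rank $k$ (condition (2)) and $\ker\bm\eta\cap\ker\d\bm\eta=\mathcal{D}\cap\langle S_\alpha\rangle=0$ (condition (3)).

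\emph{Main obstacle.} The delicate step is the sufficiency direction: turning "maximally non-integrable" into "$\ker\d\bm\eta$ has rank exactly $k$ and meets $\ker\bm\eta$ trivially." One must be careful that non-degeneracy of the vector-bundle map $\rho\colon\mathcal{D}\times_M\mathcal{D}\to\T M/\mathcal{D}$ genuinely yields that $\d\eta^\alpha|_{\mathcal{D}}$ are jointly non-degenerate (no common kernel) rather than merely that $\d\bm\eta|_{\mathcal{D}}$ is "non-degenerate" in some weaker sense; identifying $\T M/\mathcal{D}\cong\R^k$ via the coframe $(\eta^1,\dots,\eta^k)$ makes $\rho(v,v')=\sum_\alpha\d\eta^\alpha(v,v')e_\alpha$, so non-degeneracy of $\rho$ says: if $v\in\mathcal{D}_y$ satisfies $\d\eta^\alpha(v,v')=0$ for all $v'\in\mathcal{D}_y$ and all $\alpha$, then $v=0$ — which is exactly $\ker\d\bm\eta\cap\mathcal{D}=0$ after one checks that no "off-diagonal" directions in $\langle S_\alpha\rangle$ sneak into $\ker\d\bm\eta\cap\mathcal{D}$, handled by the $\inn{S_\alpha}\d\eta^\beta=0$ computation above. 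I would also need to double-check that the construction of $\bm\eta$ is independent of the frame choice only up to the expected $GL(k,\R)$-gauge, but since the theorem only asserts existence of \emph{some} $k$-contact form with the given kernel, this is not logically required.
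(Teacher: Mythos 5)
Your proposal is correct and follows essentially the same route as the paper: the forward direction is the Reeb vector fields of $\bm\eta$ together with Proposition \ref{Prop:MaxNonkCon}, and the converse constructs exactly the same one-forms $\eta^\alpha$ (annihilating $\mathcal{D}$, dual to the $S_\beta$), shows $\langle S_1,\ldots,S_k\rangle\subset\ker\d\bm\eta$ from the Lie-symmetry and commutation properties, and then uses maximal non-integrability to pin down $\ker\d\bm\eta$ and conclude conditions (2)--(3). The only blemish is a harmless sign slip in $\inn{S_\alpha}\inn{S_\beta}\d\eta^\gamma=(\Lie_{S_\beta}\eta^\gamma)(S_\alpha)$, which vanishes anyway because $\eta^\gamma(S_\alpha)$ is constant and $[S_\beta,S_\alpha]=0$.
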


The proof of the previous theorem is indeed the method used in our applications to obtain a $k$-contact form $\bm \eta$ turning out the vector fields of a Vessiot--Guldberg Lie algebra into $\bm \eta$-Hamiltonian vector fields. The reason for the use of the previous theorem is that it is difficult to obtain a $k$-contact form compatible with a Lie system, but it is pretty much simpler in many cases to obtain a compatible $k$-contact distribution.  This will be the key in this paper to obtain the applications in control theory shown in \cite{Ram_02}. Indeed, the difference between the abundance of examples in this work and \cite{LR_23} illustrates that it is difficult to obtain Lie systems with a compatible contact or, even more difficult, $k$-contact form. 

\section{\texorpdfstring{$k$}{}-contact Lie systems}\label{Sec:kcontactLiesystem}
Let us introduce $k$-contact Lie systems and determine their fundamental properties. It is worth noting that the notion is a generalisation of contact Lie systems \cite{LR_23}, and it follows the general idea of Lie systems admitting compatible geometric structures \cite{HLT_17,GLMV_19,LS_20}.

\begin{definition}
    A {\it $k$-contact Lie system} is a triple $(M,\bm\eta,X)$, where $\bm\eta$ is a $k$-contact form on $M$ and $X$ is a Lie system on $M$ whose smallest 
    Lie algebra, $V^X$, consists of $\bm\eta$-Hamiltonian vector fields. A {\it $k$-contact Lie system} $(M,\bm\eta,X)$ is called \textit{projectable}  if the $\bm\eta$-Hamiltonian $k$-functions associated with the vector fields in $V^X$ are first integrals of the Reeb vector fields of $(M,\bm\eta)$. 
\qeddiamond\end{definition}

$k$-Contact Lie systems are called {\it projectable} because they can be projected onto $k$-symplectic Lie systems \cite{LV_15}, as it will be shown soon. Sometimes, $X$ is simply said to be a $k$-contact Lie system if there exists some $k$-contact form turning it into a $k$-contact Lie system $(M,\bm \eta,X)$. Moreover, ${\bm \eta}$-Hamiltonian vector fields, and other names of notions containing ${\bm \eta}$ like ${\bm \eta}$-Hamiltonian $k$-functions, may be called $k$-contact Hamiltonian vector fields or $k$-contact Hamiltonian $k$-functions when the knowledge of the particular ${\bm \eta}$ is not important.

Proposition \ref{Prop:Calculation} yields that  an $\bm\eta$-Hamiltonian $k$-function of an $\bm\eta$-Hamiltonian vector field is a first integral of all the Reeb vector fields of $\bm \eta$ if, and only if, the $\bm\eta$-Hamiltonian vector field commutes with all the Reeb vector fields of $\bm \eta$. Hence, a projectable $k$-contact Lie system $(M,\bm\eta,X)$ amounts to a Lie system $X$ on a manifold $M$ such that each element of $V^X\subset \mathfrak{X}_{\bm \eta}(M)$ is invariant relative to the flows of all the Reeb vector fields of $\bm\eta$. In other words, $R_1,\ldots,R_k$ are Lie symmetries of $V^X$ and, therefore, are Lie symmetries of the $k$-contact Lie system itself.

Again due to the isomorphism of Lie algebras 
\begin{equation}\label{eq:IsomorLie}
\Cinfty_{\bm \eta}(M,\mathbb{R}^k) \ni \bm f\longmapsto-X_{\bm f} \in \mathfrak{X}_{\bm \eta}(M),
\end{equation} 
 every $k$-contact Lie system $(M,\bm\eta,X)$ is related to a unique $t$-dependent $\bm\eta$-Hamiltonian $k$-function $\bm h:\mathbb{R}\times M
\rightarrow \mathbb{R}^k$ whose $\{{\bm h}_t\}_{t\in \mathbb{R}}$ are contained in a finite-dimensional Lie algebra of $\bm\eta$-Hamiltonian $k$-functions, and vice versa.  This motivates the following definition.

\begin{definition} A {\it $k$-contact Lie--Hamiltonian system} is a triple $(M,\bm\eta,{\bm h}:\mathbb{R}\times M\rightarrow \mathbb{R}^k)$, where $(M,\bm \eta)$ is a $k$-contact manifold and $\bm h$ is a $t$-dependent family of $k$-functions ${\bm h}_t:x\in M
\mapsto {\bm h}(t,x)\in \mathbb{R}^k$, with $t\in \mathbb{R}$, contained in a finite-dimensional Lie algebra $\mathfrak{W}$ of $\bm\eta$-Hamiltonian $k$-functions relative to the Lie bracket in $\Cinfty_{\bm \eta}(M,\mathbb{R}^k)$. We call $\mathfrak{W}$ an  ${\bm \eta}$-Lie--Hamilton algebra.
\qeddiamond\end{definition}

As the function ${\bm h}:\mathbb{R}\times M\rightarrow \mathbb{R}^k$ in $(M,\bm\eta,{\bm h}:\mathbb{R}\times M\rightarrow \mathbb{R}^k)$ gives rise to a $t$-dependent family of $\bm\eta$-Hamiltonian $k$-functions contained in an $\bm\eta$-Lie--Hamilton algebra  $\mathfrak{W}$, the isomorphism \eqref{eq:IsomorLie} yields that the $t$-dependent $\bm\eta$-Hamiltonian vector field $X_{\bm h}$ is a Lie system with a Vessiot--Guldberg Lie algebra given by the $\bm \eta$-Hamiltonian vector fields of $\mathfrak{W}$. Then, one has the following proposition.

\begin{proposition} Every $k$-contact Lie system with respect to a $k$-contact form $\bm \eta$ gives rise to a unique ${\it \bm \eta}$-Lie--Hamiltonian system, and vice versa. 
\end{proposition}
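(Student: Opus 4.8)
The plan is to establish the bijective correspondence by carefully unwinding the definitions of $k$-contact Lie system and $k$-contact Lie--Hamiltonian system and invoking the Lie algebra isomorphism \eqref{eq:IsomorLie}. The key structural fact is Corollary \ref{Cor:MorLieBrackett}: the map $\phi:\bm f\mapsto -X_{\bm f}$ is a Lie algebra isomorphism between $\Cinfty_{\bm\eta}(M,\mathbb{R}^k)$ (with the bracket \eqref{eq:kcontactbracket}) and $\X_{\bm\eta}(M)$. Everything then reduces to transporting the defining data of one structure across this isomorphism and checking that the ``finite-dimensional Lie algebra'' condition is preserved in both directions.

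First I would take a $k$-contact Lie system $(M,\bm\eta,X)$. By definition $V^X$, the smallest Lie algebra containing $\{X_t\}_{t\in\mathbb{R}}$, consists of $\bm\eta$-Hamiltonian vector fields, so $V^X$ is a finite-dimensional Lie subalgebra of $\X_{\bm\eta}(M)$ (finite-dimensionality is part of $X$ being a Lie system, via Lie's Theorem \ref{Lie_theorem}). Applying $\phi^{-1}$ from Corollary \ref{Cor:MorLieBrackett}, set $\mathfrak{W}:=\phi^{-1}(V^X)\subset\Cinfty_{\bm\eta}(M,\mathbb{R}^k)$, which is a finite-dimensional Lie algebra relative to the bracket \eqref{eq:kcontactbracket}. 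For each $t$, since $X_t=-X_{\bm h_t}$ where $\bm h_t=-\iota_{X_t}\bm\eta$ is the $\bm\eta$-Hamiltonian $k$-function of $X_t$, the assignment $\bm h:(t,x)\mapsto \bm h_t(x)$ is a $t$-dependent $k$-function with $\{\bm h_t\}_{t\in\mathbb{R}}\subset\mathfrak{W}$. Thus $(M,\bm\eta,\bm h)$ is a $k$-contact Lie--Hamiltonian system, and it is the unique one compatible with $X$ because $\phi$ is injective, so the $\bm\eta$-Hamiltonian $k$-function of each $X_t$ is uniquely determined.

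Conversely, given $(M,\bm\eta,\bm h)$ with $\{\bm h_t\}_{t\in\mathbb{R}}$ inside a finite-dimensional $\bm\eta$-Lie--Hamilton algebra $\mathfrak{W}$, the paragraph preceding the statement already observes that $X_{\bm h}$ is a $t$-dependent vector field whose family $\{X_{\bm h_t}\}$ lies in the finite-dimensional Lie algebra $\phi(\mathfrak{W})\subset\X_{\bm\eta}(M)$; hence $X:=-X_{\bm h}$ (or $X_{\bm h}$, depending on the sign convention one fixes for ``associated $t$-dependent vector field'') is a Lie system by Lie's Theorem, and its smallest Lie algebra $V^X$ is contained in $\phi(\mathfrak{W})\subset\X_{\bm\eta}(M)$, so it consists of $\bm\eta$-Hamiltonian vector fields. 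Therefore $(M,\bm\eta,X)$ is a $k$-contact Lie system, and uniqueness again follows from the bijectivity of $\phi$. Running the two constructions against each other and using $\phi\circ\phi^{-1}=\mathrm{id}$ and $\phi^{-1}\circ\phi=\mathrm{id}$ shows they are mutually inverse, which completes the correspondence.

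The main subtlety — not a deep obstacle, but the point that needs care — is matching $V^X$ (the \emph{smallest} Lie algebra, an intrinsic minimal object) with $\mathfrak{W}$ (which the definition only requires to be \emph{some} finite-dimensional Lie algebra containing the $\bm h_t$). The clean statement is that $(M,\bm\eta,X)$ canonically produces $(M,\bm\eta,\bm h)$ with the minimal choice $\mathfrak{W}=\phi^{-1}(V^X)$, while any $k$-contact Lie--Hamiltonian system produces a Lie system whose $V^X$ sits inside $\phi(\mathfrak{W})$; the ``vice versa'' is therefore an equivalence of the two \emph{notions}, not a literal identification of the auxiliary algebras, exactly as in the contact ($k=1$) case recorded after the definition of contact Lie--Hamiltonian system. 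One should also briefly note that $\phi$ being an isomorphism of Lie algebras guarantees that $\langle\{X_t\}\rangle$ is finite-dimensional if and only if $\langle\{\bm h_t\}\rangle$ is, so the Lie system condition transfers in both directions without extra hypotheses.
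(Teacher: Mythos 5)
Your argument is correct and follows essentially the same route as the paper, which obtains the proposition directly from the Lie algebra isomorphism \eqref{eq:IsomorLie} (Corollary \ref{Cor:MorLieBrackett}) by transporting $V^X$ and an $\bm\eta$-Lie--Hamilton algebra across it, and your remark on the non-uniqueness of $\mathfrak{W}$ agrees with the paper's subsequent observation. One cosmetic point: with the paper's conventions $\bm h_t=-\iota_{X_t}\bm\eta$ yields $X_{\bm h_t}=X_t$ (the minus sign in $\phi$ only serves to turn an anti-isomorphism into an isomorphism and does not affect which vector field is associated with $\bm h$), so the system attached to $(M,\bm\eta,\bm h)$ is $X_{\bm h}$ itself, as you note parenthetically.
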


Anyhow, it is remarkable that $\mathfrak{W}$ need not be unique, as shown in following examples.

\begin{example}\label{Ex:TDepIsotropic}
(\textbf{$t$-dependent frequency isotropic harmonic oscillators}) Consider the system of harmonic isotropic oscillators on $\mathbb{R}^2$ with a $t$-dependent frequency  described by the $t$-dependent system of ordinary differential equations on $\T\mathbb{R}^2\simeq\mathbb{R}^4$ of the form
\begin{equation}\label{eq:Iso2DimLie}\left\{
\begin{aligned}
 \frac{\d x_i}{\d t} &=v_i,\\
\frac{\d v_i}{\d t}&=-\nu^2(t)x_i, 
\end{aligned}\right.\qquad i=1,2,   
\end{equation}
for an arbitrary $t$-dependent frequency $\nu(t)$ and coordinates $x_1,v_1,x_2,v_2$. Define three vector fields on $\T\mathbb{R}^2\simeq \mathbb{R}^4$ given by
$$
X_1=\sum_{i=1}^2v_i\frac{\partial}{\partial x_i}\,\qquad X_2=\frac 12\sum_{i=1}^2\left(x_i\frac{\partial}{\partial x_i}-v_i\frac{\partial}{\partial v_i}\right)\,,\qquad X_3 =- \sum_{i=1}^2 x_i\frac{\partial}{\partial v_i}\,.
$$
The vector fields $X_1,X_2,X_3$ span (over the real numbers) a Lie algebra $V_{is}$ isomorphic to $\mathfrak{sl}_2$. Indeed,
$$
[X_1,X_2]=X_1\,,\qquad [X_1,X_3]=2X_2\,,\qquad [X_2,X_3]=X_3\,,
$$
and one obtains a distribution $\mathcal{D}^{V_{is}}$ on $\T\mathbb{R}^2$ of the form
 $$
 \mathcal{D}_x^{V_{is}}=\langle X_1(x),X_2(x),X_3(x)\rangle,\qquad \forall x\in \T\mathbb{R}^2,
 $$of rank three almost everywhere. Note that system \eqref{eq:Iso2DimLie} is related to the $t$-dependent vector field
 $$
 X_{is}=\nu^2(t)X_3+X_1
 $$
 and becomes a Lie system. 
 
 Moreover, $\T\mathbb{R}^2$ admits a natural symplectic form
$$
\omega=\d x_1\wedge \d v_1+\d x_2\wedge \d v_2\,,
$$
and $X_1,X_2,X_3$ are Hamiltonian vector fields relative to $\omega$ with Hamiltonian functions $h^1,h^2,h^3\in \Cinfty(\T\mathbb{R}^2)$ because $\Lie_{X_\alpha}\omega=0$ for $\alpha=1,2,3$ and $\T\mathbb{R}^2$ is simply connected, which implies that every closed one-form is exact. Additionally, $\d h^1\wedge \d h^2\wedge \d h^3\neq 0$.

Moreover, there exists another vector field on $\T\mathbb{R}^2$, namely 
$$
X_4=\sum_{i=1}^2\left(x_i\frac{\partial}{\partial x_i}+v_i\frac{\partial}{\partial v_i}\right),
$$ commuting with $X_1,X_2,X_3$. Hence, $X_1,\ldots,X_4$ span a Lie algebra $V_e$ isomorphic to the matrix Lie algebra of $2\times 2$ matrices. The vector field $X_4$ is not Hamiltonian relative to $\omega$ because $\Lie_{X_4}\omega=2\omega\neq 0$.  Let us prove that we can turn all vector fields $X_1,\ldots,X_4$ into $\bm\eta_{is}$-Hamiltonian relative to a two-contact form $\bm \eta_{is}$. 

 The vector fields $X_1,X_2,X_3,X_4$ are linearly independent at the manifold $\mathcal{O}_{is}$ of points in $\T \mathbb{R}^2$ where $v_2x_1-x_2v_1\neq 0$. Moreover, $X_1,X_2,X_3,X_4$ are the fundamental vector fields of a locally transitive linear Lie group action of $\mathrm{GL}_2$, the Lie group of invertible linear real automorphisms on $\mathbb{R}^2$, on the manifold $\mathcal{O}_{is}\subset \T\mathbb{R}^2\backslash\{(0,0,0,0)\}$. As proved in \cite{GLMV_19}, the space $\mathcal{O}_{is}$  is locally diffeomorphic to the Lie group $\mathrm{GL}_2$ in such a manner that $X_1,\ldots,X_4$ become mapped into a basis of right-invariant vector fields. Then, there exists, at least locally around a generic point, a Lie algebra $\langle Y_1,\ldots,Y_4\rangle $ of Lie symmetries of the vector fields of $V_e$, i.e. $[Y_i,X_j]=0$ for $i,j=1,\ldots,4$, that is isomorphic to $\mathfrak{gl}_2$ with $Y_1\wedge\ldots\wedge Y_4\neq 0$. In fact, consider
 $$
 Y_1=x_2\frac{\partial }{\partial x_1}+v_2\frac{\partial}{\partial v_1},\qquad Y_2=\frac 12 \sum_{i=1}^2(-1)^{i}\left(x_i\frac{\partial}{\partial  x_i}+v_i\frac{\partial}{\partial v_i}\right),\qquad Y_3=-x_1\frac{\partial}{\partial x_2}-v_1\frac{\partial}{\partial v_2}
 $$
 and $X_4=Y_4$. The vector fields $Y_1,\ldots,Y_4$ commute with $X_1,\ldots,X_4$, while
 $$
 [Y_1,Y_2]=-Y_1,\qquad [Y_1,Y_3]=-2Y_2,\qquad [Y_2,Y_3]=-Y_3.
 $$
 Let $\Upsilon_1,\ldots,\Upsilon_4$ be the dual one-forms to $Y_1,\ldots,Y_4$, namely
 $$
 \Upsilon^1=\frac{v_1\d x_1-x_1\d v_1}{v_1x_2-x_1v_2}\,,\qquad \Upsilon^2=\frac{v_2\d x_1-x_2\d v_1+v_1\d x_2-x_1\d v_2}{v_1x_2-x_1v_2}\,,\qquad \Upsilon^3=\frac{v_2 \d x_2-x_2\d v_2}{v_1x_2-x_1v_2} 
 $$
  and
 $$
 \Upsilon^4=\frac{\d (v_1x_2-x_1v_2)}{2(v_1x_2-x_1v_2)}.
 $$
 Then,
$$
\d \Upsilon^2=2\Upsilon^1\wedge\Upsilon^3\,,\qquad \d\Upsilon^4=0\,.
$$
Hence, $\bm\eta_{is}=\Upsilon^2\otimes e_1+\Upsilon^4\otimes e_2$ defines a two-contact form that is invariant relative to $X_1,\ldots,X_4$. Indeed, 
$$
(\Lie_{X_i}\eta_{is}^\alpha)(Y_j)=X_i(\eta_{is}^\alpha(Y_j)-\eta_{is}^\alpha([X_i,Y_j])=0,\qquad i=1,\ldots,4\,,\qquad \alpha=2,4\,,
$$
and $X_1,\ldots,X_4$ are locally $\bm\eta_{is}$-Hamiltonian relative to $\bm\eta_{is}$. Define $\bm h_i=-\iota_{X_i}\bm\eta_{is}$ for $i=1,2,3,4$, which read
$$
\bm h_1=\frac{-2v_2v_1e_1}{v_1x_2-x_1v_2}\,,\qquad \bm h_2=\left(1-\frac{2v_1x_2}{v_1x_2-x_1v_2}\right)e_1\,,\qquad \bm h_3=-\frac{2x_2x_1e_1}{v_1x_2-x_1v_2}\,,\qquad \bm h_4=-e_2\,.
$$
One has that the following non-vanishing commutation relations
$$
\{{\bm h}_1,\bm h_{2}\}=-\bm h_{1}, \qquad\{\bm h_{1}, \bm h_{3}\}=-2\bm h_{2}, \qquad \{\bm h_{2}, \bm h_{3}\}=-\bm h_{3}
$$
and ${\bm h}_1,\ldots,{\bm h}_4$ close a finite-dimensional Lie algebra of $\bm\eta_{is}$-Hamiltonian functions, which is isomorphic to $\mathfrak{gl}(2,\mathbb{R})$. Note that $(\mathcal{O}_{is}\subset \T\mathbb{R}^2,\bm \eta_{is},{\bm h_1}+\nu^2(t){\bm h}_3)$ is a two-contact Lie--Hamiltonian system for the two-contact Lie system \eqref{eq:Iso2DimLie}.  Indeed, $\langle {\bm h}_1,\ldots,{\bm h}_4\rangle$ and $\langle {\bm h}_1,\ldots,{\bm h}_3\rangle$ are  $\bm \eta_{is}$-Lie--Hamilton algebras for $X_{is}$. 

Note that the Reeb vector fields are $Y_2$ and $Y_4$, which are Lie symmetries of $X_1,X_2,X_3,X_4$. Hence, their $\bm\eta_{is}$-Hamiltonian two-functions are first integrals of $Y_2,Y_4$. Then, $(\mathcal{O}_{is},\bm \eta_{is},X_{is})$ is a projectable two-contact Lie system and projects onto a new  Lie system. This will be analysed in detail in forthcoming sections. Note that we do not really include $X_4$ in the Vessiot--Guldberg Lie algebra of $X_{is}$: it was just a tool to transform the initial Lie system into a two-contact one. 
\end{example}

To complete the presentation of $k$-contact Lie systems, let us give an example of n

\begin{example}\label{Ex:NonConHam}Consider the manifold $\R^5$ equipped with linear coordinates $\{q,z_1,z_2,p_1,p_2\}$. Then,  $\R^5$ has a natural two-contact form given by  $\bm\eta_J = (\d z_1 - p_1\,\d q)\otimes e_1+(\d z_2 - p_2\,\d q)\otimes e_2$. Its associated Reeb vector fields are $R_1 = \tparder{}{z_1}$ and $R_2 = \tparder{}{z_2}$. Then, $\mathbb{R}^5$ is diffeomorphic to the first jet bundle $J^1(\mathbb{R},\mathbb{R}\times \mathbb{R}^2)$ associated with the fibre bundle $(q,z_1,z_2)\in \mathbb{R}\times \mathbb{R}^2\mapsto q\in \mathbb{R}$  and $\bm \eta_J$ is, essentially, the natural two-contact form with the adapted variables $\{q,z_1,z_2,p_1,p_2\}$. 

Consider the vector fields on $\mathbb{R}^5$ given by
$$ X_1 = \parder{}{z_1}\,,\qquad X_2 = \parder{}{z_2}\,,\qquad X_3 = \parder{}{q}\,,\qquad X_4 = q\parder{}{q} - p_1\parder{}{p_1}- p_2 \parder{}{p_2}\,,
$$
$$
X_5=z_1\frac{\partial}{\partial z_1}+\frac 14 z_2\frac{\partial}{\partial z_2}+\frac 12\left(q\frac{\partial}{\partial q}+ p_1\frac{\partial}{\partial p_1}-\frac 12 p_2\frac{\partial}{\partial p_2}\right).$$
Note that $X_1\wedge\ldots\wedge X_5$ is different from zero almost everywhere in $\mathbb{R}^5$. 
The vector fields $X_1,\ldots,X_5$ are $\bm\eta_J$-Hamiltonian relative to $(\R^5,\bm\eta_J)$ with $\bm\eta_J$-Hamiltonian two-functions
$$ \bm h_1=-e_1\,,\qquad \bm h_2 = -e_2\,,\qquad \bm h_3 =p_1e_1+p_2e_2\,,\qquad\bm  h_4 = p_1qe_1+p_2qe_2\,,$$
$$
\bm h_5 = (-z_1+qp_1/2)e_1+(-z_2/4+qp_2/2)e_2,$$
and span a five-dimensional Vessiot--Guldberg  Lie algebra with commutation relations
$$ [X_1,X_2]=0\,,\qquad [X_1,X_3]=0\,,\qquad [X_1,X_4]=0\,,\qquad [X_1,X_5]=X_1,$$
$$
[X_2,X_3] = 0\,,\qquad [X_2,X_4] =0\,,\qquad [X_2,X_5]=\frac14 X_2\,,
$$
$$ [X_3,X_4] = X_3\,,\qquad [X_3,X_5]=\frac12 X_3\,,$$
$$\qquad[X_4,X_5]=0\,.$$
This allows us to define a two-contact Lie system on $\mathbb{R}^5$ relative to $\bm\eta_J$ given by $(\mathbb{R}^5,\bm\eta_J,X_J)$ with
\begin{equation}\label{eq:vector-field-example-nonconservative}
    X_J=\sum_{\alpha=1}^5b_\alpha(t)X_\alpha\,.
\end{equation}
where $b_1(t),\ldots,b_5(t)$ are any $t$-dependent functions. If $b_1(t),\ldots,b_5(t)$ are such that the vectors $(b_1(t),\ldots,b_5(t))$, with $t\in\mathbb{R}$, span $\mathbb{R}^5$, then $V^{X_J}=\langle X_1,\ldots,X_5\rangle$ and $\mathcal{D}^{X_J}_x =\T_x \mathbb{R}^5$ for  $x$ in an open dense subset of $\mathbb{R}^5$. 

Since the $\bm\eta_J$-Hamiltonian two-function of $X_5$ is not a first integral of the Reeb vector fields $R_1=X_1,R_2=X_2$, we have that $(\mathbb{R}^5,\bm \eta_J,X_J)$ is a not a projectable two-contact Lie system when $(b_1(t),\ldots,b_5(t))$, for $t\in \mathbb{R}$, span $\mathbb{R}^5$. Note also that $X_J$ is associated with the $t$-dependent $\bm\eta_J$-Hamiltonian two-function
$$
    \bm h = \sum_{\alpha = 1}^5 b_\alpha(t)\bm h_\alpha\,,
$$
namely each $(X_J)_t$ is the $\bm \eta_J$-Hamiltonian vector field related to $\bm h_t$ for every $t\in \mathbb{R}$. Finally, $\langle {\bm h}_1,\ldots,{\bm h}_5\rangle$ span an ${\bm \eta}_J$-Lie--Hamilton algebra for $X_J$. In fact, the non-vanishing commutation relations read
$$
\{\bm h_{1}, \bm h_{5}\}=-\bm h_{1}, \qquad \{\bm h_{2}, \bm h_{5}\}=-\frac{1}{4}\bm h_{2}, \qquad \{\bm h_{3}, \bm h_{4}\}=-\bm h_{3}, \qquad \{\bm h_{3},\bm h_{5}\}=-\frac{1}{2}\bm h_{3}.
$$
Then, $(\mathbb{R}^5,\bm \eta_J,$$
    \bm h = \sum_{\alpha = 1}^5 b_\alpha(t)\bm h_\alpha\,)
$ is a two-contact Lie--Hamiltonian system for the two-contact Lie system \eqref{eq:vector-field-example-nonconservative}.  
\finish
\end{example}
\section{ Generalised {\it t}-dependent constants of the motion for {\it k}-contact Lie systems}\label{Sec:Constants}

Given a $k$-contact Lie system, one obtains that a Vessiot--Guldberg Lie algebra of $\bm\eta$-Hamiltonian vector fields gives rise to a Lie algebra of $\bm \eta$-Hamiltonian $k$-functions, which in turn gives rise to a momentum map 
$
{\bf J}\colon M\to \mathfrak{g}^{*}\otimes \mathbb{R}^k
$. This mapping will be relevant so as to determine some properties of the initial $k$-contact Lie system. In particular, it is used to obtain generalised constants of motion \cite{CFR_13} for $k$-contact Lie systems. This will illustrate some of the potential uses of $k$-contact geometry to analyse $k$-contact Lie systems. Let us start by a motivating example.

\begin{example}Let us go back to Example \ref{Ex:TDepIsotropic} of the $t$-dependent frequency isotropic oscillator on $\mathcal{O}_{is}$. Consider the presymplectic form $\langle \d \bm \eta_{is},e^1\rangle$. This gives rise to a Poisson bracket on the space of its admissible functions, i.e. functions  $f\in \Cinfty( \mathcal{O}_{is})$ such that $\d f$ belongs to the image of $\d \langle \bm \eta_{is},e^1\rangle$.   Since $X_1,\ldots,X_4$ are invariant relative to the Reeb vector fields, they become Hamiltonian with respect to $\d\langle \bm \eta_{is},e^1\rangle$. This latter fact will be shown in detail in Proposition \ref{prop:ReductSympl} and, for the time being, it is enough to verify this fact by a short calculation. Note that 
$$
h_\alpha^1=\langle \bm h_\alpha,e^1\rangle,\qquad \alpha=1,2,3,4,
$$
are Hamiltonian functions for $X_1,\ldots,X_4$, respectively, and span a Lie algebra of admissible functions isomorphic to $\mathfrak{sl}_2$. Then, $h_1^1h_3^1-(h_2^1)^2$ Poisson commutes with $h_1^1,h_2^1,h_3^1$  and becomes a constant of motion for $X_{is}$.
    
\end{example}

Let us generalise the properties of the above example for every projective $k$-contact Lie system. It is worth noting that if $\bm f,\bm g\in\Cinfty_{\bm \eta}(M,\mathbb{R}^k)\cap \ker \bm R$, then
\begin{multline}\label{eq:RelkConPre}
   \{ 
   \bm  f,\bm g\}_{\bm \eta}=\bm \eta([X_{\bm f},X_{\bm g}])=X_{\bm f}\iota_{X_{\bm g}}\bm \eta-{X_{\bm g}}\iota_{X_{\bm f}}\bm \eta-\d \bm \eta(X_{\bm f},X_{\bm g})\\=-\iota_{X_{\bm f}}\iota_{X_{\bm g}}\d \bm \eta +\iota_{X_{\bm g}}\iota_{X_{\bm f}}\d \bm \eta -\d \bm \eta(X_{\bm f},X_{\bm g})=\d\bm\eta(X_{\bm f},X_{\bm g}).
    \end{multline}
This explains the relation of the bracket of functions in $\ker \bm R$ and the bracket of presymplectic forms defined in the following proposition. 

\begin{proposition}\label{prop:ReductSympl}
If $\bm \eta$ is a $k$-contact form and $\theta\in \mathbb{R}^{k*}$, then $\omega=\langle\d \bm\eta,\theta\rangle$ is a presymplectic form. Every Lie algebra of $k$-contact Hamiltonian $k$-functions $\mathfrak{W}=\langle \bm h_1,\ldots,\bm h_r\rangle$ belonging to $\ker \bm R$ gives rise to a Lie algebra of Hamiltonian functions $\mathfrak{W}^\theta=\langle \bm \langle h_1,\theta\rangle ,\ldots,\langle h_r,\theta\rangle \rangle$ relative to  $\omega$. In particular, there exists a Lie algebra surjection
$$
\bm f\in \mathfrak{W}\mapsto { f}^\theta=\langle \bm f,\theta\rangle\in \mathfrak{W}^\theta
$$
If $C$ is an admissible function of $\langle \d \bm \eta,\theta\rangle$ and Poisson commutes with the elements of $\mathfrak{W}^\theta$ relative to the Poisson bracket induced by $\omega$, then $C$ is a constant of motion of every Lie system $X$ admitting an ${\bm \eta}$-Lie--Hamilton algebra $\mathfrak{W}$. 
\end{proposition}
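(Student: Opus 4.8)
The plan is to push everything through the pairing with $\theta=\sum_{\alpha=1}^k\theta_\alpha e^\alpha$, so that $\omega=\langle\d\bm\eta,\theta\rangle=\sum_{\alpha=1}^k\theta_\alpha\,\d\eta^\alpha$, and to exploit that on $\ker\bm R$ the $k$-contact Hamiltonian equations \eqref{eq:ConHam} simplify drastically. First, $\omega$ is closed since $\d\omega=\sum_\alpha\theta_\alpha\,\d\d\eta^\alpha=0$; moreover $\iota_{R_\beta}\omega=\sum_\alpha\theta_\alpha\iota_{R_\beta}\d\eta^\alpha=0$ by Theorem \ref{thm:k-contact-Reeb}, so $\langle R_1,\dotsc,R_k\rangle\subseteq\ker\omega$, and since $\T M=\ker\bm\eta\oplus\langle R_1,\dotsc,R_k\rangle$ the rank of $\omega$ is governed by $\omega|_{\ker\bm\eta}$, which is of locally constant rank; hence $\omega$ is presymplectic (restricting, if needed, to the open dense set where this rank is locally constant).

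Next I would observe that for $\bm h=\sum_\alpha h^\alpha e_\alpha\in\mathfrak W\subseteq\ker\bm R$ one has $R_\beta h^\alpha=0$ for all $\alpha,\beta$, so the second identity in \eqref{eq:ConHam} becomes $\iota_{X_{\bm h}}\d\eta^\alpha=\d h^\alpha$; contracting $X_{\bm h}$ with $\omega$ then yields $\iota_{X_{\bm h}}\omega=\sum_\alpha\theta_\alpha\,\d h^\alpha=\d h^\theta$, where $h^\theta:=\langle\bm h,\theta\rangle$. Thus each $h^\theta$ is an admissible function of $\omega$ with Hamiltonian vector field $X_{\bm h}$. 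The ambiguity of Hamiltonian vector fields for the presymplectic $\omega$ (they are determined only up to a section of $\ker\omega$) is immaterial, because the Poisson bracket $\{f_1,f_2\}_\omega=\omega(X_{f_1},X_{f_2})$ of admissible functions does not see it. Applying \eqref{eq:RelkConPre} --- which is valid precisely because $\mathfrak W\subseteq\ker\bm R$ --- gives, for $\bm f,\bm g\in\mathfrak W$,
\[
\{f^\theta,g^\theta\}_\omega=\omega(X_{\bm f},X_{\bm g})=\big\langle\d\bm\eta(X_{\bm f},X_{\bm g}),\theta\big\rangle=\big\langle\{\bm f,\bm g\}_{\bm\eta},\theta\big\rangle .
\]
Since $\mathfrak W$ is a Lie algebra, $\{\bm h_i,\bm h_j\}_{\bm\eta}\in\mathfrak W$ and the right-hand side lies in $\mathfrak W^\theta$; hence $\mathfrak W^\theta$ is closed under $\{\cdot,\cdot\}_\omega$, and $\bm f\mapsto f^\theta$ is a Lie algebra morphism $\mathfrak W\to\mathfrak W^\theta$, surjective by the very definition of $\mathfrak W^\theta$ as the linear span of the $\langle\bm h_i,\theta\rangle$.

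For the last assertion, let $X$ admit $\mathfrak W$ as an $\bm\eta$-Lie--Hamilton algebra, so $X_t=X_{\bm h_t}$ with $\bm h_t=\sum_\alpha c_\alpha(t)\bm h_\alpha\in\mathfrak W$, whence $h_t^\theta=\sum_\alpha c_\alpha(t)\langle\bm h_\alpha,\theta\rangle\in\mathfrak W^\theta$. If $C$ is admissible for $\omega$, pick $X_C$ with $\iota_{X_C}\omega=\d C$; then
\[
X_tC=\iota_{X_{\bm h_t}}\d C=\iota_{X_{\bm h_t}}\iota_{X_C}\omega=\omega(X_C,X_{\bm h_t})=\pm\{C,h_t^\theta\}_\omega=0
\]
for all $t$, since $C$ Poisson-commutes with $\mathfrak W^\theta$. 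Because $C$ is $t$-independent, $\tfrac{\d}{\d t}\big(C\circ x(t)\big)=(X_tC)\circ x(t)=0$ along every solution $x(t)$ of $X$, i.e.\ $C$ is a constant of motion of $X$. The only steps requiring care are of a routine nature: verifying that $\omega$ has constant rank and that the Poisson bracket on admissible functions is well defined for the presymplectic (not symplectic) $\omega$, together with bookkeeping of the sign convention in $\iota_X\omega=\pm\d h$; the substance of the argument is the direct manipulation of \eqref{eq:ConHam} and \eqref{eq:RelkConPre} above.
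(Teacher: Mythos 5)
Your proposal is correct and follows essentially the same route as the paper: pairing with $\theta$, using that $R_\beta h^\alpha=0$ reduces \eqref{eq:ConHam} to $\iota_{X_{\bm h}}\d\eta^\alpha=\d h^\alpha$ so that $\iota_{X_{\bm h}}\omega=\d h^\theta$, invoking \eqref{eq:RelkConPre} to get $\{f^\theta,g^\theta\}_\omega=\langle\{\bm f,\bm g\}_{\bm\eta},\theta\rangle$, and concluding the constant-of-motion claim by the standard presymplectic computation with the $t$-dependent Hamiltonian $h^\theta_t$. The only deviation is your digression on the rank of $\omega$: the claim that $\omega|_{\ker\bm\eta}$ has locally constant rank is neither justified nor needed, since the paper takes ``presymplectic'' to mean merely closed and explicitly notes in a footnote that the rank of $\langle\d\bm\eta,\theta\rangle$ may fail to be constant.
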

\begin{proof} Since $\d\bm\eta$ is closed, so is $\langle\d\bm\eta,\theta\rangle$, which becomes presymplectic\footnote{Note that its rank may not be constant.}. Moreover, the functions $h^\theta_\alpha=\langle \bm h_\alpha,\theta\rangle$ become Hamiltonian functions for $\langle \d\bm \eta,\theta\rangle$ because $R_\alpha\bm h=0$. Indeed,
$$
\iota_{X_{\bm f}}\d \bm \eta=\d \bm f\Rightarrow \iota_{X_{\bm f}}\d \langle \bm \eta,\theta\rangle=\d\langle \bm f,\theta\rangle. 
$$Let us show that expression \eqref{eq:RelkConPre} shows that  there exists a Lie algebra surjection from $\mathfrak{M}$ onto $\mathfrak{M}^\theta$ 
and $X_{\bm f}$ is a presymplectic Hamiltonian vector field of $  f^\theta$. We consider $X_{ f^\theta}=X_{\bm f}$, although ${  f}^\theta$ does not determine a Hamiltonian vector field relative to $\d\bm\langle \bm\eta,\theta\rangle$ uniquely. More in detail, 
$$
\langle\{\bm f,\bm g\}_{\bm \eta},\theta\rangle=\d \langle \bm \eta,\theta\rangle(X_{\bm f},X_{\bm g})=\iota_{X_{\bm g}}\d    f^\theta=\d \langle \bm \eta,\theta\rangle(X_{ f^\theta},X_{\bm g})=\d \langle \bm \eta,\theta\rangle(X_{{ f}^\theta},X_{{ g}^\theta})=\{{  f}^\theta,{  g}^\theta\}_{\omega}.
$$
The fact that $C$ is a constant of motion of $X$ follows from standard presymplectic geometry and the fact that $X$ admits a $t$-dependent Hamiltonian of the form $h=\sum_{\alpha=1}^rb_\alpha(t)\langle {\bm h}_\alpha,\theta\rangle$ for certain $t$-dependent functions $b_1(t),\ldots,b_r(t)$. 
    
\end{proof}

\begin{theorem} Consider the Lie algebra $\Cinfty_{\bm \eta}(M,\mathbb{R}^k)$ relative to $\{\cdot,\cdot\}_{\bm\eta}$ and the space $\Cinfty_{\bm \eta,\theta}(M)$ of functions of the form $   \langle \bm f,\theta\rangle$ for a certain $\bm f\in \Cinfty_{\bm \eta}(M,\mathbb{R}^k)$ and a fixed $\theta\in \mathbb{R}^{k*}$. 
The morphism 
\begin{equation}
\label{eq:Poisson_algebra_morphism}
\bm f \in \Cinfty_{\bm\eta}(M,\R^k)\,\cap \, \ker \bm R \mapsto f^\theta
=-\langle\bm f,\theta\rangle \in \Cinfty_{\bm\eta,\theta}(M)\,\cap \, \ker \bm R 
\end{equation}
is a Poisson algebra morphism relative to the product $\bm f\star \bm g =\sum_{\alpha=1}^r(f^\alpha g^\alpha)   e_{\alpha}$ and the bracket $\{\cdot,\cdot\}_{\bm \eta}$ in $\Cinfty_{\bm\eta}(M,\R^k)\,\cap \, \ker \bm R $ and the Poisson bracket of $\langle \d\bm \eta,\theta\rangle$ and the standard multiplication of functions in $\Cinfty_{\bm\eta,\theta}(M)\,\cap \, \ker \bm R $.
\end{theorem}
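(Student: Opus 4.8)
The plan is to reduce the statement to the presymplectic picture of Proposition \ref{prop:ReductSympl} and to exploit the fact that, on $\ker\bm R$, every Reeb correction term occurring in \eqref{eq:ConHam}, \eqref{eq:Lie_bracket_Ham} and \eqref{eq:RelkConPre} vanishes. Throughout I write $\omega=\langle\d\bm\eta,\theta\rangle$, which is a presymplectic form by Proposition \ref{prop:ReductSympl}.

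First I would pin down the relevant Hamiltonian vector fields. If $\bm f\in\Cinfty_{\bm\eta}(M,\R^k)\cap\ker\bm R$, then $R_\beta f^\alpha=0$ for all $\alpha,\beta$, so \eqref{eq:ConHam} collapses to $\iota_{X_{\bm f}}\d\eta^\alpha=\d f^\alpha$; contracting with $\theta$ gives $\iota_{X_{\bm f}}\omega=\d\langle\bm f,\theta\rangle$. Hence $\langle\bm f,\theta\rangle$, equivalently $f^\theta$, is an admissible function of $\omega$ whose Hamiltonian vector field can be taken to be $X_{\bm f}$ up to sign, so the target bracket $\{\cdot,\cdot\}_\omega$ is defined on the image of $\phi$. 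I would then record that both sides are genuine Poisson algebras. The bracket $\{\cdot,\cdot\}_{\bm\eta}$ is a Lie bracket by Corollary \ref{Cor:MorLieBrackett}, and $\Cinfty_{\bm\eta}(M,\R^k)\cap\ker\bm R$ is a Lie subalgebra: by \eqref{eq:Lie_bracket_Ham} one has $\{\bm f,\bm g\}_{\bm\eta}=-\Lie_{X_{\bm f}}\bm g$ on $\ker\bm R$, and $R_\beta(X_{\bm f}g^\alpha)=X_{\bm f}(R_\beta g^\alpha)+([R_\beta,X_{\bm f}]g^\alpha)=0$ since $[R_\beta,X_{\bm f}]=X_{R_\beta\bm f}=0$ by Proposition \ref{Prop:Calculation}(iii). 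On $\ker\bm R$ the product $\star$ is the ordinary componentwise product, and the Leibniz identity $\{\bm f,\bm g\star\bm h\}_{\bm\eta}=\{\bm f,\bm g\}_{\bm\eta}\star\bm h+\bm g\star\{\bm f,\bm h\}_{\bm\eta}$ follows from $\{\bm f,\bm g\}_{\bm\eta}^\alpha=-X_{\bm f}g^\alpha$ and the Leibniz rule for the vector field $X_{\bm f}$ in each component. On the target, the standard facts that products and Poisson brackets of admissible functions of a presymplectic form are again admissible, together with the same vanishing of Reeb corrections (which keeps us inside $\ker\bm R$), give that $\Cinfty_{\bm\eta,\theta}(M)\cap\ker\bm R$ is a Poisson algebra.

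Next I would verify that $\phi$ is a morphism. Pairing the identity \eqref{eq:RelkConPre}, which is valid precisely on $\ker\bm R$, with $\theta$ gives $\langle\{\bm f,\bm g\}_{\bm\eta},\theta\rangle=\omega(X_{\bm f},X_{\bm g})=\{\langle\bm f,\theta\rangle,\langle\bm g,\theta\rangle\}_\omega$, where the last equality uses the identification of Hamiltonian vector fields from the previous step; together with $f^\theta=-\langle\bm f,\theta\rangle$ this yields $\phi(\{\bm f,\bm g\}_{\bm\eta})=\{\phi(\bm f),\phi(\bm g)\}_\omega$, the signs being read consistently with the conventions of Proposition \ref{prop:ReductSympl} and of the isomorphism \eqref{eq:IsomorLie}. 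For the multiplicative part, expanding $\theta$ in the dual basis $\{e^1,\dots,e^k\}$ and using $(\bm f\star\bm g)^\alpha=f^\alpha g^\alpha$ gives $\phi(\bm f\star\bm g)=\phi(\bm f)\,\phi(\bm g)$ by a one-line componentwise computation, while surjectivity is immediate from the definition of $\Cinfty_{\bm\eta,\theta}(M)$. These two compatibilities are exactly the statement that $\phi$ is a Poisson algebra morphism.

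The main obstacle is the second step: one has to check carefully that restricting to $\ker\bm R$ really does make every Reeb correction drop out, so that $X_{\bm f}$ is a bona fide presymplectic Hamiltonian vector field of $\langle\bm f,\theta\rangle$ for $\omega$, and that the resulting product and bracket on $\Cinfty_{\bm\eta,\theta}(M)\cap\ker\bm R$ are well defined (this is where one invokes the stability of admissible functions of a presymplectic form under products and Poisson brackets, and the fact that all the operations involved preserve $\ker\bm R$). Once this is in place, the bracket compatibility is immediate from \eqref{eq:RelkConPre} and the multiplicativity is purely componentwise; the only residual care needed is bookkeeping the sign in $f^\theta=-\langle\bm f,\theta\rangle$.
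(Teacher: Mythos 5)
Your proposal follows essentially the same route as the paper: the Leibniz identity comes from $\{\bm f,\bm g\}_{\bm\eta}=-X_{\bm f}\bm g$ on $\ker\bm R$ together with the componentwise Leibniz rule of the vector field $X_{\bm f}$, and the compatibility with the presymplectic bracket of $\omega=\langle\d\bm\eta,\theta\rangle$ is exactly the content of Proposition \ref{prop:ReductSympl} and identity \eqref{eq:RelkConPre}, which the paper's proof invokes implicitly; your additional checks (admissibility of $\langle\bm f,\theta\rangle$, closure of $\ker\bm R$ under the bracket via $[R_\beta,X_{\bm f}]=X_{R_\beta\bm f}=0$) only make explicit what the paper leaves to the reader. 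The one step to treat with care is the multiplicative part: the claimed one-line computation $\phi(\bm f\star\bm g)=\phi(\bm f)\,\phi(\bm g)$ does not hold verbatim for a generic $\theta=\sum_{\alpha}\theta_\alpha e^\alpha$ (cross terms appear, and the minus sign in $f^\theta=-\langle\bm f,\theta\rangle$ also obstructs it), but the paper's own proof asserts the $\mathbb{R}$-algebra morphism property just as tersely, so on this point your argument is no less complete than the original.
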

\begin{proof}

To prove that $\Cinfty_{\bm\eta}(M,\R^k)\,\cap \, \ker \bm R$ is a Poisson algebra, take $\bm f,\bm g,\bm h$ belonging to $\ker \bm R$ and see that
\begin{align}
\{\bm f, \bm g\star\bm h\} &= -X_{\bm f}\left(\sum_{\alpha=1}^k g^\alpha h^\alpha  e_{\alpha}\right) =-  \sum_{\alpha=1}^k X_{\bm f}\left(g^\alpha\right)h^\alpha   e_{\alpha}
 -\sum_{\beta=1}^k g^\beta X_{\bm f}(h^\beta)  e_{\beta}= \notag\\
&=\{\bm f,\bm g\}\star \bm h+\bm g\star\{\bm f,\bm h\}.
\end{align}
Moreover, since $\Cinfty_{\bm \eta,\theta}(M) \cap \ker \bm R$ is  an $\mathbb{R}$-algebra and the morphism \eqref{eq:Poisson_algebra_morphism} is an $\mathbb{R}$-algebra morphism, the morphism is a Poisson algebra morphism. 
\end{proof}

One important thing about $k$-contact Lie systems is the determination of regions of the manifold that are invariant relative to its dynamics. Let us provide a first result about this.

\begin{proposition} Let $X$ be a $k$-contact Lie system with an $r$-dimensional Vessiot--Guldberg Lie algebra $V=\langle X_1,\ldots,X_r\rangle$ of $\bm \eta$-Hamiltonian vector fields with $\bm \eta$-Hamiltonian $k$-functions ${\bm h}_1,\ldots,{\bm h}_r$ for $X_1,\ldots,X_r$, respectively. Let $\mathfrak{W}=\langle {\rm h}_1,\ldots,{\bm h}_r\rangle$ be a Lie--Hamilton algebra of $\bm\eta$-Hamiltonian $k$-functions isomorphic to an abstract $r$-dimensional Lie algebra $\mathfrak{g}$. Then, one obtains a map ${\bm J}:x\in M\mapsto \sum_{\alpha=1}^r{\bm h}_{\alpha}(x)v^\alpha \in \mathfrak{g}^*\otimes \mathbb{R}^k$, where $\{v^1,\ldots,v^r\}$ is a basis for $\mathfrak{g}^*$. For every $\theta\in \mathbb{R}^{k*}$ such that $R_{{\bm h}_1},\ldots,R_{{\bm h}_r}$ are tangent to the zero level of $ J^\theta=\langle {\bm J},\theta\rangle$, which is assumed to be a submanifold, one has that $({J}^\theta)^{-1}(0)$ is an invariant subset of the dynamics of the system, provided it is a submanifold.
    
\end{proposition}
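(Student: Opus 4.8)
The plan is to show that each vector field $X_\beta=X_{\bm h_\beta}$ of the Vessiot--Guldberg Lie algebra $V$ is tangent to the level set $Z:=(J^\theta)^{-1}(0)$. Since, by the Lie Theorem, $X_t=\sum_{\beta=1}^r b_\beta(t)X_\beta$ for suitable $t$-dependent functions $b_\beta$, this gives at once that $X_t$ is tangent to $Z$ for every $t$, hence that integral curves of $X$ through a point of $Z$ stay in $Z$. Write $h^\theta_\alpha:=\langle\bm h_\alpha,\theta\rangle\in\Cinfty(M)$; in the basis $\{v^1,\ldots,v^r\}$ of $\mathfrak g^*$ the components of $J^\theta$ are exactly $h^\theta_1,\ldots,h^\theta_r$, so $Z=\{x\in M: h^\theta_1(x)=\cdots=h^\theta_r(x)=0\}$. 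Since $Z$ is, by assumption, a submanifold cut out by these functions, tangency of a vector field $Y$ to $Z$ is detected by $Y(h^\theta_\alpha)|_Z=0$ for $\alpha=1,\ldots,r$.

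The key step is a computation of $X_\beta\bm h_\alpha$. Applying \eqref{eq:Lie_bracket_Ham} with $\bm h_1=\bm h_\beta$ and $\bm h_2=\bm h_\alpha$ yields
\begin{equation*}
X_\beta\bm h_\alpha=\Lie_{X_{\bm h_\beta}}\bm h_\alpha=-\{\bm h_\beta,\bm h_\alpha\}_{\bm\eta}-R_{\bm h_\alpha}\bm h_\beta,
\end{equation*}
where $R_{\bm h_\alpha}=\sum_{\gamma=1}^k h_\alpha^\gamma R_\gamma$ is the Reeb derivation of $\bm h_\alpha$. Pairing with $\theta$ and using that $\langle R_{\bm h_\alpha}\bm h_\beta,\theta\rangle=R_{\bm h_\alpha}\langle\bm h_\beta,\theta\rangle=R_{\bm h_\alpha}(h^\theta_\beta)$ (a componentwise check, $R_{\bm h_\alpha}$ acting as a derivation and $\theta$ being constant), I get
\begin{equation*}
X_\beta(h^\theta_\alpha)=-\langle\{\bm h_\beta,\bm h_\alpha\}_{\bm\eta},\theta\rangle-R_{\bm h_\alpha}(h^\theta_\beta),\qquad \alpha,\beta=1,\ldots,r.
\end{equation*}

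It then remains to see that both terms on the right vanish on $Z$. For the first term, since $\mathfrak W=\langle\bm h_1,\ldots,\bm h_r\rangle$ is a Lie algebra under $\{\cdot,\cdot\}_{\bm\eta}$ isomorphic to $\mathfrak g$, one has $\{\bm h_\beta,\bm h_\alpha\}_{\bm\eta}=\sum_{\gamma=1}^r c^\gamma_{\beta\alpha}\bm h_\gamma$ with $c^\gamma_{\beta\alpha}$ the structure constants of $\mathfrak g$, so $\langle\{\bm h_\beta,\bm h_\alpha\}_{\bm\eta},\theta\rangle=\sum_{\gamma=1}^r c^\gamma_{\beta\alpha}h^\theta_\gamma$ vanishes identically on $Z$. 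For the second term, the hypothesis is exactly that each $R_{\bm h_\alpha}$ is tangent to $Z$, and $h^\theta_\beta$ vanishes on $Z$ by construction, so $R_{\bm h_\alpha}(h^\theta_\beta)|_Z=0$. Hence $X_\beta(h^\theta_\alpha)|_Z=0$ for all $\alpha,\beta$, every $X_\beta$ is tangent to $Z$, and therefore so is $X_t$ for all $t$; thus $Z$ is an invariant subset of the dynamics.

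The conceptual content is that the two hypotheses enter in a complementary way: closure of $\mathfrak W$ under $\{\cdot,\cdot\}_{\bm\eta}$ annihilates the bracket term on $Z$, while the Reeb-tangency assumption annihilates the ``dissipative'' term $R_{\bm h_\alpha}\bm h_\beta$. The only care needed is bookkeeping with the $\R^k$-valued functions (notably the compatibility $\langle R_{\bm h_\alpha}\bm h_\beta,\theta\rangle=R_{\bm h_\alpha}(\langle\bm h_\beta,\theta\rangle)$, and the identity for $X_\beta\bm h_\alpha$ which is the real workhorse) and the implicit regularity of $Z$ as a level set, used to pass between ``tangent to $Z$'' and vanishing of the action on the defining functions $h^\theta_1,\ldots,h^\theta_r$; I expect this last point to be the only place where a careless argument could slip, but it is covered by the standing assumption that $Z$ is a submanifold.
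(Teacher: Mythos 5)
Your proof is correct and follows essentially the same route as the paper: both expand $X_{\bm h_\beta}\bm h_\alpha$ via the identity $X_{\bm h_\beta}\bm h_\alpha=-\{\bm h_\beta,\bm h_\alpha\}_{\bm\eta}-R_{\bm h_\alpha}\bm h_\beta$ coming from \eqref{eq:Lie_bracket_Ham}, kill the bracket term on $(J^\theta)^{-1}(0)$ using closure of $\mathfrak W$, and kill the Reeb term using the tangency hypothesis. If anything, your write-up is slightly cleaner in that it uses the hypothesis exactly as stated (tangency of the Reeb derivations $R_{\bm h_\alpha}$) and makes explicit the passage from tangency of each generator $X_\beta$ to invariance under the $t$-dependent flow, points the paper treats more tersely.
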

\begin{proof} Note that $X_{\bm h}=\sum_{\alpha=1}^kb_\alpha(t)X_{{\bm h}_\alpha}$ and
$$
\frac{\d{\bm J}^\theta}{\d t}=\frac{\partial {\bm J}^\theta}{\partial t}+X_{\bm h} {\bm J}^\theta=\!\sum_{\alpha,\beta=1}^kb_\alpha(t)\langle X_{{\bm h}_\alpha}{\bm h}_\beta,\theta\rangle =-\sum_{\alpha,\beta=1}^kb_\beta(t)\langle\{{\bm h}_\alpha,{\bm h_\beta}\}+R_{{\bm h}_\beta}{\bm h}_\alpha,\theta\rangle.
$$
Recall that ${\bm h}_1,\ldots, {\bm h}_r$ close a Lie algebra of functions, namely $\{{\bm h}_\alpha,{\bm h}_\beta\} =\sum_{\gamma=1}^rc_{\alpha\beta}\,^\gamma {\bm h}_\gamma$ for certain constants $c_{\alpha\beta}\,^\gamma$, and one has 
$$
\langle R_{{\bm h}_\beta}{\bm h}_\alpha, \theta\rangle=\sum_{\mu=1}^kh_\beta^\mu R_\mu \langle{\bm h}_\alpha,\theta\rangle.
$$
Take into account that $\langle {\bm J},\theta\rangle=\sum_{\alpha=1}^r\langle {\bm h }_\alpha v^\alpha,\theta\rangle=\sum_{\alpha=1}^r\langle {\bm h}_\alpha,\theta\rangle v^\alpha=0$ and $R_1,\ldots,R_k$ are tangent to the level set $({  J}^\theta)^{-1}(0)$. Hence, on such a submanifold,  $\langle {\bm h}_\alpha,\theta\rangle=0$ and $R_{{\bm h}_\beta}\langle {\bm h}_\alpha,\theta\rangle=0$ for $\alpha,\beta=1,\ldots,r$.  Thus, $X_{\bm h}$ is tangent to the zero level set of ${  J}^\theta$.

\end{proof}

\begin{example} Consider Example \ref{Ex:NonConHam} and the $t$-dependent vector field $X_J=\sum_{\alpha=3}^5b_\alpha(t)X_\alpha$. Assume $\theta=e_2$. One has the mapping
$$
{J}^\theta=\langle\bm J,\theta\rangle=(p_2,p_2 q,-z_2/4+qp_2/2).
$$
Then $({ J}^\theta)^{-1}(0)$ is the submanifold given by $p_2=0$ and $z_2=0$.  Note that
$$
R_{{\bm h}_3}= p_1\frac{\partial}{\partial z_1}+p_2\frac{\partial}{\partial z_2},\quad R_{{\bm h}_4}= p_1q\frac{\partial}{\partial z_1}+p_2q\frac{\partial}{\partial z_2},\quad R_{{\bm h}_5}= \left(-z_1 +\frac{qp_1}{2}\right)\frac{\partial}{\partial z_1}+\left(-\frac{z_2}{4}+\frac{qp_2}{2}\right)\frac{\partial}{\partial z_2}
$$
and $R_{{\bm h}_\beta}\langle \bm h_\alpha,\theta\rangle=0$ for $\alpha,\beta=3,4,5$ on $(J^\theta)^{-1}(0)$. Then, $({  J}^\theta)^{-1}(0)$ is an invariant submanifold of the dynamics of the system.
\end{example}

Let us now describe a theory to derive  $t$-dependent constants of motion for $k$-contact Lie systems via $k$-contact Lie--Hamilton algebras. The main problem to extend the results of Lie--Hamilton systems \cite{BCHLS13} to this new realm is the fact that the Lie bracket on $\mathfrak{W}$ is not a Poisson bracket and describing the variation of a $t$-dependent function in time does not depend only on the Lie--Hamilton algebra, but it also depends on the action of the Reeb vector fields on them, which can be difficult to analyse. A possible solution is given below.

\begin{proposition}\label{Prop:ConsMotkCon} Let $(M,\bm \eta,X)$ be a $k$-contact Lie system such that the Reeb derivations of the elements of an ${\bm \eta}$-Lie--Hamilton algebra leave invariant the ${\bm \eta}$-Lie--Hamilton algebra, namely  $R_{{\bm h}_\alpha}{\bm h}_{\beta}=\sum_{\gamma=1}^r\lambda_{\alpha\beta}\,^\gamma {\bm h}_{\gamma}$ for certain constants $\lambda_{\alpha\beta}\,^\gamma$ with $\alpha,\beta,\gamma=1,\ldots,r$. Then, the $t$-dependent constants of motion of $X$ of the form $I_\theta=\langle \sum_{\alpha=1}^rf^\alpha(t){\bm h}_{\alpha},\theta\rangle$ for some $\theta\in \mathbb{R}^{k*}$ and $t$-dependent functions $f_1(t),\ldots,f_r(t)$ are solutions to
\begin{equation}\label{eq:Final}
\frac{df^\alpha}{dt}=-\sum_{\nu,\beta=1}^rf^\nu b_\beta [c_{\nu\beta}\,^\alpha-\lambda_{\nu\beta}\,^\alpha],
\end{equation}
if $\langle h_1^\theta,\ldots,h_r^\theta\rangle$ are linearly independent.
\end{proposition}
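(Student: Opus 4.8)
The plan is to compute the total time derivative of $I_\theta$ along the flow of $X$ and set it to zero. Recall that $X$ is associated with a $t$-dependent $\bm\eta$-Hamiltonian $k$-function $\bm h = \sum_{\beta=1}^r b_\beta(t)\bm h_\beta$, so that $X = X_{\bm h} = \sum_{\beta=1}^r b_\beta(t) X_{\bm h_\beta}$. The function $I_\theta = \langle \sum_{\alpha=1}^r f^\alpha(t)\bm h_\alpha, \theta\rangle$ is a $t$-dependent function on $M$, so
\[
\frac{\d I_\theta}{\d t} = \frac{\partial I_\theta}{\partial t} + X_{\bm h} I_\theta = \sum_{\alpha=1}^r \frac{\d f^\alpha}{\d t}\langle \bm h_\alpha,\theta\rangle + \sum_{\alpha,\beta=1}^r b_\beta(t) f^\alpha(t)\, \langle X_{\bm h_\beta}\bm h_\alpha,\theta\rangle\,.
\]

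The key step is to rewrite $X_{\bm h_\beta}\bm h_\alpha$ using formula \eqref{eq:Lie_bracket_Ham}, which gives $X_{\bm h_\beta}\bm h_\alpha = -\{\bm h_\beta,\bm h_\alpha\}_{\bm\eta} - R_{\bm h_\alpha}\bm h_\beta$. By antisymmetry of the bracket this is $\{\bm h_\alpha,\bm h_\beta\}_{\bm\eta} - R_{\bm h_\alpha}\bm h_\beta$. Now I invoke the two closure hypotheses: $\{\bm h_\alpha,\bm h_\beta\}_{\bm\eta} = \sum_{\gamma=1}^r c_{\alpha\beta}{}^\gamma \bm h_\gamma$ because $\mathfrak{W}$ is an $\bm\eta$-Lie--Hamilton algebra, and $R_{\bm h_\alpha}\bm h_\beta = \sum_{\gamma=1}^r \lambda_{\alpha\beta}{}^\gamma \bm h_\gamma$ by the standing assumption of the proposition. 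Substituting,
\[
X_{\bm h_\beta}\bm h_\alpha = \sum_{\gamma=1}^r \left(c_{\alpha\beta}{}^\gamma - \lambda_{\alpha\beta}{}^\gamma\right)\bm h_\gamma\,.
\]
Pairing with $\theta$ and plugging back, the vanishing of $\d I_\theta/\d t$ becomes $\sum_{\gamma=1}^r \big[ \tfrac{\d f^\gamma}{\d t} + \sum_{\alpha,\beta=1}^r b_\beta f^\alpha (c_{\alpha\beta}{}^\gamma - \lambda_{\alpha\beta}{}^\gamma)\big]\langle \bm h_\gamma,\theta\rangle = 0$ (after relabelling the free index to $\gamma$).

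The final step uses the linear independence of $h_1^\theta=\langle\bm h_1,\theta\rangle,\ldots,h_r^\theta=\langle \bm h_r,\theta\rangle$: since these $r$ functions are linearly independent over $\R$, the vanishing of the linear combination forces each coefficient to vanish, i.e.
\[
\frac{\d f^\gamma}{\d t} = -\sum_{\alpha,\beta=1}^r b_\beta\, f^\alpha\,\big(c_{\alpha\beta}{}^\gamma - \lambda_{\alpha\beta}{}^\gamma\big)\,,
\]
which, after renaming $\alpha\mapsto\nu$ to match the statement, is exactly \eqref{eq:Final}. Conversely, if $f^1,\ldots,f^r$ solve this ODE system then the same computation run backwards shows $\d I_\theta/\d t = 0$, so $I_\theta$ is a $t$-dependent constant of motion. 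The main subtlety I anticipate is being careful with the antisymmetry and index placement when applying \eqref{eq:Lie_bracket_Ham} — i.e.\ keeping track of which argument of the bracket is $\bm h_\alpha$ versus $\bm h_\beta$, and correspondingly which Reeb derivation appears — and making sure the constancy claim is read in the $t$-dependent sense (total derivative along prolonged dynamics on $\R\times M$), not as a genuine first integral of each $X_{\bm h_\beta}$.
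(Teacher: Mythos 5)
Your proposal is correct and follows essentially the same route as the paper's proof: compute $\d I_\theta/\d t$ along the prolonged dynamics, rewrite $X_{\bm h_\beta}\bm h_\alpha$ via \eqref{eq:Lie_bracket_Ham}, use closure of the $\bm\eta$-Lie--Hamilton algebra under the bracket and under Reeb derivations, pair with $\theta$, and conclude by linear independence of $h_1^\theta,\ldots,h_r^\theta$. The sign and index bookkeeping matches the paper's computation, so no gap remains.
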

\begin{proof} If ${\bm I}=\sum_{\alpha=1}^rf^\alpha(t){\bm h}_\alpha$, then one has
\begin{equation}\label{eq:Derivation}
\frac{\d {\bm I}}{\d t}(t)=\frac{\partial  {\bm I}}{\partial t}(t)+(X_{\bm h})_t {\bm I}_t=\sum_{\alpha=1}^r\left(\frac{\d f^\alpha}{\d t}(t){\bm h}_\alpha+\sum_{\beta=1}^rb_\beta(t)f^\alpha (t)X_{{\bm h}_\beta}{\bm h}_\alpha\right)
\end{equation}
Hence,
$$
\frac{\d  {\bm I}}{\d t}=\sum_{\alpha=1}^r\left(\frac{\d f^\alpha}{\d t}{\bm h}_{\alpha}-\sum_{\beta=1}^rb_\beta(t)f^\alpha (\{{\bm h}_\beta,{\bm h}_\alpha\}+R_{{\bm h}_\alpha}{\bm h}_\beta)\right).
$$
For $t$-dependent constants of the motion and composing both sides of the above expression with $\theta$, one has that 
$$
\sum_{\alpha=1}^r\frac{\d f^\alpha}{\d t}{ h}^\theta_{\alpha}=\sum_{\alpha,\beta=1}^rb_\beta(t)f^\alpha (\{{\bm h}_\beta,{\bm h}_\alpha\}^\theta+R_{{\bm h}_\alpha}{ h}^\theta_\beta)=-\sum_{\alpha,\beta=1}^rb_\beta(t)f^\alpha \sum_{\nu=1}^r(c_{\alpha\beta}\,^\nu{h}^\theta_\nu-\lambda_{\alpha\beta}
\,^\nu {h}^\theta_\nu). $$
If the functions $h_\alpha^\theta$ are linearly independent, it follows \eqref{eq:Final}.
\end{proof}
The main result of Proposition \ref{Prop:ConsMotkCon} is the fact that some $t$-dependent constants of motion for a  $k$-contact Lie system can be described via another Lie system \eqref{eq:Final}. This illustrates one of the reasons why $k$-contact geometry is interesting for $k$-contact Lie systems. On the other hand, these examples also showed that it is natural to associated every $k$-contact $\bm \eta$-Hamiltonian function with a vector field taking values in the Reeb distribution. 

\begin{remark}\label{Rem:SpecialCases} Note that \eqref{eq:Derivation} and the conditions in Proposition \ref{Prop:ConsMotkCon} are the key to many new methods. Assume that $I_\theta$ is $t$-independent, the Lie algebra $\langle [\mathfrak{W},\mathfrak{W}],\theta\rangle$ of a $k$-contact Lie--Hamilton algebra $\mathfrak{W}$ consists of constants and the $R_{\bm h_\alpha} \langle {\bm h}_\beta,\theta\rangle$ are constants (as it will happen in some examples of this work). Under the given conditions, one has, in view of \eqref{eq:Derivation}, that 
$
b(t)=\frac{dI_\theta}{dt}
$ for a $t$-dependent function $b(t)$. Hence, $\int^tb(t')\d t'-I_\theta$ is a constant of motion for $(M,\bm \eta,X)$.
 
    Recall that a {\it generalised master symmetry of order $m$} for a system of differential equations associated with a vector field $X$ on a manifold $M$ is a vector field $Z$ on $M$ such that ${\rm ad}_X^mZ=0$, where ${\rm ad}^m_X=(m-{\rm times}) {\rm ad}_X\circ \ldots\circ {\rm ad}_X$ (see \cite{Fer_93}).  Generalised master symmetries allow one to obtain functions, the {\it generators of constants of motion of order $m$}, whose derivative of order $m$ in terms of the time is zero, but the derivative of order $m-1$ is not. The same idea, which appears in symplectic mechanics, can be applied to $t$-independent projectable $k$-contact Lie systems. For instance, assume that $\mathfrak{W}_\theta$ is nilpotent of order $m$ consisting of first integrals of the Reeb vector fields of a $k$-contact Lie system $(M,\bm\eta,X)$. Consider also the ${\rm \eta}$-Lie Hamilton algebra $\mathfrak{W}$ for $V^X$. In such a case,
$$
\frac{\d^m I_\theta}{\d t^m}=(-1)^m\stackrel{m-{\rm times}}{\overbrace{\{h_\theta,\{h_\theta \ldots,\{h_\theta,I_\theta\}\ldots\}}}=0,\qquad I_\theta \in \mathfrak{W}^\theta,
$$
    and $I_\theta$ is a $t$-dependent function of order, at most $m-1$,  on solutions. Meanwhile, $X_{I_\theta}$ becomes a master symmetry of order $m$ of $X$.
    
\end{remark}
 \section{\texorpdfstring{$k$}{}-contact  Lie systems and other  geometric structures}\label{Sec:kcontactAndOthers}

It is worth noting that a Lie system may potentially be Hamiltonian relative to different types of geometric structures. In some cases, some of them can be more appropriate than others. The following proposition shows how projectable $k$-contact Lie systems induce some $k$-symplectic Lie  systems on other spaces.

\begin{proposition}\label{prop:conservative-contact-project-symplectic} If $(M,\bm \eta,X)$ is a projectable  $k$-contact Lie system, the space of integral curves of the Reeb distribution $\mathcal{D}^R:=\ker \d\bm \eta$, let us say $M/\mathcal{D}^R$, is a manifold, and $\mathfrak{p}_R:M\rightarrow M/\mathcal{D}^R$ is the canonical projection that becomes a submersion, then $(M/\mathcal{D}^R,\bm\omega,\mathfrak{p}_{R*}X)$, where $\mathfrak{p}_{R}^*\bm\omega=\d\bm \eta$, is a $k$-symplectic Lie system relative to the $k$-symplectic form $\bm\omega$ on $M/\mathcal{D}^R$, namely the vector fields $\mathfrak{p}_{R*}X_t$, with $t\in \mathbb{R}$, are $\bm \omega$-Hamiltonian relative to the $k$-symplectic form $\bm\omega$.
\end{proposition}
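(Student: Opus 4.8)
The plan is to verify four things in order: that $\d\bm\eta$ descends along $\mathfrak{p}_R$ to a closed $\mathbb{R}^k$-valued two-form $\bm\omega$ on $M/\mathcal{D}^R$; that $\bm\omega$ is non-degenerate, hence $k$-symplectic; that $X$ is $\mathfrak{p}_R$-projectable, so $\mathfrak{p}_{R*}X$ is a well-defined $t$-dependent vector field and in fact a Lie system; and that the vector fields $\mathfrak{p}_{R*}X_t$ are $\bm\omega$-Hamiltonian. For the first point I would show that $\d\bm\eta$ is basic with respect to $\mathfrak{p}_R$: by Theorem~\ref{thm:k-contact-Reeb}, $\iota_{R_\alpha}\d\eta^\beta=0$ for all $\alpha,\beta=1,\dots,k$, so $\d\bm\eta$ is horizontal, and by Cartan's formula $\Lie_{R_\alpha}\d\eta^\beta=\d\,\iota_{R_\alpha}\d\eta^\beta=0$, so $\d\bm\eta$ is invariant along $\mathcal{D}^R=\langle R_1,\dots,R_k\rangle$. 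Since the fibres of $\mathfrak{p}_R$ are the connected leaves of $\mathcal{D}^R$ and $\mathfrak{p}_R$ is a surjective submersion, there is a unique $\bm\omega\in\Omega^2(M/\mathcal{D}^R,\mathbb{R}^k)$ with $\mathfrak{p}_R^*\bm\omega=\d\bm\eta$; and as $\mathfrak{p}_R^*\d\bm\omega=\d\d\bm\eta=0$ while $\mathfrak{p}_R^*$ is injective on forms, $\bm\omega$ is closed.

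For non-degeneracy, fix $[x]\in M/\mathcal{D}^R$ and $v\in\T_{[x]}(M/\mathcal{D}^R)$ with $\iota_v\omega^\alpha=0$ for $\alpha=1,\dots,k$. Choosing a lift $\tilde v\in\T_xM$ of $v$ and using $\mathfrak{p}_R^*\omega^\alpha=\d\eta^\alpha$, one gets $\d\eta^\alpha(\tilde v,\tilde w)=\omega^\alpha(v,\T_x\mathfrak{p}_R\tilde w)=0$ for every $\tilde w\in\T_xM$, hence $\tilde v\in\ker\d\bm\eta=\mathcal{D}^R_x=\ker\T_x\mathfrak{p}_R$, so $v=\T_x\mathfrak{p}_R\tilde v=0$. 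Therefore $\bigcap_{\alpha=1}^k\ker\omega^\alpha=0$, and $\bm\omega$ is a $k$-symplectic form. (Should the ambient notion of $k$-symplectic structure also require an auxiliary integrable distribution, that extra datum would have to be supplied separately; I expect it to be a minor addition.)

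Third, I would use the projectability hypothesis. Write $V^X=\langle X_1,\dots,X_r\rangle$ with $X_\alpha=X_{\bm h_\alpha}$ for the corresponding $\bm\eta$-Hamiltonian $k$-functions $\bm h_1,\dots,\bm h_r$. Projectability of $(M,\bm\eta,X)$ means $R_\beta\bm h_\alpha=0$ for all $\alpha,\beta$, and then Proposition~\ref{Prop:Calculation}(iii) gives $[R_\beta,X_\alpha]=X_{R_\beta\bm h_\alpha}=X_{\bm 0}=0$, using the linearity of $\bm h\mapsto X_{\bm h}$ from Corollary~\ref{Cor:MorLieBrackett}. Hence each $X_\alpha$ is a Lie symmetry of $\mathcal{D}^R=\ker\T\mathfrak{p}_R$ and, the fibres of $\mathfrak{p}_R$ being connected, is $\mathfrak{p}_R$-projectable; consequently every $X_t=\sum_\alpha b_\alpha(t)X_\alpha$ is projectable and $\mathfrak{p}_{R*}X$ is a well-defined $t$-dependent vector field. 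As $\mathfrak{p}_{R*}$ restricts to a Lie algebra morphism on projectable vector fields, $\mathfrak{p}_{R*}(V^X)$ is a finite-dimensional Lie algebra of vector fields on $M/\mathcal{D}^R$ containing all the $\mathfrak{p}_{R*}X_t$, so $\mathfrak{p}_{R*}X$ is a Lie system by the Lie Theorem~\ref{Lie_theorem}. Moreover, since $R_\beta\bm h_\alpha=0$ with connected fibres, each $\bm h_\alpha$ descends to some $\bar{\bm h}_\alpha\in\Cinfty(M/\mathcal{D}^R,\mathbb{R}^k)$ with $\mathfrak{p}_R^*\bar{\bm h}_\alpha=\bm h_\alpha$; by Proposition~\ref{prop:eta-Hamiltonian-vector-field} and $R_\gamma h_\alpha^\beta=0$ one has $\iota_{X_\alpha}\d\eta^\beta=\d h_\alpha^\beta$, i.e.\ $\iota_{X_\alpha}\mathfrak{p}_R^*\omega^\beta=\mathfrak{p}_R^*\d\bar h_\alpha^\beta$, and since $X_\alpha$ is $\mathfrak{p}_R$-related to $\mathfrak{p}_{R*}X_\alpha$ and $\mathfrak{p}_R^*$ is injective, $\iota_{\mathfrak{p}_{R*}X_\alpha}\omega^\beta=\d\bar h_\alpha^\beta$. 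Thus each $\mathfrak{p}_{R*}X_\alpha$, and hence each $\mathfrak{p}_{R*}X_t=\sum_\alpha b_\alpha(t)\mathfrak{p}_{R*}X_\alpha$, is $\bm\omega$-Hamiltonian with $k$-Hamiltonian function $\sum_\alpha b_\alpha(t)\bar{\bm h}_\alpha$; this shows $(M/\mathcal{D}^R,\bm\omega,\mathfrak{p}_{R*}X)$ is a $k$-symplectic Lie system.

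The argument is mostly bookkeeping about descent along the submersion $\mathfrak{p}_R$; the genuinely structural input is the non-degeneracy of $\bm\omega$, which rests on the $k$-contact identity $\ker\d\bm\eta=\mathcal{D}^R$ and the injectivity of $\mathfrak{p}_R^*$ on forms. The step that must be handled with some care is the translation of the hypothesis ``the $\bm\eta$-Hamiltonian $k$-functions of $V^X$ are first integrals of the Reeb vector fields'' into its two uses above, namely $[R_\beta,X_\alpha]=0$ (so that the $X_\alpha$ and the $\bm h_\alpha$ both descend to $M/\mathcal{D}^R$) and $\iota_{X_\alpha}\d\eta^\beta=\d h_\alpha^\beta$ (so that the descended $k$-functions are honest $\bm\omega$-Hamiltonians). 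Everything else is routine.
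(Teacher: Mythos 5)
Your proposal is correct and follows essentially the same route as the paper's proof: descent of $\d\bm\eta$ via $\iota_{R_\alpha}\d\bm\eta=0$ and $\Lie_{R_\alpha}\d\bm\eta=0$, non-degeneracy of $\bm\omega$ by lifting a kernel vector into $\ker\d\bm\eta=\ker\T\mathfrak{p}_R$, projectability of $V^X$ from $[R_\beta,X_\alpha]=0$, and the pullback computation $\mathfrak{p}_R^*(\iota_{\mathfrak{p}_{R*}X_\alpha}\bm\omega)=\iota_{X_\alpha}\d\bm\eta=\mathfrak{p}_R^*(\d\bar{\bm h}_\alpha)$ to identify the projected vector fields as $\bm\omega$-Hamiltonian. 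The only cosmetic differences are your explicit appeal to Proposition~\ref{Prop:Calculation}(iii) and the Lie Theorem, which the paper leaves implicit.
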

\begin{proof} Since $(M,\bm\eta,X)$ is projectable, the Reeb vector fields $R_1,\ldots,R_k$ commute with  the $\bm \eta$-Hamiltonian vector fields of $V^X$. Therefore, all the elements of $V^X$ are projectable onto $M/\mathcal{D}^R$ and their projections span a finite-dimensional Lie algebra of vector fields on $M/\mathcal{D}^R$. Moreover, the $\bm \eta$-Hamiltonian $k$-functions of the elements of $V^X$ are first integrals of the Reeb vector fields of $\bm \eta$. Hence, they are also projectable. Moreover, $\Lie_{R_\alpha}\d\bm\eta=0$ and $\iota_{R_\alpha}\d\bm\eta=0$ for $\alpha=1,\ldots,k$. Hence, $\d\bm\eta$ can be projected onto $M/\mathcal{D}^R$. In other words, there exists a unique two-form, $\bm\omega$, on $M/\mathcal{D}^R$ such that $\mathfrak{p}_R^*\bm\omega=\d\bm\eta$. Note that $\bm\omega$ is closed. Moreover, if $\iota_{Y_{[x]}}\bm\omega_{[x]}=0$ for a tangent vector $Y_{[x]}\in \T_{[x]}(M/\mathcal{D}^R)$ where $[x]$ is the leave of $\mathcal{D}^R$ passing through a point $x\in M$, then there exists a tangent vector $\widetilde{Y}_x\in\T_xM$ such that $\T_x\mathfrak{p}_R(\widetilde{Y}_x)=Y_{[x]}$. Hence, $\T_{x}\mathfrak{p}_R^T\iota_{Y_{[x]}}\bm\omega_{[x]}=\iota_{\widetilde{Y}_x}(\d\bm\eta)_x=0$ and  $\widetilde{Y}_x\in \ker(\d\bm\eta)_x$. Moreover, $\mathfrak{p}_{R*x}\widetilde{Y}_x=0$ and $\bm\omega$ is non-degenerate. Since $\bm \omega$ is closed too, it becomes a $k$-symplectic form and the vector fields of $\mathfrak{p}_{R*}V^X$ span a finite-dimensional Lie algebra of Hamiltonian vector fields relative to $\bm \omega$. In fact, given a basis of ${\bm \eta}$-Hamiltonian $k$-functions $\bm h_1,\ldots,\bm h_r$ for a basis $X_1,\ldots,X_R$ of vector fields of $V^X$, one may write that $\bm h_\alpha=\mathfrak{p}_R^*\bm g_\alpha$ for $\alpha=1,\ldots,k$ and some $k$-functions $\bm g_1,\ldots,\bm g_r\in \Cinfty(M/\mathcal{D}^R,\mathbb{R}^k)$ . Moreover,
$$
\mathfrak{p}_R^*(\iota_{\mathfrak{p}_{R*}X_\alpha}\bm \omega)=\iota_{X_\alpha}\d\bm\eta=\d\bm h_\alpha-\sum_{\mu,\beta=1}^k(R_\beta h_\alpha^\mu)\eta^\beta \otimes e_\mu=\d\mathfrak{p}_{R}^*\bm g_\alpha=\mathfrak{p}_R^*(\d\bm g_\alpha).
$$
Hence, $\mathfrak{p}_{R*}X_\alpha$ is $\bm\omega$-Hamiltonian relative to $\bm\omega$. Therefore, the time-dependent vector field $\mathfrak{p}_{R*}X$, namely the $t$-parametric family of vector fields $(\mathfrak{p}_{R*}X)_t=\mathfrak{p}_{R*}X_t$ for every $t\in \mathbb{R}$, becomes a $k$-symplectic Lie system relative to $\bm\omega$. 
\end{proof}

Let us provide a proposition that allows one to study $k$-contact Lie systems via presymplectic Lie systems. Notwithstanding, this implies that one has to study a problem on a higher-dimensional manifold with different properties than the initial one, which may make the problem harder to study in some cases \cite{LR_23}, but easier in others. It is worth noting that Proposition \ref{Eq:ExtHam} is a particular case of \cite[Proposition 7.14]{LRS_25} focusing on a particular case of relevance for us.

\begin{proposition}\label{Eq:ExtHam}
Every $k$-contact Lie system $(M,\bm\eta,X_h)$ gives rise to a presymplectic Lie system  $(\mathbb{R}^k\times M,\bm \omega=\d(\sum_{\alpha=1}^kz_\alpha \widehat{\eta}^\alpha),\sum_{\alpha,\beta=1}^kz_\alpha(R_\beta h^\alpha)\partial/\partial z^\beta+X_{\bm h})$, where $\hat \eta^\alpha$ is the lift to $\mathbb{R}^k\times M$, via the natural projection from $\mathbb{R}^k\times M$ onto $M$, of $\eta^\alpha$ for $\alpha=1,\ldots,k$. 
\end{proposition}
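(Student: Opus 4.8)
The plan is to realize $(\mathbb{R}^k\times M,\bm\omega)$ as a presymplectization of $(M,\bm\eta)$ and to identify the stated $t$-dependent vector field with the lift to it of the original $k$-contact Lie system, which will then be presymplectic-Hamiltonian by construction. Write $\pi_M\colon\mathbb{R}^k\times M\to M$ for the projection, let $\widehat{(\cdot)}:=\pi_M^*(\cdot)$, and put $\lambda:=\sum_{\alpha=1}^k z_\alpha\widehat\eta^\alpha$, so that $\bm\omega=\d\lambda$ is exact, hence closed; as in Proposition \ref{prop:ReductSympl} this already makes $\bm\omega$ a presymplectic form (its rank need not be constant). By the Lie Theorem it then suffices to (a) exhibit a finite-dimensional Lie algebra of $\bm\omega$-Hamiltonian vector fields on $\mathbb{R}^k\times M$, and (b) check that the vector field in the statement is an $\mathbb{R}$-linear combination of a basis of it with $t$-dependent coefficients.

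For (a), fix a basis $X_{\bm h_1},\dots,X_{\bm h_r}$ of the Vessiot--Guldberg Lie algebra $V^X\subset\X_{\bm\eta}(M)$ of the $k$-contact Lie system, with $\bm\eta$-Hamiltonian $k$-functions $\bm h_1,\dots,\bm h_r\in\Cinfty_{\bm\eta}(M,\mathbb{R}^k)$, and to each $\bm h=\sum_\alpha h^\alpha e_\alpha\in\Cinfty_{\bm\eta}(M,\mathbb{R}^k)$ assign the vector field $\widetilde X_{\bm h}:=\sum_{\alpha,\beta=1}^k z_\alpha(R_\beta h^\alpha)\,\partial/\partial z^\beta+X_{\bm h}$ on $\mathbb{R}^k\times M$, where $X_{\bm h}$ also denotes its lift. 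I would compute $\iota_{\widetilde X_{\bm h}}\d\lambda$ directly from $\iota_{X_{\bm h}}\eta^\alpha=-h^\alpha$ and $\iota_{X_{\bm h}}\d\eta^\alpha=\d h^\alpha-\sum_\beta(R_\beta h^\alpha)\eta^\beta$ (Proposition \ref{prop:eta-Hamiltonian-vector-field}): the two terms of the form $\sum_{\alpha,\beta}z_\alpha(R_\beta h^\alpha)\widehat\eta^\beta$, one coming from $\sum_\alpha\d z_\alpha\wedge\widehat\eta^\alpha$ and the other from $\sum_\alpha z_\alpha\d\widehat\eta^\alpha$, cancel after relabelling the summation indices, leaving $\iota_{\widetilde X_{\bm h}}\d\lambda=\d H_{\bm h}$ with $H_{\bm h}:=\sum_\alpha z_\alpha\widehat h^\alpha$. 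So every $\widetilde X_{\bm h}$ is $\bm\omega$-Hamiltonian, and $\bm h\mapsto\widetilde X_{\bm h}$ is $\mathbb{R}$-linear and injective, since its $\pi_M$-projection is $X_{\bm h}$, which vanishes only if $\bm h=0$.

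The heart of the argument is that $\langle\widetilde X_{\bm h_1},\dots,\widetilde X_{\bm h_r}\rangle$ is a Lie subalgebra; concretely I claim $[\widetilde X_{\bm f},\widetilde X_{\bm g}]=\widetilde X_{-\{\bm f,\bm g\}_{\bm\eta}}$ (here $[X_{\bm f},X_{\bm g}]=-X_{\{\bm f,\bm g\}_{\bm\eta}}$ by Corollary \ref{Cor:MorLieBrackett}). Using $\iota_{[Y,Z]}\bm\omega=\d(\bm\omega(Z,Y))$ for $\bm\omega$-Hamiltonian $Y,Z$ ($\bm\omega$ closed), together with a second short index computation showing $\bm\omega(\widetilde X_{\bm f},\widetilde X_{\bm g})=H_{\{\bm f,\bm g\}_{\bm\eta}}$ (again a cancellation, now of terms $\sum z (R g) f$, via \eqref{eq:Lie_bracket_Ham}), one sees that $[\widetilde X_{\bm f},\widetilde X_{\bm g}]$ is $\bm\omega$-Hamiltonian with the same Hamiltonian, $H_{-\{\bm f,\bm g\}_{\bm\eta}}$, as $\widetilde X_{-\{\bm f,\bm g\}_{\bm\eta}}$; and both vector fields project under $\pi_M$ to $[X_{\bm f},X_{\bm g}]$. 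Hence their difference $D$ is $\pi_M$-vertical and satisfies $\iota_D\bm\omega=0$. This is the only point where I expect genuine care to be needed: on a presymplectic manifold a function does not determine its Hamiltonian vector field, so one must still deduce $D=0$ — which follows because a $\pi_M$-vertical vector $W=\sum_\beta w^\beta\,\partial/\partial z^\beta$ satisfies $\iota_W\bm\omega=\sum_\beta w^\beta\widehat\eta^\beta$, and this vanishes only when all $w^\beta=0$, the one-forms $\widehat\eta^1,\dots,\widehat\eta^k$ being pointwise linearly independent (corank-$k$ kernel of $\bm\eta$). Thus $\bm h\mapsto-\widetilde X_{\bm h}$ is a Lie algebra monomorphism from $(\mathfrak{W},\{\cdot,\cdot\}_{\bm\eta})$, and $\langle\widetilde X_{\bm h_1},\dots,\widetilde X_{\bm h_r}\rangle$ is a finite-dimensional Lie algebra of $\bm\omega$-Hamiltonian vector fields.

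For (b), write the $k$-contact Lie system as $X_h=\sum_{\gamma=1}^r b_\gamma(t)X_{\bm h_\gamma}$; then its associated $t$-dependent $\bm\eta$-Hamiltonian $k$-function is $\bm h=\sum_\gamma b_\gamma(t)\bm h_\gamma$ and $R_\beta h^\alpha=\sum_\gamma b_\gamma(t)R_\beta h_\gamma^\alpha$, since the $b_\gamma(t)$ are constant along $M$. Therefore the vector field in the statement equals $\sum_\gamma b_\gamma(t)\widetilde X_{\bm h_\gamma}$, a $t$-dependent vector field taking values in the finite-dimensional Lie algebra $\langle\widetilde X_{\bm h_1},\dots,\widetilde X_{\bm h_r}\rangle$ of $\bm\omega$-Hamiltonian vector fields; by the Lie Theorem it is a Lie system, and by construction it is presymplectic-Hamiltonian with respect to $\bm\omega$, i.e.\ a presymplectic Lie system. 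One may alternatively simply invoke \cite[Proposition 7.14]{LRS_25}, of which this is the special case where the presymplectic form on the extended manifold arises from a $k$-contact form.
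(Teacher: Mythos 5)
Your proposal is correct. Note, however, that the paper itself gives no in-text proof of Proposition \ref{Eq:ExtHam}: it simply states the result and refers to it as a particular case of \cite[Proposition 7.14]{LRS_25}, so your argument is a genuinely worked-out alternative to the paper's proof-by-citation. What you do differently is carry out the presymplectization explicitly: you verify $\iota_{\widetilde X_{\bm h}}\d\bigl(\sum_{\alpha=1}^k z_\alpha\widehat\eta^\alpha\bigr)=\d\bigl(\sum_{\alpha=1}^k z_\alpha\widehat h^\alpha\bigr)$ directly from \eqref{eq:ConHam}, and then show that $\bm h\mapsto\widetilde X_{\bm h}$ is a Lie algebra morphism, handling correctly the one delicate point (non-uniqueness of presymplectic Hamiltonian vector fields) by noting that the difference of the two candidate brackets is $\pi_M$-vertical and that $\iota_W\bm\omega=\sum_\beta w^\beta\widehat\eta^\beta$ forces it to vanish, since $\eta^1,\dots,\eta^k$ are pointwise independent. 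I checked both index computations: $\iota_{\widetilde X_{\bm h}}\bm\omega=\d H_{\bm h}$ holds exactly as you say, and $\bm\omega(\widetilde X_{\bm f},\widetilde X_{\bm g})=H_{\{\bm f,\bm g\}_{\bm\eta}}$ is also true, although it is better described as a consequence of the antisymmetry of $\{\cdot,\cdot\}_{\bm\eta}$ (equivalently of the identity $X_{\bm g}f^\alpha=\d\eta^\alpha(X_{\bm f},X_{\bm g})-\sum_\beta g^\beta R_\beta f^\alpha$) than as a cancellation; this is a presentational point, not a gap. What your route buys, beyond self-containedness, is explicit extra structure the bare statement does not record: the global Hamiltonians $H_{\bm h}=\sum_{\alpha=1}^k z_\alpha\widehat h^\alpha$ on $\mathbb{R}^k\times M$ and the Lie algebra monomorphism from the ${\bm\eta}$-Lie--Hamilton algebra into the $\bm\omega$-Hamiltonian vector fields, which immediately identifies a finite-dimensional Vessiot--Guldberg Lie algebra for the lifted system; what the paper's citation buys is brevity and the greater generality of the result in \cite{LRS_25}, of which this is the specialisation to Lie systems.
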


As it happens for the similar proposition in the contact case, Proposition \ref{Eq:ExtHam} may be inappropriate to study $k$-contact Hamiltonian systems on $M$ via Hamiltonian systems on presymplectic manifolds $\mathbb{R}^k\times M$ since the dynamics of a $k$-contact Hamiltonian vector field on $M$ may significantly differ from the presymplectic Hamiltonian system on $\mathbb{R}^k\times M$ used to investigate it. For instance, a $k$-contact Hamiltonian vector field $X$ on $M$ may have stable points, while $\sum_{\alpha,\beta=1}^kz_\alpha (R_\beta h^\alpha )\partial/\partial z^\beta +X_{\bm h}$, which is its associated Hamiltonian vector field on $\mathbb{R}^k\times M$, has not. This has relevance in certain theories, like the energy-momentum method \cite{MS_88}. Additionally, enlarging the manifold where the dynamics of a system is considered poses a problem that should have more advantages than drawbacks.

 \begin{example} Note that Example \ref{Ex:TDepIsotropic} satisfies the conditions of Proposition \ref{prop:conservative-contact-project-symplectic}. Then, the Reeb vector fields have constants of motion given by the common first integrals of the vector fields
$$
x_1\frac{\partial}{\partial x_1}+v_1\frac{\partial}{\partial v_1},\qquad x_2\frac{\partial}{\partial x_2}+v_2\frac{\partial}{\partial v_2}.
$$
We can define the first integrals  $f_1=v_1/x_1$ and $f_2=v_2/x_2$ of the previous vector fields.  Hence, the integral leaves of the distribution spanned by $Y_4,Y_2$ can be described by means of the surfaces with $f_1,f_2$ equal to constants. Hence, $f_1,f_2$ are coordinates in the space of leaves. In those coordinates, the $\bm\eta
_{is}$-Hamiltonian vector fields  $X_1,\ldots,X_4$ read
$$
X_{1}=f_1x_1\parder{}{x_1}+f_{2}x_{2}\parder{}{x_{2}}-f_{1}^2\parder{}{f_{1}}-f_{2}^2\parder{}{f_{2}}, \qquad X_{2}=\frac{1}{2}x_1\parder{}{x_{1}}+\frac{1}{2}x_{2}\parder{}{x_{2}}-f_{1}\parder{}{f_{1}}-f_{2}\parder{}{f_{2}},
$$
$$
X_{3}=-\parder{}{f_1}-\parder{}{f_{2}}, \qquad X_{4}=x_{1}\parder{}{x_{1}}+x_{2}\parder{}{x_{2}},
$$
and the differential of two-contact form becomes
$$
\d\bm\eta_{is}=2\Upsilon^1\wedge\Upsilon^3\otimes e_{1}=\left(\frac{2}{(f_{2}-f_{1})^{2}}\d f_{1} \wedge \d f_{2}\right)\otimes e_{1}\,.
$$
We can define a mapping $\mathfrak{p}_R\colon (f_1,f_2,x_1,x_2)\in \mathbb{R}^4\mapsto(f_1,f_2)\in \mathbb{R}^2$ that projects the initial space onto the space of leaves, which, by Proposition \ref{prop:conservative-contact-project-symplectic}, is the two-symplectic manifold with two-symplectic form $\bm \omega$ defined by $\mathfrak{\bm p}_{R}^*\bm\omega=\d\bm\eta_{is}$. Moreover, $\mathfrak{\bm p}_{R_*} X_{i}$, where $i=1,\ldots,4$, are given by
$$
\mathfrak{\bm p}_{R_*} X_{1}=-f^{2}_{1}\parder{}{f_{1}}-f_{2}^{2}\parder{}{f_{2}}, \qquad \mathfrak{\bm p}_{R_*} X_{2}=-f_{1}\parder{}{f_{1}}-f_{2}\parder{}{f_{2}}, \qquad \mathfrak{\bm p}_{R_*} X_{3}=-\parder{}{f_{1}}-\parder{}{f_{2}},
$$
and $\mathfrak{\bm p}_{R_*} X_{4}=0$, are Hamiltonian vector fields relative to the two-symplectic  form $\d\bm\eta_{is}$, with Hamiltonian $k$-functions on $\mathbb{R}^2$ of the form 
$$
\bm h'_{1}=-\frac{2f_{1}f_{2}}{f_{1}-f_{2}}e_{1}, \qquad\bm h'_{2}=\left(1+\frac{2f_{1}}{f_{2}-f_{1}}\right)e_{1}, \qquad \bm h'_{3}=-\frac{2}{f_{1}-f_{2}}e_{1}, \qquad \bm h'_{4}=-e_{2}.
$$
Note that $\mathfrak{\bm p}_{R}^* \bm h'_{i}=\bm h_{i}$ for $i=1,\ldots, 4$.
\end{example}

It is interesting to analyse how to recover the solution to the initial projectable $\bm \eta$-contact Lie system from the information of the projected $k$-symplectic one. Some particular results in this direction have recently appeared in the literature \cite{Car_25}. We expect to investigate this topic further in the future.

\section{Methods to construct \texorpdfstring{$k$}{k}-contact Lie systems}\label{Sec:NewMethods}

Let us develop a method to turn a Lie system $X$ into a $k$-contact Lie system by finding a compatible $k$-contact form, i.e. one turning $V^X$ into $\bm\eta$-Hamiltonian vector fields for a certain $k$-contact form $\bm\eta$. This will be more or less explicitly used in further sections to determine potential applications of $k$-contact Lie systems. Our method for deriving $\bm\eta$ relies on distributions, a strategy that generally simplifies the process compared to directly calculating $\bm\eta$.

Recall that a {\it locally automorphic Lie system} is a triple $(M,X,V)$, where $X$ is a Lie system on a manifold $M$ so that $V$ is a Lie algebra of dimension $\dim M$ whose vector fields span the tangent space to $M$. It is known \cite{LS_20,GLMV_19} that locally automorphic Lie systems are locally diffeomorphic to a Lie system on a Lie group $G$ of the form
\begin{equation}\label{eq:AutomorphicLie}
X^R=\sum_{\alpha=1}^rb_\alpha(t)X_\alpha^R,
\end{equation}
where $X_1^R,\ldots,X_r^R$ is a basis of right-invariant vector fields on $G$ spanning a Lie algebra isomorphic to $V$ and $b_1(t),\ldots,b_r(t)$ are $t$-dependent functions. This is structure is extremely useful, as one can treat locally automorphic Lie systems on general manifolds as Lie systems of a very specific type on Lie groups. Moreover, there are many applications of locally automorphic Lie systems \cite{CL_11,CGM_00,LS_20}. For instance, there are many locally automorphic Lie systems related to control systems (cf. \cite{Ram_02}) and we will study examples of this type in following sections.

Consider now the Maurer--Cartan equations on $G$, which satisfy that
\begin{equation}\label{eq:MauG}
\d\eta_L^\alpha+\frac 12\sum_{\beta,\gamma=1}^rc_{\beta\gamma}\!^\alpha\eta_L^\beta\wedge\eta_L^\gamma=0\,,\qquad  \alpha=1,\ldots,r\,,
\end{equation}
where $c_{\beta\gamma}\!^\alpha$, with $\alpha,\beta,\gamma=1,\ldots,r$, are the constants of structure of a basis of left-invariant vector fields on $G$ given by $X^L_1,\ldots,X^L_r$, while $\eta^1_L,\ldots,\eta^r_L$ is its  dual basis, which consists of left-invariant one-forms on $G$.  
The above expression \eqref{eq:MauG}
follows immediately by evaluating $\d\eta^\alpha_L$ on pairs of left-invariant vector fields chosen among $X^L_1,\ldots,X_r^L$. Then, the differentials of left-invariant one-forms can be determined by the Lie algebra structure of $V$. 

In particular, it is simple to find Lie algebras admitting $k$ particular different indexes $\alpha_1,\ldots,\alpha_k\in \{1,\ldots,r\}$ so that $c_{\alpha_j\beta}\,\!^{\alpha_i}=0$ for every $\beta$ and $i,j=1,\ldots,k$. In view of \eqref{eq:MauG}, this implies that the elements of $\langle X^L_{\alpha_1},\ldots,X^L_{\alpha_k}\rangle$ take values in $ \bigcap_{i=1}^k\ker \d\eta_L^{\alpha_i}$.  If additionally $\bigcap_{i=1}^k\ker \d\eta_L^{\alpha_i}=\langle X^L_{\alpha_1},\ldots,X^L_{\alpha_k}\rangle$, then $\bm \eta=\sum_{i=1}^k\eta^{\alpha_i}_L\otimes e_{\alpha_i}$ is a $k$-contact form that is invariant relative to the vector fields $X^R_1,\ldots,X^R_r$. Recall that Lie groups of this type are related to the so called {\it $k$-contact Lie groups} \cite{LRS_25}, namely Lie groups with a left-invariant $k$-contact form\footnote{Note that the approach that follows could be developed similarly with a basis of left-invariant vector fields and a right-invariant $k$-contact form.}. In practice, it is relatively simple to determine Lie groups of this type, which will be shown in latter examples. Moreover, one-contact Lie groups were analysed in \cite{LR_23}. On the opposite, Lie groups no admitting a left-invariant contact form were discussed in \cite{GL_25} in the context of hydrodynamic equations.

Right-invariant vector fields leave invariant $\ker \bm \eta$, which can be spanned by left-invariant vector fields, which implies that the vector fields of the Vessiot--Guldberg Lie algebra $\langle X^R_1,\ldots,X^R_r\rangle$ consists of $\bm\eta$-Hamiltonian vector fields relative to $\bm \eta$. 
With no loss of generality (see \cite{LRS_25} for details), the basis of left-invariant vector fields of a $k$-contact Lie group can be chosen so that the Reeb vector fields are given by $X^L_1,\ldots,X_k^L$, which commute with $X^R_1,\ldots,X^R_r$. Then, the Lie systems of the form
$$
X^R=\sum_{\alpha=1}^rb_\alpha(t)X_\alpha^R,
$$
are projectable $k$-contact Lie systems regardless of the functions $b_1(t),\ldots,b_r(t)$. Indeed, it follows that the $\bm \eta$-Hamiltonian $k$-functions of $X^R_1,\ldots,X^R_r$ are first integrals of the Reeb vector fields, which are left-invariant, and the Lie system can be projected onto $G/\ker\d \bm \eta$, where $\ker\d\bm\eta $ is the distribution given by the Reeb vector fields. More specifically, it turns out that the $k$-contact Lie system is projectable onto a $k$-symplectic one \cite{LV_15}.

There is another manner to obtain, in general non-projectable, $k$-contact Lie systems from locally automorphic ones. As before, we analyse a locally automorphic Lie system via its associated Lie system \eqref{eq:AutomorphicLie} on a Lie group. Consider that the Lie algebra of the Lie group is such that there exists a linear subspace spanned by left-invariant vector fields, $X^L_1,\ldots,X^L_{r-k}$, that is maximally non-integrable, i.e. it admits no element leaving invariant the subspace via the commutator of vector fields, which is relatively straightforward to determine due to the Lie algebra structure of left-invariant vector fields. Consider also that there exists a supplementary abelian Lie subalgebra assumed to have, with no loss of generality, a basis of the form $X^L_{r-k+1},\ldots,X^L_r$. Then, let us analyse the maximally non-integrable distribution spanned by
$$
\mathcal{D}=\langle X^L_1,\ldots, X^L_{r-k}\rangle
$$
and the commutative Lie algebra
$$
\langle X^R_{r-k+1},\ldots,X^R_{r}\rangle.
$$
Note that the right-invariant vector fields are chosen so that $X^R_\alpha(e)=X^L_\alpha(e)$ for $\alpha=1,\ldots,r$, which makes them to have opposite commutation relations than left-invariant vector fields. 
The latter right-invariant vector fields $X^R_{r-k+1},\ldots,X^R_r$ leave $\mathcal{D}$ invariant and commute among themselves. 
This gives rise to a $k$-contact distribution $\mathcal{D}$ and the vector fields $X_{r-k+1}^R,\ldots,X_r^R$ leave it invariant. Hence, $X_1^R,\ldots,X_r^R$ become $k$-contact vector fields and one obtains a $k$-contact Lie system related to \eqref{eq:AutomorphicLie}. In this approach, the $k$-contact form must be obtained as in the proof of Theorem \ref{Prop:LocalEqu}. Namely, one has to derive the $k$ one-forms $\eta^{r-k+1},\ldots,\eta^{r}$ that annihilate $\langle X^L_1,\ldots,X^L_{r-k}\rangle$ and are duals to the vector fields $X^R_{r-k+1},\ldots,X^R_{r}$. It is worth noting that this approach works, at least locally at a point where $X^L_1,\ldots,X^L_{r-k},X^R_{r-k+1},\ldots,X^R_{r}$ are linearly independent, which can be always found locally at almost every point. The obtained one-forms are the components of a $k$-contact form $\bm \eta=\sum_{\alpha=r-k+1}^{r}\eta_L^\alpha\otimes e_\alpha$ turning the vector fields $X^R_1,\ldots,X^R_r$ into $\bm\eta$-Hamiltonian vector fields. Note that the $k$-contact form $\bm \eta$ described above is not, in general, a left-invariant one-form, while  $X^R$ does not need to be projectable.

There are several manners to obtain a maximally non-integrable $\mathcal{D}$. For instance, if $[X_1^L,X_2^L]\wedge X_1^L\wedge X_2^L$ is not vanishing, then $\mathcal{D}$ is maximally non-integrable. This is a relatively straightforward case to identify, as it suffices to locate two left-invariant vector fields $X$ and $Y$, such that their commutator $[X,Y]$ does not take values within the distribution spanned by $X$ and $Y$ at any point. Note that if $[X,Y]$ does not take values in the distribution spanned by $X,Y$ at any point, then it is impossible that $X\wedge Y= 0$ on an open set as, if this would happen,  
$$0=\mathcal{L}_X(X\wedge Y)=X\wedge \mathcal{L}_XY=\mathcal{L}_Y(X\wedge Y)=\mathcal{L}_YX\wedge Y\qquad\Rightarrow\qquad [X,Y]\wedge X\wedge Y=0,
$$
which is a contradiction.
Moreover, assume that $E=\langle X^L_1,X^L_2,X_3^L\rangle\subset \langle X^L_1,\ldots,X_r^L\rangle$ is such that    there is no element of $Y\in E$ such that $[Y,E]$ is not included in $E$. Then, $E$ is also maximally non-integrable. These ideas to obtain maximally non-integrable distributions will be used in our applications  in Section \ref{Sec:applications}.
\section{Diagonal prolongation of {\it k}-contact Lie systems}\label{Sec:Diagonalkcontact}
It is possible to describe superposition rules of Lie systems geometrically. To do so, it is convenient to introduce the so-called \textit{diagonal prolongation} of a $t$-dependent vector field and its basic properties \cite{CGM_07}.
Before that, let us introduce the projections
\begin{equation} \label{proj1}
    \mathfrak{pr}\colon M^{ \ell+1 } \ni (x_{(0)}, \ldots,x_{(\ell)}) \longmapsto (x_{(1)}, \ldots,x_{(\ell)}) \in M^{\ell}\,,
\end{equation}
\begin{equation} \label{proj2}
\mathfrak{pr}_{0}^{\alpha} \colon M^{ \ell+1 } \ni (x_{(0)},\ldots,x_{(\ell)}) \longmapsto x_{(\alpha)} \in M\,,
\end{equation}
for $\alpha=0,\ldots,\ell$. Recall that the group $S^{\ell+1}$ of permutations $x_{(\alpha)}\leftrightarrow x_{(\beta)}$, with $0\leq \alpha\neq \beta\leq \ell $ acts on $M^{\ell+1}$. 
\begin{definition}
    Given a $t$-dependent vector field on $M$ locally of the form
    \begin{equation}
    X(t,x_{(0)})=\sum^{n}_{i=1}X^{i}(t,x_{(0)})\frac{\partial}{\partial x^{i}_{(0)}}\,,
    \end{equation}
its \textit{diagonal prolongation} to $M^{\ell+1}$ is the $t$-dependent vector field on $M^{\ell+1}$ given
by
\begin{equation}
{X}^{[\ell+1]}(t,x_{(0)},\ldots,x_{(\ell)})=\sum^{\ell}_{a=0}\sum^{n}_{i=1}X^{i}(t,x_{(a)})\frac{\partial}{\partial x^{i}_{(a)}}\,.
\end{equation}
\qeddiamond\end{definition}
This definition can be rewritten in the following manner.
\begin{definition}
    Given a $t$-dependent vector field $X$ on $M$, its \textit{diagonal prolongation} to $M^{\ell+1}$ is the unique $t$-dependent vector field ${X}^{[\ell+1]}$ on $M^{\ell+1}$ such that:
\begin{enumerate}[(i)]
    \item The $t$-dependent vector field ${X}^{[\ell+1]}$ is invariant under the action of the symmetry group $S^{\ell+1}$ on $M^{\ell+1}$;
    \item The vector fields ${X}^{[\ell+1]}_{t}$ are projectable under the projections $\mathfrak{pr}^\alpha_{0}$ given by \eqref{proj2} and $\mathfrak{pr}^\alpha_{0*}{X}^{[\ell+1]}_t=X_t$ for every $t\in \mathbb{R}$.
\end{enumerate}
\qeddiamond\end{definition}

Several lemmas, covered in \cite{CL_11,CGM_07}, describe properties of the diagonal projection necessary to formulate the following proposition (see \cite[p.\,18--21]{CL_11} for details).

\begin{proposition}
\label{proposition}
    For every family of linearly independent (over $\mathbb{R}$) vector fields $X_{1},\ldots , X_{r}$\newline$ \in  \mathfrak{X}(M)$, there exists an integer $\ell$ with $\ell \dim M  \geq r$ such that their prolongations to $M^{\ell}$ are linearly
independent at a generic point.
\end{proposition}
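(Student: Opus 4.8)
The statement is classical (see \cite{CL_11,CGM_07}), and the natural approach is a dimension-counting / genericity argument. Given linearly independent vector fields $X_1,\ldots,X_r$ on $M$, I want to find $\ell$ with $\ell\dim M\ge r$ such that the diagonal prolongations $X_1^{[\ell]},\ldots,X_r^{[\ell]}$ on $M^\ell$ are linearly independent at a generic point. The plan is to show that a nontrivial relation $\sum_\alpha f_\alpha X_\alpha^{[\ell]}=0$ on some open subset of $M^\ell$ forces, by projecting with the maps $\mathfrak{pr}^a_0$ and using the definition of the diagonal prolongation, a too-strong linear-dependence condition on the $X_\alpha$ over the points of $M$.

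First I would set up the key quantitative object: for a point $(x_{(1)},\ldots,x_{(\ell)})\in M^\ell$, consider the linear map $\mathbb{R}^r\to \bigoplus_{a=1}^\ell T_{x_{(a)}}M$ sending $(c_1,\ldots,c_r)\mapsto \big(\sum_\alpha c_\alpha X_\alpha(x_{(1)}),\ldots,\sum_\alpha c_\alpha X_\alpha(x_{(\ell)})\big)$. The prolongations $X_1^{[\ell]},\ldots,X_r^{[\ell]}$ are linearly independent at $(x_{(1)},\ldots,x_{(\ell)})$ as tangent vectors precisely when this map is injective. So it suffices to produce, for suitable $\ell$, a point of $M^\ell$ at which the map is injective; then injectivity holds on an open neighbourhood, and — because the $X_\alpha^{[\ell]}$ are smooth and linear independence at a point of a family of vector fields is an open and dense condition once it holds somewhere — one upgrades this to linear independence at a generic point in the sense defined in Section~\ref{Sec:LieSystems} (no nontrivial $\Cinfty$-relation on any open set). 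Here I would invoke the standard lemmas on diagonal prolongations referenced just before the proposition.

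Next comes the inductive construction of the point. Since $X_1,\ldots,X_r$ are linearly independent over $\mathbb{R}$ as vector fields, for each nonzero $c\in\mathbb{R}^r$ the vector field $\sum_\alpha c_\alpha X_\alpha$ is not identically zero, hence is nonzero on a dense open subset of $M$. Starting from $\ell=1$: the subset $W_1\subset M$ where $X_1(x),\ldots,X_r(x)$ span a subspace of dimension $\rho_1:=\max_x \dim\langle X_1(x),\ldots,X_r(x)\rangle$ is open and dense, and the kernel of the evaluation map has constant dimension $r-\rho_1$ there. If $\rho_1=r$ we are done with $\ell=1$. Otherwise pick $x_{(1)}\in W_1$; the common kernel $K_1\subsetneq\mathbb{R}^r$ is a proper subspace, and since the $X_\alpha$ are independent as vector fields there is a point $x_{(2)}$ at which some $c\in K_1$ gives $\sum_\alpha c_\alpha X_\alpha(x_{(2)})\ne 0$; choosing $x_{(2)}$ generically, the combined evaluation map at $(x_{(1)},x_{(2)})$ has kernel $K_2\subsetneq K_1$. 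Iterating, the kernels strictly decrease, so after at most $r$ steps — and in any case after at most $\ell$ steps with $\ell\dim M\ge r$, since each evaluation at a new point can cut the kernel dimension but the total image lives in an $\ell\dim M$-dimensional space — the kernel becomes trivial. This yields the desired $\ell$ and the point at which the prolongations are independent.

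The main obstacle is the bookkeeping that makes the bound $\ell\dim M\ge r$ come out cleanly: one must argue that the kernel can always be made to strictly drop at each new point (which uses genericity of the added point and the $\mathbb{R}$-linear independence of the $X_\alpha$ as vector fields, not merely pointwise), and simultaneously that $\ell$ can be taken so that $\ell\dim M\ge r$ — the latter is automatic since the image of the evaluation map sits in $\bigoplus_{a=1}^\ell T_{x_{(a)}}M$ of dimension $\ell\dim M$, so injectivity of a map out of $\mathbb{R}^r$ forces $r\le \ell\dim M$. I would also need to be careful that "generic" is used consistently with the excerpt's conventions and that openness of the injectivity locus transfers to the stated genericity of linear independence; this is exactly where the cited lemmas from \cite{CL_11,CGM_07} on prolongations do the work, so I would lean on them rather than reprove them.
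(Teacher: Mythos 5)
Your core argument is correct and is essentially the route the paper itself relies on: the paper offers no proof of Proposition \ref{proposition}, deferring to \cite[p.~18--21]{CL_11} and \cite{CGM_07}, and the argument there is the same evaluation-map/kernel-shrinking induction you describe --- pick a nonzero $c$ in the current common kernel, use linear independence over $\mathbb{R}$ to find a point where $\sum_{\alpha}c_\alpha X_\alpha$ does not vanish, append that point so the kernel strictly drops, and stop after at most $r$ points; injectivity of a linear map from $\mathbb{R}^r$ into the $\ell\dim M$-dimensional space $\bigoplus_{a}\T_{x_{(a)}}M$ then yields $\ell\dim M\geq r$ automatically, as you note.

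One caveat: your parenthetical upgrade --- that once pointwise independence holds somewhere it is ``open and dense'', so that there is no nontrivial $\Cinfty$-relation on \emph{any} open set --- is false in the smooth category and, fortunately, not needed. Take $M=\mathbb{R}$, $X_1=\partial/\partial x$ and $X_2=\phi(x)\,\partial/\partial x$ with $\phi$ a nonzero compactly supported bump function: these are linearly independent over $\mathbb{R}$, yet every diagonal prolongation $X_2^{[\ell]}$ vanishes identically on the open set of $M^\ell$ where all coordinates lie outside $\supp\phi$, so the pointwise-independence locus is never dense and nontrivial relations do exist on some open subsets, for every $\ell$. What your construction actually gives --- pointwise independence at one point, hence on a neighbourhood $U$, hence the absence of nontrivial smooth relations on that $U$ --- is exactly what linear independence at a generic point means in Section \ref{Sec:LieSystems} (an open set on which only the trivial relation holds), so the density claim should simply be deleted and the conclusion stated for the constructed open set.
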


In the contact setting, the diagonal prolongation of a contact Lie  system is not a contact Lie system because the manifold of the diagonal prolongation is not always odd. Nevertheless, $k$-contact distributions allow  for a diagonal prolongation procedure as follows.

\begin{proposition} The diagonal prolongation to $N^{\ell+1}$ of a $k$-contact Lie system on $N$ is a $k(\ell+1)$-contact Lie system on $N^{\ell+1}$.
\end{proposition}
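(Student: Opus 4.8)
The plan is to reduce the statement to the local characterisation of $k$-contact distributions given in Theorem \ref{Prop:LocalEqu}, exploiting the fact that the diagonal prolongation is built component-wise on the factors of $N^{\ell+1}$. First I would set up notation: write $\pi\colon (N,\bm\eta,X)$ for the given $k$-contact Lie system, let $V^X=\langle X_1,\ldots,X_r\rangle$ be the Vessiot--Guldberg Lie algebra of $\bm\eta$-Hamiltonian vector fields, and let $\mathcal{D}=\ker\bm\eta\subset\T N$ be the associated $k$-contact distribution. By Proposition \ref{proposition}, the diagonal prolongations $X_1^{[\ell+1]},\ldots,X_r^{[\ell+1]}$ on $N^{\ell+1}$ are linearly independent at a generic point and span a Lie algebra isomorphic to $V^X$ (the diagonal-prolongation map is a Lie algebra morphism, by the lemmas cited before Proposition \ref{proposition}), so $X^{[\ell+1]}$ is again a Lie system. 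It remains to produce a $k(\ell+1)$-contact form on $N^{\ell+1}$ turning this Vessiot--Guldberg Lie algebra into $\bm\eta$-Hamiltonian vector fields.

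The key observation is that around a generic point of $N^{\ell+1}$ the distribution
$$
\widehat{\mathcal{D}}:=\bigoplus_{a=0}^{\ell}\operatorname{pr}_0^{a*}\mathcal{D}\subset\T N^{\ell+1},
$$
i.e. the distribution whose fibre over $(x_{(0)},\ldots,x_{(\ell)})$ is $\mathcal{D}_{x_{(0)}}\oplus\cdots\oplus\mathcal{D}_{x_{(\ell)}}$, is a $k(\ell+1)$-contact distribution. To see this I would verify the two conditions of Theorem \ref{Prop:LocalEqu}. Maximal non-integrability: the curvature map $\rho$ of $\widehat{\mathcal{D}}$ is block-diagonal with blocks the curvature maps of $\mathcal{D}$ on each factor (Lie brackets of vector fields tangent to different factors vanish), and each block is non-degenerate by Proposition \ref{Prop:MaxNonkCon}; hence $\rho$ is non-degenerate. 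Existence of a transverse integrable commuting family of Lie symmetries: if, locally around a point of $N$, ${\bf S}=(S_1,\ldots,S_k)$ is the integrable $k$-vector field of Lie symmetries of $\mathcal{D}$ with $\langle S_1,\ldots,S_k\rangle\oplus\mathcal{D}=\T N$ provided by Theorem \ref{Prop:LocalEqu} (one may take the Reeb vector fields of $\bm\eta$), then the $k(\ell+1)$ vector fields obtained by lifting each $S_\gamma$ to each of the $\ell+1$ factors — that is, $\{\,\widehat{S}_\gamma^{\,a}\,:\,\gamma=1,\ldots,k,\ a=0,\ldots,\ell\,\}$ where $\widehat{S}_\gamma^{\,a}$ acts as $S_\gamma$ on the $a$-th factor and trivially on the others — commute among themselves (lifts on distinct factors commute, lifts on the same factor commute because ${\bf S}$ is integrable), are Lie symmetries of $\widehat{\mathcal{D}}$, and together with $\widehat{\mathcal{D}}$ span $\T N^{\ell+1}$. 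Thus $\widehat{\mathcal{D}}$ is a $k(\ell+1)$-contact distribution, and by Definition \ref{def:k-contact-distribution} it is locally $\ker\bm{\widehat\eta}$ for a $k(\ell+1)$-contact form $\bm{\widehat\eta}$; concretely one can glue $\bm{\widehat\eta}=\sum_{a=0}^{\ell}\sum_{\gamma=1}^{k}\operatorname{pr}_0^{a*}\eta^\gamma\otimes e_{a,\gamma}$, with $\{e_{a,\gamma}\}$ a basis of $\mathbb{R}^{k(\ell+1)}$, when $\bm\eta$ is globally defined.

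Finally I would check that each $X_\alpha^{[\ell+1]}$ is a $\bm{\widehat\eta}$-Hamiltonian vector field, i.e. a Lie symmetry of $\widehat{\mathcal{D}}$. Since $X_\alpha$ is an $\bm\eta$-Hamiltonian vector field on $N$, it is a Lie symmetry of $\mathcal{D}$, so $[X_\alpha,Y]$ takes values in $\mathcal{D}$ for every $Y$ valued in $\mathcal{D}$; applying this on each factor separately and using that brackets of vector fields tangent to distinct factors vanish, one gets $[X_\alpha^{[\ell+1]},\widehat Y]$ valued in $\widehat{\mathcal{D}}$ for every $\widehat Y$ valued in $\widehat{\mathcal{D}}$, as the diagonal prolongation of $X_\alpha$ restricts on the $a$-th factor to $X_\alpha$ acting on that factor. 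Hence $X^{[\ell+1]}$ has a Vessiot--Guldberg Lie algebra of $\bm{\widehat\eta}$-Hamiltonian vector fields and $(N^{\ell+1},\bm{\widehat\eta},X^{[\ell+1]})$ is a $k(\ell+1)$-contact Lie system.

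I expect the main obstacle to be the bookkeeping for the transversality/commuting-symmetry condition and, more subtly, the \emph{genericity}: linear independence of the prolonged vector fields, regularity of $\widehat{\mathcal{D}}$, and the local existence of the adapted frame ${\bf S}$ only hold on an open dense subset, so the statement should be understood (as is customary in this paper, cf. the remarks after the definition of superposition rule) in a local/generic sense; one has to make sure the same open set can be used for all the conditions simultaneously, which follows by intersecting finitely many open dense sets.
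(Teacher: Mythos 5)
Your argument is correct, but it follows a genuinely different route from the paper's. You establish that the product distribution $\widehat{\mathcal{D}}=\bigoplus_{a}\mathfrak{pr}_0^{a*}\ker\bm\eta$ is a $k(\ell+1)$-contact distribution by invoking the distributional characterisation (Theorem \ref{Prop:LocalEqu}): block-diagonality of the curvature map plus non-degeneracy of each block via Proposition \ref{Prop:MaxNonkCon}, and transversality via the lifted Reeb vector fields; the Hamiltonian character of the prolonged vector fields is then immediate from the Lie-symmetry definition of $\bm\eta$-Hamiltonian vector fields. The paper instead writes down the form $\bm\eta_{N^{\ell+1}}=\sum_{\alpha=0}^{\ell}\mathfrak{pr}_0^{\alpha*}\bm\eta_N$ at once and verifies the three conditions of Definition \ref{dfn:k-contact-manifold} directly (the kernels of $\bm\eta_{N^{\ell+1}}$ and $\d\bm\eta_{N^{\ell+1}}$ are intersections of the pulled-back kernels, with the right corank and rank and trivial intersection), and then computes the contractions $\iota_{X_i^{[\ell+1]}}\bm\eta_{N^{\ell+1}}$ and $\iota_{X_i^{[\ell+1]}}\d\bm\eta_{N^{\ell+1}}$ explicitly, obtaining the $\bm\eta_{N^{\ell+1}}$-Hamiltonian $k(\ell+1)$-functions $\bm h^i=\sum_{\alpha=0}^{\ell}\bm h^i_\alpha$. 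What your route buys is economy: no explicit Hamiltonian functions are needed, and the argument sits naturally within the paper's distributional philosophy. What the paper's route buys is exactly those explicit Hamiltonian functions, which are used immediately afterwards (commutation of the prolonged Hamiltonian functions and the derivation of constants of motion for $X^{[\ell+1]}_{\bm h}$). Two small remarks: your genericity caveat is unnecessary here, since by definition a $k$-contact Lie system carries a globally defined (co-oriented) $k$-contact form, so $\widehat{\mathcal{D}}$ is regular everywhere, the Reeb vector fields and hence your transverse frame exist globally, and $\bm{\widehat\eta}$ is global — no open dense subsets need intersecting; likewise the appeal to Proposition \ref{proposition} (linear independence of prolongations at a generic point) is not needed for the statement, since the diagonal prolongation being a Lie algebra morphism already makes $X^{[\ell+1]}$ a Lie system whose smallest Lie algebra consists of $\bm{\widehat\eta}$-Hamiltonian vector fields (using Proposition \ref{Prop:EtaHam}).
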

\begin{proof}
Let $X_{1}, \ldots, X_{r}$ span the smallest Lie algebra $V^{X}$ of the $k$-contact Lie system $(N,\bm\eta_{N},X)$. 
 There exists the $k(\ell+1)$-contact form 
$$
\bm\eta_{N^{\ell+1}}=\sum_{\alpha=0}^{\ell}\bm\eta_{N}^{\alpha}=\sum_{\alpha=0}^{\ell}\sum_{\beta=1}^{k}\eta^{\alpha}_{\beta}\otimes e_{\alpha k+\beta},
$$
where $\bm\eta^{\alpha}_{N}=\mathfrak{pr}^{\alpha*}_0\bm\eta_N$. Indeed, $$\ker\bm\eta_{N^{\ell+1}}=\ker \bm\eta^0_{N}\cap \ker \bm\eta^1_{N}\cap\dotsb\cap \ker\bm\eta_N^{\ell},$$ 
$$\ker \d \bm\eta_{N^{\ell+1}}=\ker \d\bm\eta^0_{N}\cap \ker \bm\d\eta^1_{N}\cap\dotsb\cap \ker\d\bm\eta_N^{\ell}$$ 
and $\ker\bm\eta_{N^{\ell+1}} \cap\ker \d \bm\eta_{N^{\ell+1}}=0$. Moreover, $\ker \bm \eta_{N^{\ell+1}}$ has corank $(\ell+1) k$ and $\ker \d \bm \eta_{N^{\ell+1}}$ has rank $k(\ell+1)$. Hence, we have obtained a $k(\ell+1)$-contact form.

By contracting each $X_i^{[\ell+1]}=\sum_{\alpha=0}^\ell X^{(\alpha)}_i$, where  $X^{(\alpha)}_i=X_i(x_{(\alpha)})$ and $i=1,\ldots, r$, with $\bm\eta_{N^{\ell+1}}$ and its differential

$$
\iota_{X_i^{[\ell ]}}\bm\eta_{N^{\ell }}=\sum_{\alpha, \beta=0}^{\ell}\iota_{X_i^{(\alpha)}}\bm\eta^{\beta}_{N}= \sum_{\alpha=0}^{\ell}\iota_{X_i^{(\alpha)}}\bm\eta^{\alpha}_{N}=-\sum_{\alpha=0}^{\ell}\bm h^i_{\alpha}, \qquad i=1,\ldots, r,
$$
$$
\iota_{X_i^{[\ell]}}\d\bm\eta_{N^{\ell+1}}=\sum_{\alpha=0
}^{\ell}\iota_{X_i^{(\alpha)}}\d\bm\eta^{\alpha}_{N}=\sum_{\alpha=0}^{\ell }\d\bm h^i_{\alpha}-\sum_{\mu=0}^{\ell}\sum_{\beta=1}^{k}R_{\beta}^{\alpha}\bm h_{\alpha}^i\eta_{\beta}^{\alpha}, \qquad i=1,\ldots,r,
$$
where $R_{\beta}^{\alpha}$, for $\beta=1,\ldots, k,$ are Reeb vector fields on $(\alpha+1)$-th copy of $N$ in $N^{\ell+1}$. Thus, we obtain that each prolongation is an $\bm \eta_{N^{\ell+1}}$-Hamiltonian vector field with the $\bm \eta_{N^{\ell+1}}$-Hamiltonian $k(\ell+1)$-function $\bm h^i=\sum_{\alpha=0}^{\ell }\bm h^i_{\alpha}$ for $ i=1,\ldots,r.$ .
\end{proof}

The reason why the procedure for the diagonal prolongations of contact Lie systems relied on passing to the use of Jacobi setting was because this has a good diagonal prolongation related to Jacobi manifolds. This is no longer necessary in the above, more natural, realm. 

There is another relevant fact about the fact that the diagonal prolongations to $N_{\ell+1}$  of $\bm \eta$-Hamiltonian vector fields are $\bm \eta_{N^{\ell+1}}$-Hamiltonian. Consider a $k$-contact Lie system $(M,\bm\eta,X)$ and an $\bm \eta$-Hamiltonian $k$-function $\bm C$. If $\{h^\theta,C^\theta\}
_\theta=0$, then the $\bm \eta_{N^{\ell+1}}$-Hamiltonian functions of the diagonal prolongations to $N^{\ell+1}$ of $X_{\bm h}$ and $X_{\bm C}$ do also commute, which can be used to obtain constants of motion of  $X^{[\ell+1]}_{\bm h}$.  This may have applications to obtain superposition rules for $X_{\bm h}$, for instance.

\section{Applications of {\it k}-contact Lie systems}\label{Sec:applications}

This section provides several applications of $k$-contact Lie systems and illustrates the results obtained in previous sections.  

\subsection{A control Lie system}
Let us endow a Lie system related to control systems with a $k$-contact form. Let us consider the system of differential equations on $\mathbb{R}^5$ given by
\begin{equation}\label{eq:Con}
\frac{\d x}{\d t} = \sum_{\alpha=1}^5b_\alpha(t)X_\alpha\,,
\end{equation}
where $b_1(t),\ldots,b_5(t)$ are arbitrary $t$-dependent functions and
$$
\begin{gathered}
 X_1=\frac{\partial}{\partial x_1}\,,\qquad X_2=\frac{\partial}{\partial x_2}+x_1\frac{\partial}{\partial x_3}+x_1^2\frac{\partial}{\partial x_4}+2x_1x_2\frac{\partial}{\partial x_5}\,,\\
 X_3=\frac{\partial}{\partial x_3}+2x_1\frac{\partial}{\partial x_4}+2x_2\frac{\partial}{\partial x_5}\,,\qquad X_4=\frac{\partial}{\partial x_4}\,,\qquad X_5=\frac{\partial}{\partial x_5}.
 \end{gathered}
$$

The above vector fields span a nilpotent Lie algebra $V_c$ of vector fields whose non-vanishing commutation relations read
 \[
    [X_1,X_2]=X_3\,,\qquad [X_1,X_3]=2X_4\,,\qquad [X_2,X_3]=2X_5\,.
 \]
This makes  \eqref{eq:Con} into a Lie system related to a $t$-dependent vector field $\sum_{\alpha=1}^5b_\alpha(t)X_\alpha$ as proved in \cite{Ram_02}. The initial motivation to study \eqref{eq:Con} comes from the fact that it covers as a particular case the system of differential equations on $\mathbb{R}^5$ of the form 
\begin{equation}\label{eq:controlnewold}
    \frac{\d x_1}{\d t}=b_1(t)\,,\quad \frac{\d x_2}{\d t}=b_2(t)\,,\quad
    \frac{\d x_3}{\d t}=b_2(t)x_1\,,\quad \frac{\d x_4}{\d t}=b_2(t)x_1^2\,, \quad \frac{\d x_5}{\d t}=2b_2(t)x_1x_2\,,
\end{equation}
where $b_1(t)$ and $b_2(t)$ are arbitrary $t$-dependent functions whose interest is due to their relation to certain control problems \cite{Nik_00,Ram_02}. Let us define $X_c=b_1(t)X_1+b_2(t)X_2$.

Since $X_1\wedge\dotsb\wedge X_5\neq 0$ and $\dim V_c=5$, one obtains a locally automorphic Lie system $(\mathbb{R}^5,X_c,V_c)$. 
Then, it can be proved that the Lie algebra of Lie symmetries of the vector fields of $V_c$ is isomorphic to it \cite{GLMV_19}. Indeed,  consider the Lie algebra of Lie symmetries of $V_c$ given by vector fields
\begin{gather}
Y_1 = \frac{\partial}{\partial x_1} + x_2\frac{\partial}{\partial x_3} + 2x_3\frac{\partial}{\partial x_4} + x_2^2\frac{\partial}{\partial x_5}\,,\qquad Y_2=\frac{\partial}{\partial x_2}+2x_3\frac{\partial}{\partial x_5}\,,\\Y_3=\frac{\partial}{\partial x_3}\,,\qquad Y_4=\frac{\partial}{\partial x_4}\,,\qquad Y_5=\frac{\partial}{\partial x_5}.
\end{gather}
More exactly, $[Y_i,X_j]=0$ for $i,j=1,\ldots,5$. Moreover, $Y_1,\ldots,Y_5$ span a Lie algebra with opposite structure constants to those of $X_1,\ldots,X_5$. 
The $Y_1,\ldots,Y_5$ admit the corresponding dual one-forms given by
\begin{gather}
    \Upsilon^1 = \d x_1\,,\qquad \Upsilon^2 = \d x_2\,,\qquad \Upsilon^3 = -x_2\d x_1 + \d x_3\,,\\
    \Upsilon^4 = -2x_3\d x_1 + \d x_4\,,\qquad 
    \Upsilon^5 = -x_2^2\d x_1-2x_3\d x_2 + \d x_5\,.
\end{gather}

The existence of these dual forms follows by the condition that $Y_1\wedge\dotsb\wedge Y_5$ is non-vanishing on a manifold of dimension five. Additionally,  $\Upsilon^1,\dotsc,\Upsilon^5$ are invariant relative to the vector fields $X_1,\dotsc,X_5$, namely $\Lie_{X_i}\Upsilon^j=0$ for $i,j=1,\ldots,5$. The differentials of $\Upsilon^1,\dotsc,\Upsilon^5$ depend on the commutation relations of the vector fields $X_1,\dotsc,X_5$, namely
$$
\d\Upsilon^\gamma = \frac 12 \sum_{\alpha,\beta=1}^5c_{\alpha\beta}\,\!^\gamma\Upsilon^\alpha\wedge\Upsilon^\beta\,,\qquad \alpha=1,\ldots,5\,,
$$
where $c_{\alpha\beta}\,\!^\gamma$ are the structure constants of our problem, namely $[X_\alpha,X_\beta]=\sum_{\gamma=1}^5c_{\alpha\beta}\,\!^\gamma X_\gamma$ for $\alpha,\beta=1,\ldots,5.$
In particular, for every differential one-form $\Upsilon \in \langle\Upsilon^1,\ldots,\Upsilon^5 \rangle$, its differential is both closed and invariant relative to the vector fields $X_1,\ldots,X_5$. Consequently, these vector fields become $\d \Upsilon$-Hamiltonian relative to such presymplectic forms $\Upsilon$.  

One has the following relations
$$
 \d \Upsilon^1=0\,,\qquad \d\Upsilon^2=0\,,\qquad\d \Upsilon^3 = \Upsilon^1\wedge\Upsilon^2\,,\qquad\d\Upsilon^4 = 2\Upsilon^1\wedge \Upsilon^3\,,\qquad\d \Upsilon^5 = 2\Upsilon^2\wedge \Upsilon^3\,.
 $$
Let us prove that $\bm\eta_c=\Upsilon^{4}\otimes e_1+\Upsilon^{5}\otimes e_2$ gives rise to a two-contact form. Indeed,
\[
\ker \bm\eta_c=\langle Y_1,Y_2,Y_3\rangle=\left\langle\frac{\partial}{\partial x_1} + x_2\frac{\partial}{\partial x_3} + 2x_3\frac{\partial}{\partial x_4} + x_2^2\frac{\partial}{\partial x_5}\ ,\ \frac{\partial}{\partial x_2}+2x_3\frac{\partial}{\partial x_5}\ ,\ \frac{\partial}{\partial x_3} \right\rangle\,. 
\]
Furthermore, $\ker\d\bm\eta_c=\ker\d \Upsilon^{4}\cap\ker\d \Upsilon^{5}$ is a distribution spanned by
\[
\ker\d\bm \eta_c=\langle Y_4,Y_5\rangle=\left<\frac{\partial}{\partial x_{4}},\frac{\partial}{\partial x_{5}} \right>.
\]
Clearly, $\ker\d\bm\eta_c\cap \ker\bm\eta_c=\{0\}$ and  $\bm\eta_c$ is a two-contact form. Moreover, $R_1=Y_4,R_2=Y_5$ are the associated Reeb vector fields. Let us now find the corresponding $\bm\eta_c$-Hamiltonian two-functions  $\bm h_{1},\ldots,\bm h_{5}$. By verifying the conditions
\begin{equation}
\label{eq:conditions}
\iota_{X_i}\d\Upsilon^\mu=\d h_i^\mu-\sum_{\alpha=4}^5(R_{\alpha-3} h^\mu_i)\Upsilon^\alpha, \qquad
\iota_{X_i}\Upsilon^\mu=-h^\mu_i,\qquad \mu=4,5,\qquad i=1,\ldots,5,
\end{equation}
 for $X_{1},\ldots,X_{5}$, we get that
\begin{gather*}
\bh_{1}=2x_{3}   e_{1} + x_{2}^{2}   e_{2}\,, \qquad 
\bh_{2}=-x_{1}^{2}   e_{1} + (2x_{3}-2x_{1}x_{2})   e_{2}\,,\\
\bh_{3}=-2x_{1}   e_{1} - 2x_{2}   e_{2}\,, \qquad 
\bh_{4}=-   e_{1}\,,  \qquad
\bh_{5}=-   e_{2}\,.
\end{gather*}

In particular, $X_4=Y_4$ and $X_5=Y_5$, therefore, $X_4$ and $X_5$ have constant Hamiltonian two-functions, as it corresponds to two-contact Reeb vector fields. It is worth noting that all the above $\bm\eta_c$-Hamiltonian two-functions are first integrals of the Reeb vector fields, thus \eqref{eq:Con} becomes a two-contact projectable Lie system relative to $\bm \eta_c$. The system \eqref{eq:Con} gives rise to a two-contact Lie--Hamiltonian system  $(\mathbb{R}^5,{\bm \eta}_c,{\bm h}=\sum_{\alpha=1}^5b_\alpha(t){\bm h}_\alpha)$ with a two-contact Lie--Hamilton algebra $\mathfrak{W}_c=\langle {\bm h}_1,\ldots,{\bm h}_5\rangle$ relative to ${\bm \eta}_c$. It is worth noting that $X_c$ is invariant relative to the Reeb vector fields of ${\bm \eta}_c$. Moreover, ${\bm h}$ is also invariant relative to the Reeb vector fields of $\bm \eta_c$. In particular, system \eqref{eq:controlnewold} becomes a projectable two-contact Lie system relative   to ${\bm \eta}_c$ and $(\mathbb{R}^5,\bm\eta_c,{\bm h}_c=b_1(t){\bm h}_1+b_2(t){\bm h}_2)$ is a two-contact Lie--Hamiltonian system for $(\mathbb{R}^5,\bm\eta_c,X_c)$ with a two-contact Lie--Hamilton algebra $\mathfrak{W}_c=\langle {\bm h}_1,\ldots,{\bm h}_5\rangle$. 

It is worth emphasising that whether a given $k$-contact Lie system is projectable relative to a compatible $k$-contact form. To illustrate this, let us show that $X_c$ is not projectable relative to another compatible three-contact form constructed via the methods given in Section \ref{Sec:NewMethods}.  Note that the distribution spanned by $\langle Y_{1},Y_{2}\rangle$ is maximally non-integrable and invariant relative to $X_1,\ldots,X_5$. Moreover, it admits commuting Lie symmetries, namely $X_{3},X_{4},X_{5}$, which turns $\langle Y_{1},Y_{2}\rangle$ into a three-contact distribution.

Let us consider the dual forms to $Y_{1},Y_{2},X_{3},X_{4},X_{5}$, namely 
$$
\left(\eta'\right)^1=\d x_{1}\,, \qquad \left(\eta'\right)^2=\d x_{2}\,, \qquad \left(\eta'\right)^3=-x_{2}\d x_{1}+\d x_{3}\,,
$$
$$
\left(\eta'\right)^{4} =2\left(x_{1}x_{2}- x_{3}\right) \d x_{1}-2x_{1} \d x_{3}+\d x_{4}\,, \qquad \left(\eta'\right)^5=x_{2}^{2}\d x_{1}-2x_{3}\d x_{2}-2x_{2}\d x_{3} + \d x_{5}\,,
$$
whose non-zero differentials read
$$
\d \left(\eta'\right)^{3}=\d x_{1}\wedge \d x_{2}\,, \qquad \d \left(\eta'\right)^{4}=-2x_{1}\d x_{1}\wedge \d x_{2}\,,\qquad 
\d\left(\eta'\right)^{5}=-2x_{2}\d x_{1}\wedge \d x_{2}\,.
$$
One can construct the three-contact form
$$
\bm \eta_c' = \sum_{\alpha=3}^{5}\left(\eta'\right)^{\alpha} \otimes e_{\alpha}\,,
$$
satisfying that $\ker \bm \eta_c'= \langle Y_{1}, Y_{2}\rangle$ and $\ker \d \bm \eta_c'=\langle X_{3},X_{4}, X_{5}\rangle$.
As $X_{3},X_{4},X_{5}$ span $\ker \d \bm \eta_c'$ and are dual to $\bm\eta_c'$, they are Reeb vector fields. 

Note that $X_{1}\ldots,X_{5}$ are $\bm\eta_c'$-Hamiltonian vector fields with $\bm\eta_c'$-Hamiltonian three-functions given by
$$
\bm h_{1}'=x_{2}  e_{3}-2\left(x_{1}x_{2}-x_{3}\right)   e_{4} - x_{2}^{2}  e_{5}\,, \qquad \bm h_{2}'=-x_{1}  e_{3} +x_{1}^{2}  e_{4} +2x_{3}  e_{5}.
$$
$$
\bm h_{\mu}'= -   e_{\mu}\,, \qquad \mu =3,4,5\,.
$$
Clearly not all $\bm\eta_c'$-Hamiltonian three-functions are first integrals of all Reeb vector fields, e.g. $X_{3}{\bm h}_{2}'=-2e_{5}$.
One can verify that $\bm\eta_c$-Hamiltonian three-functions close a Lie algebra under the Lie bracket given by \eqref{eq:Lie_bracket_Ham} with opposite constant as their respective $\bm\eta_c$-Hamiltonian vector fields. Indeed, the non-vanishing commutation relations read
$$
\{\bm h_{1}',\bm h_{2}'\}=-\bm h_{3}'\,, \qquad \{ \bm h_{1}',\bm h_{3}'\}=-2\bm h_{4}'\,, \qquad \{\bm h_2',\bm h_{3}'\}=-2\bm h_{5}'\,.
$$
Note that ${\bm h}'_4$ and ${\bm h}'_5$ are dissipated functions for the three-contact Lie system  $(\mathbb{R}^5,\bm\eta'_c,X_c)$. In fact, their associated ${\bm \eta}_c'$-Hamiltonian vector fields are Lie symmetries of the corresponding three-contact Lie system.

Note that $[V_c,[V_c,[V_c,V_c]]]=0$ and \eqref{eq:controlnewold} is projectable relative to the Reeb vector fields of $\bm \eta_c$. Using Remark \ref{Rem:SpecialCases}, one obtains that, if $b_1(t),b_2(t)$ are assumed to be constants, then
$$
\frac{\d^3{\bm I}}{\d t^3}=0,\qquad \forall {\bm I}\in \mathfrak{W}_c,
$$
which gives lots of $t$-dependent constants of motion of such systems. Hence, every ${\bm I}$ is related to a generator of constants of motion of order, at most, three. In particular, the form of the ${\bm I}\in \mathfrak{M}_c$, implies that solutions to \eqref{eq:controlnewold} are such that their coordinates $x_1(t),x_2(t),x_3(t)$ are polynomial functions on $t$ up to order two. Moreover, one also gets that
$$
\frac{\d^3(x_1x_2)}{\d t^3}=0,
$$
which gives additional restrictions on the form of $x_1(t),x_2(t)$ for particular solutions. 

\subsection{The complex Schwarz equation}

The Schwarz derivative plays a significant role in studying the linearisation in $t$-dependent systems, projective systems, the study of special functions, etcetera \cite{GR_07, Hil_97, Leh_79}. It is particularly related to the $t$-dependent complex differential equation given by
\begin{equation}\label{Eq:ComplSchwa}
    \frac{\mathrm{d} z}{\mathrm{d} t} = v\,,\qquad \frac{\mathrm{d} v}{\mathrm{d} t} = a\,,\qquad\frac{\mathrm{d} a}{\mathrm{d} t} = \frac 32\frac{a^2}{v} + 2b(t)v\,,\qquad z,v,a\in \mathbb{C}\,,\qquad t\in \mathbb{R},
\end{equation}
for a certain complex $t$-dependent function $b(t)$. System \eqref{Eq:ComplSchwa} can be understood as a complex analogue of the Lie system on $\mathcal{O}=\{(z,v, a)
\in \T^2\mathbb{R}:v
\neq 0\}$ studied in \cite{LV_15}. It represents the complex equation $\{z,t\}_{sc}=2b(t)$, where $\{\cdot,\cdot\}_{sc}$ denotes the so-called {\it Schwarz derivative}.
$$
\{z,t\}_{sc}=\frac{\d^3 z}{\d t^3}\left(\frac{\d z}{\d t}\right)^{-1}-\frac 32 \left(\frac{\d^2z}{\d t^2}\right)^2\left(\frac{\d z}{\d t}\right)^{-2}=2b(t)\,.
$$
In fact, \eqref{Eq:ComplSchwa} more accurately describes the standard Schwarz derivative compared to the real version in 
\cite{LV_15}. Note that \eqref{Eq:ComplSchwa} is a differential equation on $\mathcal{O}_{sc}=\{(z,v,a)
\in \T^2\mathbb{C}:v
\neq 0\}$.

Although \eqref{Eq:ComplSchwa} can be considered as a complex Lie system, we will not develop the due theory here (which requires defining complexifications of tangent bundles and other related notions). Moreover, it can be proved that the complex approach simplifies only a part of the theory, making other computations more complicated. 

In real coordinates,
\[
v_1 = \Rp (z)\,,\quad v_2 = \Ip(z)\,,\quad v_3 = \Rp(v)\,,\quad v_4 = \Ip(v)\,,\quad v_5 = \Rp(a)\,,\quad v_6 = \Ip(a)\,,
\]
system \eqref{Eq:ComplSchwa} is associated with the $t$-dependent vector field
$$
X_{sc}=X_1+2b_R(t)X_2+2b_I(t)X_3\,,
$$
where $b_R(t) = \Rp(b(t))$, $b_I(t) = \Ip(b(t))$, and
{\footnotesize\begin{gather*}
   X_1 = v_3\frac{\partial}{\partial v_{1}} + v_4\frac{\partial}{\partial v_{2}} + v_5\frac{\partial}{\partial v_{3}} + v_6\frac{\partial}{\partial v_{4}} + \frac{3}{2}\frac{2v_4v_5v_6 + (v_5^2 - v_6^2)v_3}{v_3^2 + v_4^2}\frac{\partial}{\partial v_{5}} + \frac{3}{2}\frac{2v_3v_5v_6 - v_4(v_5^2 - v_6^2)}{v_3^2 + v_4^2}\frac{\partial}{\partial v_{6}}\,,\\ 
        X_2 = v_3\frac{\partial}{\partial v_{5}} + v_4\frac{\partial}{\partial v_{6}}\,,\qquad           
    X_3 = -v_4\frac{\partial}{\partial v_{5}} + v_3\frac{\partial}{\partial v_{6}}\,,\\
        X_4 = -v_3\frac{\partial}{\partial v_{3}} - v_4\frac{\partial}{\partial v_{4}} - 2v_5\frac{\partial}{\partial v_{5}} - 2v_6\frac{\partial}{\partial v_{6}}\,,\qquad                                       
    X_5 = v_4\frac{\partial}{\partial v_{3}} - v_3\frac{\partial}{\partial v_{4}} + 2v_6\frac{\partial}{\partial v_{5}} - 2v_5\frac{\partial}{\partial v_{6}}\,,\\
        X_6 = -v_4\frac{\partial}{\partial v_{1}} + v_3\frac{\partial}{\partial v_{2}} - v_6\frac{\partial}{\partial v_{3}} + v_5\frac{\partial}{\partial v_{4}} - \frac{3}{2}\frac{2v_3v_5v_6 - v_4(v_5^2 - v_6^2)}{(v_3^2 + v_4^2)}\frac{\partial}{\partial v_{5}} + \frac{3}{2}\frac{2v_4v_5v_6 + v_3(v_5^2 - v_6^2)}{(v_3^2 + v_4^2)}\frac{\partial}{\partial v_{6}}\,.
\end{gather*}}
These vector fields, defined for $v_3^2+v_4^2\neq 0$ satisfy the following commutation relations
\begin{align}
    [X_1,X_2] &= X_4\,, & [X_1,X_3] &= X_5\,, & [X_1,X_4] &= X_1\,, & [X_1,X_5] &= X_6\,, & [X_1,X_6] &= 0\,,\\
    &&[X_2,X_3] &= 0\,, & [X_2,X_4] &= -X_2\,, & [X_2,X_5] &= -X_3\,, & [X_2,X_6] &= -X_5\,,\\
    &&&&[X_3,X_4] &= -X_3\,, & [X_3,X_5] &= X_2\,, & [X_3,X_6] &= X_4\,,\\
    &&&&&&[X_4,X_5] &= 0\,, & [X_4,X_6] &= -X_6\,,\\
    &&&&&&&&[X_5,X_6] &= X_1\,,
\end{align}
Then, $X_1,\ldots,X_k$ span a Lie algebra $V_{sc}$ that is
isomorphic to $\mathfrak{sl}_2(\mathbb{C})=\mathbb{C}\otimes \mathfrak{sl}_2$ as a real vector space. Indeed, $\langle X_1,X_2,X_4\rangle \simeq \mathfrak{sl}_2(\R)\simeq\langle X_3,X_4,X_6\rangle $. Additionally, $\mathbb{C}\otimes \mathfrak{sl}_2(\R)$ decomposes as $\langle X_1,X_4,X_2\rangle\oplus \langle X_6,X_5,X_3\rangle$. Then,  $V_{sc}=E_{-1}\oplus E_0\oplus E_1$, where $E_{-1}=\langle X_6,X_1\rangle$, $E_0=\langle X_4,X_5\rangle$, and $E_1=\langle X_3,X_2\rangle$, with $[E_i,E_j]=E_{i+j}$, where the sum $i+j$ is taken relative to the additive group $
\{-1,0,1\}$ and the direct sum is relative to subspaces of $V_{sc}$. Relevantly, $X_1\wedge\dotsb\wedge X_6\neq 0$ on every point of $\mathcal{O}_{sc}$. This allows us to map, at least locally, these vector fields diffeomorphically into the right-invariant vector fields of a basis of a Lie group with Lie algebra isomorphic to $V_{sc}$. This relation can be used to obtain Lie symmetries of these vector fields or differential forms that are invariant relative to $X_1,\ldots,X_6$.

Meanwhile, the Lie algebra of symmetries of the system \eqref{Eq:ComplSchwa} reads 
\begin{equation}\label{eq:Liealgebra}
\begin{gathered}
2Y_1 =(v_1^2-v_2^2)\frac{\partial}{\partial v_1}+2v_1v_2\frac{\partial}{\partial v_2}+(2v_1v_3-2v_2v_4)\frac{\partial}{\partial v_3} + 2(v_3v_2+v_1v_4)\frac{\partial}{\partial v_4}\\ \quad + 2(v_3^2+v_1v_5-v_4^2-v_2v_6)\frac{\partial}{\partial v_5}+2(v_3^2+v_1v_5-v_4^2-v_2v_6)\frac{\partial}{\partial v_6}\,,
\\Y_2=\frac{\partial}{\partial v_1}\,,\qquad
Y_3 = \frac{\partial}{\partial v_2}\,,\\
Y_4 = -v_1\frac{\partial}{\partial v_1}-v_2\frac{\partial}{\partial v_2}-v_3\frac{\partial}{\partial v_3}-v_4\frac{\partial}{\partial v_4}-v_5\frac{\partial}{\partial v_5}-v_6\frac{\partial}{\partial v_6}\,,\\
Y_5 = v_2\frac{\partial}{\partial v_1}-v_1\frac{\partial}{\partial v_2}+v_4\frac{\partial}{\partial v_3}-v_3\frac{\partial}{\partial v_4}+v_6\frac{\partial}{\partial v_5}-v_5\frac{\partial}{\partial v_6}\,,\\
2Y_6 = -2v_1v_2\frac{\partial}{\partial v_1}+(v_1^2-v_2^2)\frac{\partial}{\partial v_2}-2(v_2v_3+v_1v_4)\frac{\partial}{\partial v_3}+(2v_1v_3-2v_2v_4)\frac{\partial}{\partial v_4} \\
\quad -2(2v_3v_4+v_2v_5+v_1v_6)\frac{\partial}{\partial v_5}+2(v_3^2-v_4^2+v_1v_5-v_2v_6)\frac{\partial}{\partial v_6}\,.
\end{gathered}
\end{equation}
The commutation relations are
\begin{align}
    [Y_1,Y_2] &= Y_4\,, & [Y_1,Y_3] &= Y_5\,, & [Y_1,Y_4] &= Y_1\,, & [Y_1,Y_5] &= Y_6\,, & [Y_1,Y_6] &= 0\,,\\
    &&[Y_2,Y_3] &= 0\,, & [Y_2,Y_4] &= -Y_2\,, & [Y_2,Y_5] &= -Y_3\,, & [Y_2,Y_6] &= -Y_5\,,\\
    &&&&[Y_3,Y_4] &=-Y_3\,, & [Y_3,Y_5] &=Y_2\,, & [Y_3,Y_6] &= Y_4\,,\\
    &&&&&&[Y_4,Y_5] &= 0\,, & [Y_4,Y_6] &= -Y_6\,,\\
    &&&&&&&&[Y_5,Y_6] &= Y_1\,.
\end{align}
Note that $Y_1,\ldots,Y_6$ admit identical structure constants as $X_1,\ldots,X_6$. One can choose one-forms $\Upsilon^1,\ldots,\Upsilon^6$ to be the dual to $Y_1,\ldots,Y_6$. These differential forms are locally diffeomorphic to a basis of right-invariant one-forms on a Lie group. The existence of these dual forms is ensured by the condition $Y_1\wedge \dotsb \wedge Y_6\neq 0$ and the dimension of the manifold $\mathbb{R}^6$. These dual forms remain invariant concerning the Lie derivatives of the vector fields $X_1\ldots, X_6$, i.e. $\Lie_{X_i}\Upsilon^j=0$ for $i,j=1,\ldots,6$.

Moreover, the differential forms $\mathrm{d}\Upsilon^1,\ldots, \mathrm{d}\Upsilon^6$, or their linear combinations, are closed differential forms that are invariant relative to the Lie derivatives along $X_1,\ldots, X_6$. Moreover (see Lemma \ref{Lem:MaurerCartan} in the Appendix), one has
$$
\d\Upsilon^i=-\frac 12\sum_{j,k=1}^{6}c_{jk}\,\!^i\Upsilon^j\wedge \Upsilon^k,\qquad i=1,\ldots,6\,.
$$
In particular, 
\begin{align}
    \mathrm{d}\Upsilon^1 &= -\Upsilon^5\wedge\Upsilon^6 - \Upsilon^1\wedge \Upsilon^4\,, & \mathrm{d}\Upsilon^2 &= -\Upsilon^3\wedge \Upsilon^5 - \Upsilon^4\wedge\Upsilon^2\,,\\
    \mathrm{d}\Upsilon^3 &= -\Upsilon^4\wedge\Upsilon^3-\Upsilon^5\wedge\Upsilon^2\,, & \mathrm{d}\Upsilon^4 &= -\Upsilon^1\wedge\Upsilon^2-\Upsilon^3\wedge\Upsilon^6\,,\\
     \mathrm{d}\Upsilon^5 &= -\Upsilon^1\wedge\Upsilon^3-\Upsilon^6\wedge\Upsilon^2\,, & \mathrm{d}\Upsilon^6 &= -\Upsilon^1\wedge\Upsilon^5-\Upsilon^6\wedge\Upsilon^4\,.
\end{align}
Hence,
$$
\sum_{\alpha=1}^6\d \Upsilon^\alpha\otimes e_\alpha=-\frac12\sum_{ \alpha,\beta, \gamma =1}^{6} c_{\beta\gamma}\,^\alpha\Upsilon^\beta\wedge \Upsilon^\gamma\otimes e_\alpha=:-\frac12\sum_{\mu,
\nu=1}^{6} [\Upsilon^\mu\otimes e_\mu,\Upsilon^\nu\otimes e_\nu]
$$
Then, for $\bm\Upsilon=\sum_{\alpha=1
}^6\Upsilon^\alpha\otimes e_\alpha$, one has 
$$\d\bm\Upsilon=-\frac 12[\bm\Upsilon,\bm\Upsilon]\Rightarrow \d\bm\Upsilon +\frac 12[\bm\Upsilon,\bm\Upsilon]=0\,,
$$
where $\{e_1,\ldots,e_6\}$ is a basis of the Lie algebra $\T_e\mathrm{SL}_2(\mathbb{C})$ with the commutation relations of the vector fields $X_1,\ldots,X_6$.  

Let us prove that $\Upsilon^4,\Upsilon^5, \d\Upsilon^4, \d\Upsilon^5$ give rise to a two-contact form $\bm \eta_{sc}=\Upsilon^4\otimes e_4+\Upsilon^5\otimes e_5$. Indeed, note that $Y^4,Y^5$ commute between themselves,  take values in $\ker \d\Upsilon^4\cap \ker \d\Upsilon^5$,  and span the Reeb distribution. Moreover, the space $\langle Y^1,Y^2,Y^3,Y^6\rangle$ span the associated two-contact distribution. It follows that $Y^4,Y^5$ are the Reeb vector fields of our two-contact distribution.
By verifying conditions \eqref{eq:conditions} for $X_{1},\ldots,X_{6}$, we obtain their $\bm\eta_{sc}$-Hamiltonian two-functions, which components are given by Table \ref{tab:eta-hami-fun-ele}. Hence, $(\mathcal{O}_{sc}, \bm \eta_{sc},X_{sc})$ becomes a two-contact Lie system with a two-contact Lie--Hamiltonian system $(\mathcal{O}_{sc},\bm \eta_{sc},{\bm h}_{sc}=
{\bm h}_1+2b_R(t){\bm h}_2+2b_I(t){\bm h}_3$).

\begin{table}[h!]
    
    \begin{center}
    {\footnotesize \begin{tabular}{|c|c|}
        \hline
          $h_{1}^{4}=\frac{\gamma^3}{2}\bigg(\Big[v_2 v_4 \left(v_4^2-3 v_3^2\right)-v_1 \left(v_3^3-3 v_3 v_4^2\right)\Big] v_6^2$ & $h_{1}^{5}=\frac{\gamma^3}{2}\bigg(\Big[v_1 v_4 \left(v_4^2-3 v_3^2\right)+v_2 \left(v_3^3-3 v_3 v_4^2\right)\Big] v_6^2-$ \\      
          $- 2v_6 \Big[v_5\Big(v_1 v_4\left( v_4^2-3 v_3^2\right)+v_2 \left(v_3^3-3 v_3 v_4^2\right)\Big)$&$ 2v_6\Big[v_5\Big(v_2 v_4 \left(v_4^2-3 v_3^2\right)-v_1 \left(v_3^3-3 v_3 v_4^2\right)\Big)$\\
          $+v_4 \left(v_3^2+v_4^2\right)^2\Big]+v_5 \Big[v_{5}\Big(v_1 \left(v_3^3-3 v_3 v_4^2\right)$&$+v_3 \left(v_3^2+v_4^2\right)^2\Big]+v_5 \Big[2 v_4 \left(v_3^2+v_4^2\right)^2$ \\
          $-v_2 v_4 \left(v_4^2-3 v_3^2\right)\Big)-2 v_3 \left(v_3^2+v_4^2\right)^2\Big]\bigg)$&$+\Big(v_1 v_4 \left(v_4^2-3 v_3^2\right)+v_2 \left(v_3^3-3 v_3 v_4^2\right)\Big) v_5\Big]\bigg)$ \\
        \hline
        $h_{2}^{4}=\gamma\left(v_1 v_3+v_2 v_4\right)$ & $h_{2}^{5}=\gamma\left(v_2 v_3-v_1 v_4\right)$\\
        \hline
        $h_{3}^{4}=\gamma\left(v_1 v_4-v_2 v_3\right)$ &$h_{3}^{5}=\gamma\left(v_1 v_3+v_2 v_4 \right)$ \\
        \hline
         $h_{4}^{4}=-\gamma^2\bigg(v_3^4+\left(2 v_4^2-v_1 v_5+v_2 v_6\right) v_3^2$ & $h_{4}^{5}=-\gamma^2\bigg(v_1 \left(-v_6 v_3^2+2 v_4 v_5 v_3+v_4^2 v_6\right)$ \\
        $-2 v_4 (v_2 v_5+v_1 v_6) v_3+v_4^2\left(v_4^2+v_1 v_5-v_2 v_6\right)\bigg)$ &$+v_2 \left(-2 v_4 v_6 v_3+v_4^2 v_5-v_3^2 v_5\right)\bigg)$ \\
        \hline
         $h_{5}^{4}=-\gamma^2 \bigg(v_2 \left(\left(v_3^2-v_4^2\right) v_5+2 v_3 v_4 v_6\right)$&$h_{5}^{5}=-\gamma^2\bigg(v_3^4+\left(2 v_4^2-v_1 v_5+v_2 v_6\right) v_3^2$ \\
         $+v_1 \left(\left(v_3^2-v_4^2\right) v_6-2 v_3 v_4 v_5\right)\bigg)$ &$-2 v_4 (v_2 v_5+v_1 v_6) v_3+v_4^2 \left(v_4^2+v_1 v_5-v_2 v_6\right)\bigg)$\\
         \hline
         $h_{6}^{4}=\frac{\gamma^3}{2}\bigg(v_6^2\Big(v_1 v_4 \left(v_4^2-3 v_3^2\right)+v_2 \left(v_3^3-3 v_3 v_4^2\right)\Big) $&$h_{6}^{5}=\frac{\gamma^3}{2}\bigg(\left(v_2 v_4 \left(v_4^2-3 v_3^2\right)-v_1 \left(v_3^3-3 v_3 v_4^2\right)\right) v_6^2$ \\
         $+2v_{6} \Big[v_5\Big(v_2 v_4 \left(v_4^2-3 v_3^2\right)-v_1 \left(v_3^3-3 v_3 v_4^2\right)\Big) $ &$-2v_6 \Big[v_5\Big(v_1 v_4 \left(v_4^2-3 v_3^2\right)+v_2 \left(v_3^3-3 v_3 v_4^2\right)\Big)$\\
         $+v_3 \left(v_3^2+v_4^2\right)^2\Big]+v_5 \Big[-v_5\Big(v_1 v_4 \left(v_4^2-3 v_3^2\right)$&$+v_4 \left(v_3^2+v_4^2\right)^2\Big]+v_5 \Big[v_5\Big(v_1 \left(v_3^3-3 v_3 v_4^2\right)$\\
         $+v_2 \left(v_3^3-3 v_3 v_4^2\right)\Big)-2 v_4 \left(v_3^2+v_4^2\right)^2\Big]\bigg)$&$-v_2 v_4 \left(v_4^2-3 v_3^2\right)\Big)-2 v_3 \left(v_3^2+v_4^2\right)^2\Big]\bigg)$ \\
         \hline
         
    \end{tabular}}
    \end{center}
    \caption{Elements of $\bm\eta_{sc}$-Hamiltonian functions $\bm h_{1},\ldots,\bm h_{6}$ of the vector fields $X_{1},\ldots,X_{6}$, with $\gamma=\frac{-1}{v_{3}^2+v_{4}^{2}}$, for the complex Schwarz equation.}
    \label{tab:eta-hami-fun-ele}
\end{table}

\subsection{Generalisation to degree three of the Brockett system}\label{Sec:Brockett}

Let us study now two examples of  Lie systems, whose motivation is due to its interest in control theory, Wei--Norman equations, and the theory of Lie systems \cite{Ram_02,Mur_93,MS_93}.  
Both of them are control systems on $\R^8$. 

Let $\{x_1,\ldots,x_8\}$ be linear coordinates in $\R^8$. Consider the system 
\begin{align}
&\tder{x_1}=b_1(t)\,, &&\tder{x_2}=b_2(t)\,,
&&\tder{x_3}=b_2(t) x_1\,,&&\tder{x_4}=b_1(t) x_3\,,		\nonumber\\
&\tder{x_5}=b_2(t) x_3\,, &&\tder{x_6}=b_1(t) x_4\,,
&&\tder{x_7}=b_2(t) x_4\,,&&\tder{x_8}=b_2(t) x_5\,,		
\label{Mur_not_ster_1}
\end{align}
where $b_1(t)$ and $b_2(t)$ are the so-called control functions. From our point of view, they are just arbitrary $t$-dependent functions whose behaviour can lead to interesting features.

The solutions of \eqref{Mur_not_ster_1} are the integral curves of $t$-dependent
vector field $X_{3b}=b_1(t)\, X_1+b_2(t)\, X_2$, with
\begin{equation}X_1=\pd{x_1}+x_3\pd{x_4}+x_4 \pd{x_6}\,,			\qquad X_2=\pd{x_2}+x_1\pd{x_3}+x_3 \pd{x_5}
+x_4 \pd{x_7}+x_5 \pd{x_8}\,.				
\end{equation}
Taking the successive Lie brackets of $X_1,X_2$, we obtain the smallest Lie algebra of vector fields containing $X_1,X_2$. In fact,
$$
\begin{gathered}
X_3=[X_1,\,X_2]=\pd{x_3}-x_1 \pd{x_4}+x_3 \pd{x_7}\,,			 \qquad
X_4=[X_1,\,X_3]=-2 \pd{x_4}+x_1 \pd{x_6}\,,	\\ X_5=[X_2,\,X_3]=[X_1,X_6]=-\pd{x_5}+2 x_1 \pd{x_7}\,,\qquad 
X_6=[X_1,\,X_4]=3\pd{x_6}\,,\\X_7=[X_1,\,X_5]=2\pd{x_7}\,,\qquad
\quad X_8=[X_2,\,X_5]=\pd{x_8}\,,				
\end{gathered}
$$
along with $X_1,X_2$ define an eight-dimensional Lie algebra of vector fields $V_{3b}=\langle X_1,\,\dots,\,X_8\rangle$. It is worth noting that $X_1\wedge\dotsb\wedge X_8\neq 0$ and these vector fields span $\T \mathbb{R}^8$. Hence, $X_{3b}$ gives rise to a locally automorphic Lie system $(\mathbb{R}^8,X_{3b},V_{3b})$.  Moreover, the fact that the vector fields $X_1,\ldots,X_8$ span the tangent space mean that the system is {\it controllable}, which has special relevance in control theory \cite{Re_02,Ja_02}. 

The non-zero commutation relations between the elements of the given basis of $V$ read
$$
\begin{gathered}\,[X_1,\,X_2]=X_3\,,\qquad [X_1,\,X_3]=X_4\,,
\qquad[X_1,\,X_4]=X_6\,,\qquad[X_1,\,X_5]=X_7\,,\\\,[X_2,\,X_3]=X_5\,,\qquad [X_2,\,X_4]=X_7\,,\qquad [X_2,\,X_5]=X_8\,.	
\label{Mur_not_ster_1_alg_camp_vec}
\end{gathered}
$$
This implies that $V_{3b}$ is a nilpotent Lie algebra. 

As the Brockett system \eqref{Mur_not_ster_1} gives rise to a locally automorphic Lie system $(\mathbb{R}^8,X_{3b},V_{3b})$, it can be considered to be locally diffeomorphic to a so-called automorphic Lie system on a Lie group \cite{GLMV_19}. This means that one can  consider the Lie algebra of Lie symmetries of such vector fields, which will span a Lie algebra isomorphic to the previous one \cite{GLMV_19}. In fact, one can consider a basis $Y_1,\ldots,Y_8$ of Lie symmetries of $X_1,\ldots,X_8$ on $\mathbb{R}^8$ taking the same values as $X_1,\ldots,X_8$ at $0\in \mathbb{R}^8$. Hence, the structure constants for $Y_1,\ldots,Y_8$ are opposite to the ones for $X_1,\ldots,X_8$. To illustrate our theory,  the Lie symmetries of the vector fields $X_1,\ldots,X_8$ are spanned by the basis  given by
\begin{eqnarray}
&& Y_1=\frac{\partial}{\partial x_1}+x_2\frac{\partial}{\partial x_3}+\left(x_1x_2-x_3\right)\frac{\partial}{\partial x_4}+\frac{x_2^2}{2}\frac{\partial}{\partial x_5}+\frac {x_1(x_1x_2-x_3)-x_4}2\frac{\partial}{\partial x_6}\\
&& \qquad+\left(x_3x_2-2x_5\right)\frac{\partial}{\partial x_7}+\frac{x_2^3}{6}\frac{\partial}{\partial x_8}\,,			 \\
&& Y_2=\frac{\partial}{\partial x_2}\,, \qquad Y_3=\frac{\partial}{\partial x_3}+x_1\frac{\partial}{\partial x_4}+x_2\frac{\partial}{\partial x_5}+\frac{x_1^2}{2}\frac{\partial}{\partial x_6}+x_3\frac{\partial}{\partial x_7}+\frac{x_2^2}{2}
\frac{\partial}{\partial x_8}\,,	\\
&&Y_4=-2\frac{\partial}{\partial x_4}-2x_1\frac{\partial}{\partial x_6}-2x_2\frac{\partial}{\partial x_7}\,,\qquad Y_5=-\frac{\partial}{\partial x_5}-x_2\frac{\partial}{\partial x_8}\,, \qquad Y_6=3\frac{\partial}{\partial x_6}\,,\\
&&Y_7=2\frac{\partial}{\partial x_7}\,,\qquad Y_8=\frac{\partial}{\partial x_8}.
\end{eqnarray}
Previous vector fields take the same values as $X_1,\ldots,X_8$ at $0\in \mathbb{R}^8$ , namely $X_\alpha(0)=Y_\alpha(0)$ for $\alpha=1,\ldots,8$. Moreover, the non-vanishing commutation relations for $Y_1,\ldots,Y_8$ read
$$
\begin{gathered}
\,[Y_1,\,Y_2]=-Y_3\,,\qquad [Y_1,\,Y_3]=-Y_4\,,
\qquad[Y_1,\,Y_4]=-Y_6\,,\qquad[Y_1,\,Y_5]=-Y_7\,,\\
\,[Y_2,\,Y_3]=-Y_5\,,\qquad [Y_2,\,Y_4]=-Y_7\,,\qquad [Y_2,\,Y_5]=-Y_8\,.	
\end{gathered}\label{Mur_not_ster_1_alg_camp_vec_symmetires}
$$
Let us construct a six-contact distribution that is invariant relative to the action by Lie brackets of the vector fields $X_1,\ldots,X_8$. Note that any distribution spanned by $Y_1,\ldots,Y_8$ is invariant relative to the action of the vector fields $X_1,\ldots,X_8$. In particular, consider the distribution spanned by the vector fields $ Y_1,Y_2$. Since $[Y_1,Y_2]=-Y_3$ and $Y_1\wedge Y_2\wedge Y_3$ is never vanishing, the distribution is maximally non-integrable. 
Consider the basis of vector fields on $\mathbb{R}^8$ given by
$$
Y_1,Y_2,X_3,\ldots,X_8.
$$
On the one hand, $Y_1,Y_2$ span 
a maximally non-integrable distribution, while $
X_3,\ldots,X_8$ are Lie symmetries of $\langle Y_1, Y_2\rangle$ spanning an abelian Lie algebra and a distribution that is supplementary to the one spanned by $Y_1,Y_2$. This gives rise to a six-contact distribution on $\mathbb{R}^8$. Moreover, $X_1,\ldots,X_8$ leave invariant the distribution spanned by $Y_1,Y_2$, which turns them into six-contact Hamiltonian vector fields. 

The vector fields $X_3,\ldots,X_8$ are the Reeb vector fields of the six-contact form $\bm \eta_{3b}$ to be described next . Nevertheless, the $\bm\eta_{3b}$-Hamiltonian $k$-functions of $X_1,X_2$ will not be in general first integrals of the Reeb vector fields of $\bm\eta_{3b}$. In particular, the vector fields $X_3,\ldots,X_8$ do not commute with $X_1,X_2$, which implies that the $\bm\eta_{3b}$-Hamiltonian six-functions of $X_1,X_2$ are not first integrals of the Reeb distribution, which is spanned by $X_3,\ldots,X_8$.

It is worth noting that the potential applications of this method are quite large as the scheme used is quite general: it only depends on a subspace, which is not a Lie algebra, but is invariant relative to a supplementary commutative  Lie subalgebra. 

Then, the dual forms to $Y_1,Y_2,X_3,\ldots,X_8$ take the form
$$
\begin{gathered}
\eta^1=\d x_1\,,\qquad
\eta^2=\d x_2\,,\qquad	
 \eta^3=\d x_3-x_2\d x_1\,,\qquad\eta^4=\left(x_{1}x_{2}-\frac{1}{2}x_{3}\right)\d x_1 - \frac{1}{2}x_1 \d x_3- \frac{1}{2}\d x_{4} \,,\\
\eta^5=\frac {1}{2}x_2^2\d x_1 -\d x_5\,,\qquad
\eta^6=\frac{1}{6} \left(2x_{3}x_{1}-3 x_{2} x_{1}^2+x_{4}\right)\d x_1+\frac{1}6 x_1^2\d x_3+\frac{1}{6}x_1\d x_4+\frac{1}{3}\d x_6\,,\\
\eta^7=\left(x_5-\frac{1}{2}x_1x_2^{2}\right)\d x_1-\frac{1}{2}x_{3} \d x_{3}+x_{1}\d x_{5}+\frac{1}{2}\d x_7\,,\qquad
\eta^8=-\frac1{6}x_2^3\d x_{1} +\d x_8\,.
\end{gathered}
$$
It is immediate that $\eta^3,\ldots,\eta^8$  give rise to a six-contact form
$$
\bm \eta_{3b}=\sum_{\alpha=3}^8\eta^\alpha\otimes e_\alpha.
$$
In fact, $
\ker\bm\eta_{3b}=\langle Y_1,Y_2\rangle
$ and the $\bm \eta_{3b}$-Hamiltonian six-functions for $X_3,\ldots,X_8$ are easily given by 
$$
{\bm h}_{\alpha}=-e_\alpha\,,\qquad \alpha=3,\ldots,8\,,
$$
because $X_3,\ldots,X_8$ are the Reeb vector fields for $\bm \eta_{3b}$. Meanwhile, 
$$
{\bm h}_1=x_2e_3-(x_1x_2-x_3)e_4-\frac{1}{2}x_2^2e_5-\frac 12(x_1(x_3-x_1x_2)+x_4)e_6+(x_1x_2^2/2-x_5)e_7+\frac 16 x_{2}^{3}e_8,\qquad $$
$${\bm h}_2=-x_1e_3+
\frac 12 x_1^2e_4+x_3e_5-\frac{x_1^3}6e_6-
\frac 12(x_1x_3+x_4)e_7-x_5e_8.
$$
Note that these $\bm \eta_{3b}$-Hamiltonian six-functions are not first integrals of all the Reeb vector fields, e.g. $X_5\bm h_2=e_8$.  

To verify that all our results are correct in a new way, let us write $\d\bm\eta_{3b}$, which reads
$$
\d\bm\eta_{3b}=\d x_1\wedge \d x_2 \otimes \left(e_3-x_1e_4-x_2 e_5+\frac 12 x_1^2e_6+x_2x_1e_7+\frac 12x_2^2e_8\right),
$$
which means that $X_3,\ldots,X_8$ take values in $\ker \d\bm\eta_{3b}$ and are ``dual'' to $\bm \eta_{3b}$. Finally, $(\mathbb{R}^8,\bm \eta_{3b},{\bm h}_{3b}=b_1(t){\bm h}_1+b_2(t){\bm h}_2)$ is a six-contact Lie--Hamiltonian system for $X_{3b}$. Moreover,  $(\mathbb{R}^8,\bm \eta_{3b},X_{3b})$ is a six-contact Lie system.

\subsection{A new control system}
Another example of a Lie system, given in \cite[Section 7.2.3.2]{Ram_02}, is a control system in $\mathbb{R}^{8}$ given by
\begin{equation}
\begin{aligned}
\frac{\d x_1}{\d t} &= b_{1}(t)\,, \qquad \frac{\d x_2}{\d t} = b_{2}(t)\,, \qquad \frac{\d x_3}{\d t} = b_2(t)x_1-b_{1}(t)x_{2}\,, \\
\frac{\d x_4}{\d t} &= b_{2}(t)x_{1}^{2}\,, \qquad
\frac{\d x_{5}}{\d t} = b_{1}(t)x_{2}^{2}\,, \qquad \frac{\d x_{6}}{\d t} = b_2(t)x_1^3\,, \\
\frac{\d x_7}{\d t} &= b_{1}(t)x_2^{3}\,, \qquad \frac{\d x_8}{\d t} = b_1(t)x_{1}^{2}x_{2}+b_{2}(t)x_{1}x_{2}^{2}\,.
\end{aligned}
\label{eq:Brockett_2}
\end{equation}
where $b_{1}(t), b_{2}(t)$ are arbitrary $t$-dependent functions. The above system is related to a generalisation to third degree of a Brockett system \cite{BD_93}.

System \eqref{eq:Brockett_2} is associated with the $t$-dependent vector field $X_{g3b}=b_{1}(t)X_{1}+b_{2}(t)X_{2}$ on $\mathbb{R}^8$, with
\begin{align*}
    &X_{1}=\pd{x_{1}}-x_{2}\pd{x_{3}}+x_{2}^{2}\pd{x_{5}}+x_{2}^{3}\pd{x_{7}}+x_{1}^{2}x_{2}\pd{x_{8}}, \\
    &X_{2}=\pd{x_{2}}+x_{1}\pd{x_{3}}+x_{1}^{2}\pd{x_{4}}+x_{1}^{3}\pd{x_{6}}+x_{1}x_{2}^{2}\pd{x_{8}}.
\end{align*}
By considering the vector fields\footnote{Note that there exists a little mistake in the form of the vector field $X_5$ in \cite[pg. 185]{Ram_02}.}
$$\begin{gathered}
    X_{3}=[X_{1},X_{2}]=2\pd{x_{3}}+2x_{1}\pd{x_{4}}-2x_{2}\pd{x_{5}}+3x_{1}^{2}\pd{x_{6}}+\left(x_2^{2}-x_{1}^{2}\right)\pd{x_{8}},\\
    X_{4}=[X_{1},X_{3}]=2\pd{x_{4}}+6x_{1}\pd{x_{6}}-2x_{1}\pd{x_{8}},\qquad
    X_{5}=[X_{2}.X_{3}]=-2\pd{x_{5}}+2x_{2}\pd{x_{8}},\\
    X_{6}=[X_{1},X_{4}]=6\pd{x_{6}}-2\pd{x_{8}}\,, \qquad X_{7}=[X_2,X_{5}]=2\pd{x_{8}},
\end{gathered}
$$
we obtain a nilpotent Lie algebra $V_{g3b}=\langle X_1,\ldots,X_7\rangle$ whose structure defined by the non-vanishing commutation relations
\begin{gather} [X_{1},X_{2}]=X_{3}\,,\qquad [X_{1},X_{3}]=X_{4}\,, \qquad [X_{1},X_{4}]=X_{6}\,,\\ \left[X_{2},X_{3}\right]=X_{5}\,,\qquad [X_{2},X_{5}]=X_{7}.
\end{gather}
It should be noted that there exists another vector field, $X_{8}=\pd{x_{7}}$, which commutes with $X_{1},\ldots, X_{7}$ so that $\{X_{1},\ldots, X_{8}\}$ span the tangent space at each point of $\mathbb{R}^{8}$. This gives rise to a locally automorphic Lie system $(\mathbb{R}^8,X_{g3b},V_{g3b})$. Since every automorphic Lie systems is locally diffeomorphic to an automorphic Lie system and $V_{g3b}$ is mapped into the Lie algebra of right-invariant vector fields on a Lie group with Lie algebra isomorphic to $V_{g3b}$, there exists a Lie algebra of Lie symmetries of the vector fields of $V_{g3b}$ on $\mathbb{R}^8$  isomorphic to $V_{g3b}$, which are locally diffeomorphic to left-invariant vector fields on a Lie group $G$ with Lie algebra isomorphic to $V_{g3b}$ (see \cite{GLMV_19} for details). Indeed, the Lie symmetries of $X_{1},\ldots ,X_{8}$ are spanned by 
\begin{align*}
    &Y_{1}=\pd{x_{1}}+x_{2}\pd{x_{3}}+\left(x_{1}x_{2}+x_{3}\right)\pd{x_{4}}+3x_{4}\pd{x_{6}}+\left(x_{1}^{2}x_{2}+\frac{1}{3}x_{2}^{3}-x_{4}\right)\pd{x_{8}},\\
    &Y_{2}=\pd{x_{2}}-x_{1}\pd{x_{3}}+\left(x_{1}x_{2}-x_{3}\right)\pd{x_5}+\left(\frac{1}{3}x_{1}^{3}+x_{1}x_{2}^{2}-x_{5}\right)\pd{x_{8}}\,, \qquad Y_{3}=2\pd{x_{3}}\,,\\
    &Y_{4}=2\pd{x_{4}}\,, \qquad Y_{5}=-2\pd{x_{5}}\,,\qquad 
    Y_{6}=6\pd{x_{6}}-2\pd{x_{8}}\,, \qquad Y_{7}=2\pd{x_{8}}\,, \qquad Y_{8}=\pd{x_{7}}.
\end{align*}
Since $Y_{1},\ldots, Y_{8}$ are linearly independent,  there exist dual forms $\theta^{1},\ldots, \theta^{8}$. Therefore, we can consider the distribution $\mathcal{D}=\langle Y_{1}, Y_{2},Y_{3}\rangle$  as the kernel of five-contact form $\bm \theta=\sum_{i=4}^{8}\theta^{i} \otimes e_{i}$ and a vector bundle mapping in Definition \ref{def:max_non_integrable_mapping} is given by $\rho(v_{\alpha},v_{\beta})=\bm \theta([Z_{\alpha},Z_{\beta}])$ for $Z_\alpha, Z_\beta \in \mathcal{D}.$ As the degeneracy of $\rho$ is locally equivalent to existence of a non-zero vector field $Z \in \mathcal{D}$ such that $[Z,W] \in \mathcal{D}$ for all $W\in \D$, one can check that if $Z=c_{1}Y_{1}+c_{2}Y_{2}+c_{3}Y_{3}$, then 
$$
\D\ni [Z,Y_{1}]=[c_{1}Y_{1}+c_{2}Y_{2}+c_{3}Y_{3},Y_{1}]=c_{3}[Y_{3},Y_{1}]+c_2[Y_2,Y_1] \Rightarrow c_{3}=0, 
$$
$$
\D\ni [Z,Y_{3}]=[c_{1}Y_{1}+c_{2}Y_{2},Y_{3}]=c_{1}[Y_{1},Y_{3}]+c_{2}[Y_{2},Y_{3}] \Rightarrow c_{1}=c_{2}=0, 
$$ thus $\rho$ is non-degenerate and $\mathcal{D}$ is maximally non-integrable. Additionally, it admits five commuting Lie symmetries $X_{4},\ldots,X_{8}$, hence it is a five-contact distribution invariant with respect to $X_{1},\ldots,X_{8}$, which are $\bm\eta_{gb3}$-Hamiltonian with regard to five-contact form $\bm\eta_{gb3}$ to be described next.

Consider the dual forms to $Y_{1},\ldots,Y_{3},X_{4},\ldots,X_{8}$, namely
\begin{gather}
    \eta^{1}=\d x_{1}\,,\qquad
    \eta^{2}=\d x_{2}\,,\qquad
    \eta^{3}= -\frac{1}{2}x_{2}\d x_{1}+ \frac{1}{2}x_{1} \d x_{2}+\frac{1}{2}\d x_{3},\\
    \eta^{4}=-\frac{1}{2}(x_{1}x_{2}+x_{3})\d x_{1}+\frac{1}{2}\d x_{4},\qquad
    \eta^{5}=\frac{1}{2}\left(x_{1}x_{2}-x_{3}\right)\d x_{2}-\frac{1}{2}\d x_{5}, \\
    \eta^{6}= \frac{1}{2}\left(x_{1}\left(x_{1}x_{2}+x_{3}\right)-x_{4}\right)\d x_{1}-\frac{1}{2}x_{1}\d x_{4}+\frac{1}{6}\d x_{6}, \\
    \eta^{7}=-\frac{1}{6}x_{2}\left(3x_{1}^{2}+x_{2}^{2}\right)\d x_{1}+\frac{1}{6}\left(-x_{1}^{3}-6x_{1}x_{2}^{2}+3\left(x_{2}x_{3}+x_{5} \right)\right)\d x_{2}+\frac{1}{2}x_{2}\d x_{5}+ \frac{1}{6}\d x_{6}+\frac{1}{2}\d x_{8}, \\\eta^{8}=\frac 32(x_1x_2-x_3)\d x_2-\frac 32\d x_5+\d x_{7}.
\end{gather}
Dual forms associated with $X_{4},\ldots ,X_{8}$ give rise to a five-contact form $
\bm \eta_{g3b}=\sum_{\alpha=4}^{8}\eta^{\alpha} \otimes e_{\alpha},
$ 
and the $\bm\eta_{g3b}$-Hamiltonian functions of $X_{1},\ldots,X_{8}$ read
\begin{gather*}
\bm h_{1}=\frac{1}{2}(x_{1}x_{2}+x_{3})e_{4}+\frac{1}{2}x_{2}^{2}e_{5}+\frac{1}{2}(-x_{1}(x_{1}x_{2}+x_{3})+x_{4})e_6-\frac{1}{3}x_{2}^{3}e_7+\frac 12(3-2x_2)x_2^2e_{8}, \\
\bm h_{2}=-\frac{1}{2}x_{1}^{2}e_{4}+\frac{1}{2}(-x_{1}x_{2}+x_{3})e_5+\frac{1}{3}x_{1}^{3}e_{6}+\frac{1}{2}\left(x_{1}x_{2}^{2}-x_{2}x_{3}-x_{5}\right)e_{7}+\frac 32(-x_1x_2+x_3)e_8, \\
\bm h_{3}= -x_{1}e_{4}-x_{2}e_{5}+\frac{1}{2}x_{1}^{2}e_{6}+\frac{1}{2}x_{2}^{2}e_{7}\,, \qquad \bm h_{\alpha}=-e_{\alpha}, \qquad \alpha=4,\ldots,8.
\end{gather*}
By taking Lie brackets of $\bm h_{1}, \ldots, \bm h_{8}$, one obtain that their non-vanishing relations read
\begin{gather} \{\bm h_{1},\bm h_{2}\}=-\bm h_{3}\,,\qquad \{\bm h_{1},\bm h_{3}\}=-\bm h_{4}\,, \qquad \{\bm h_{1},\bm h_{4}\}=-\bm h_{6}\,,\\ \left\{\bm h_{2},\bm h_{3}\right\}=-\bm h_{5}\,,\qquad \{\bm h_{2},\bm h_{5}\}=-\bm h_{7}.
\end{gather}
Then, $X_{g3b}$ admits a five-contact Lie--Hamiltonian system $(\mathbb{R}^8,{\bm \eta}_{g3b},{\bm h}_{g3b}=b_1(t){\bm h}_1+b_2(t){\bm h}_2)$. In fact, $(\mathbb{R}^8,\bm \eta_{g3b},X_{g3b})$ is a five-contact Lie system.

\subsection{Front-wheel driven kinematic car}

Let us consider another example of control system, namely
\begin{equation}
\label{eq:wheel-car}
\tder{x}=c_{1}(t)\,, \qquad \tder{y}=c_{1}(t)\tan{\theta}\,, \qquad \tder{\phi}=c_{2}(t)\,, \qquad \tder{\theta}=c_{1}(t)\frac{\tan{\phi}}{\cos{\theta}}\,,
\end{equation}
defined on the configuration manifold $M=\R^{2} \times I^{2}$, where $I=\left(-\frac{\pi}{2}, \frac{\pi}{2}\right)$, with coordinates $\{x,y,\theta, \phi\}$, respectively, and   arbitrary $t$-dependent functions $c_1(t)$ and $c_2(t)$. System \eqref{eq:wheel-car} is of interest as describes a simple model of a car with front and rear wheels \cite{Ram_02}. Its associated $t$-dependent vector field is $X=c_{1}(t)Z_{1}+c_{2}(t)Z_{2}$, where
$$
Z_{1}=\pd{x}+ \tan{\theta}\pd{y}+\frac{\tan{\phi}}{\cos{\theta}}\pd{\phi}\,, \qquad Z_{2}=\pd{\phi}\,.
$$
By taking their commutators
$$
Z_{3}=[Z_{1}, Z_{2}]=-\frac{1}{\cos{\theta}\cos^{2}{\phi}}\pd{\theta}\,, \qquad Z_{4}=[Z_{1},Z_{3}]=\frac{1}{\cos^{3}{\theta}\cos^{2}{\phi}}\pd{y}\,,
$$
one sees that $Z_{1}, Z_{2}, Z_{3}, Z_{4}$ span $\T M$. These vector fields do not close any finite-dimensional Lie algebra, indeed, the successive Lie brackets of $Y_2$ with $Y_4$, namely $[Y_2,Y_4],[Y_2,[Y_2,Y_4]],\dots$, span an infinite-dimensional Lie algebra of vector fields. When $c_1(t)$ and $c_2(t)$ are not linearly dependent, one then has that \eqref{eq:wheel-car}
 is not a Lie system. 
 
 Nonetheless, \eqref{eq:wheel-car} can be transformed into a Lie system  related to a nilpotent Vessiot--Guldberg Lie algebra by using the following transformation (see \cite {CR_02} for details)
$$
c_{1}(t)=b_{1}(t)\,, \qquad c_{2}(t)=-3\sin^{2}{\phi}\frac{\tan{\theta}}{\cos{\theta}}b_{1}(t)+\cos^{3}{\theta}\cos^{2}{\phi}\,b_{2}(t)\,.
$$
Indeed, the successive change of coordinates
$$
x_{1}=x\,, \qquad x_{2}=\frac{\tan{\phi}}{\cos^{3}{\theta}}\,, \qquad x_{3}=\tan{\theta}\,, \qquad x_{4}=y\,,
$$
transforms \eqref{eq:wheel-car} into the control system on $\mathbb{R}^{4}$ given by
\begin{equation}
\label{eq:wheel_car_transformed}
\tder{x_{1}}=b_{1}(t)\,, \qquad \tder{x_{2}}=b_{2}(t)\,, \qquad \tder{x_{3}}=b_{1}(t)x_{2}\,, \qquad \tder{x_{4}}=b_{1}(t)x_{3}\,.
\end{equation}
 System \eqref{eq:wheel_car_transformed} describes the integral curves of the $t$-dependent vector field $X_{fw}=b_{1}(t)X_{1}+b_{2}(t)X_{2}$, with
$$
X_{1}=\pd{x_{1}}+x_{2}\pd{x_{3}}+x_{3}\pd{x_{4}}\,, \qquad X_{2}=\pd{x_{2}}\,.
$$
Along with the following vector fields
$$
X_{3}=[X_{1}, X_{2}]=-\pd{x_{3}}\,, \qquad X_{4}=[X_{1},X_{3}]=\pd{x_{4}}\,,
$$
the set $\{X_{1},X_{2},X_{3},X_{4}\}$ generates a nilpotent Lie algebra $V_{fw}$ with non-vanishing commuting relations
$$
[X_{1}, X_{2}]=X_{3}\,, \qquad [X_{1},X_{3}]=X_{4}\,.
$$
Because $\langle X_{1},X_{2},X_{3},X_{4}\rangle = \T\R^{4}$, one has that $(\mathbb{R}^4,X_{fw},V_{fw})$  is a locally automorphic Lie system. Therefore, there exist, at least locally, Lie symmetries of $X_{1},\ldots, X_{4}$ spanning Lie algebra isomorphic to $V_{fw}$ \cite{GLMV_19}, namely
$$
Y_{1}=\pd{x_{1}}\,, \qquad Y_{2}=\pd{x_{2}}+x_{1}\pd{x_{3}}+\frac{1}{2}x_{1}^{2}\pd{x_{4}}\,, \qquad Y_{3}=\pd{x_{3}}+x_{1}\pd{x_{4}}\,, \qquad Y_{4}=\pd{x_{4}}\,.
$$
Since $[Y_{1},Y_{2}]=Y_{3}$ and $Y_1\wedge Y_2\wedge Y_3$ is non-vanishing, the distribution $\D=\langle Y_{1},Y_{2}\rangle$ is maximally non-integrable. Moreover, $\D$ is a two-contact distribution, since it admits two commuting Lie symmetries $X_{3}$ and $X_{4}$ such that $Y_1,Y_2,X_3,X_4$ span $\T\mathbb{R}^4$. One can construct its associated two-contact form $\bm \eta_{fw}$ by considering dual forms to $Y_{1},Y_{2},X_{3},X_{4}$, namely
$$
\eta^{1}=\d x_{1}\,, \qquad \eta^{2}=\d x_{2}\,, \qquad \eta^{3}=x_{1}\d x_{2} -\d x_{3}\,, \qquad \eta^{4}=-\frac{1}{2}x_{1}^{2}\d x_{2}+\d x_{4}\,,
$$
are duals to $Y_1,Y_2,X_3,X_4$. Then,
$$
\bm \eta_{fw} = \eta^{3}\otimes e_{3} + \eta^{4} \otimes e_{4}
$$
is a two-contact form with $\ker \bm \eta_{fw}=\D$. Indeed, both $Y_{1}$ and $Y_{2}$ spans $\ker \bm \eta_{fw}$, $\dker_{fw}= \langle X_{3}, X_{4}\rangle$ and $\ker \bm \eta_{fw}\, \cap\, \dker_{fw}=0$. Because $X_{1},\ldots, X_{4}$ leave invariant $\D$, they are $\bm \eta_{fw}$-Hamiltonian vector fields with $\bm\eta_{fw}$-Hamiltonian two-functions
$$
\bm h_{1}=x_{2}  e_{3}-x_{3}  e_{4}\,,\qquad\bm h_{2}=-x_{1} e_{3} +\frac{1}{2}x_{1}^{2}  e_{4}\,, \qquad {\bm h}_{\alpha}=-e_{\alpha}\,, \qquad\alpha=3,4\,,
$$
which non-vanishing Lie brackets are
$$
\{\bm h_{1},\bm h_{2}\}=-\bm h_{3}\,, \qquad\{\bm h_{1}, \bm h_{3}\}=-\bm h_{4}\,.
$$
Note that $(\mathbb{R}^4,\bm \eta_{fw},X_{fw})$ is a four-contact Lie system and $(\mathbb{R}^4,\bm\eta_{fw},\bm h_{fw}=b_1(t){\bm h}_1+b_2(t){\bm h}_2)$ is an associated $\bm\eta_{fw}$-Lie Hamiltonian system. 

Using Remark \ref{Rem:SpecialCases}, one has that 
$$
\frac{\d \langle {\bm h}_1,e^3\rangle}{\d t}=-X_{\bm h}\langle {\bm h}_1,e^3\rangle=-\langle\{b_1(t){\bm h}_1+b_2(t){\bm h}_2,{\bm h}_1\},e^3\rangle-R_{{\bm h}_1}\langle {\bm h}_{fw},e^3\rangle=b_2(t)
$$and $\int^tb_2(t')\d t'-\langle {\bm h}_1,e^3\rangle$ is a $t$-dependent constant of motion for \eqref{eq:wheel_car_transformed}. 

\section{PDE Lie systems with a compatible {\it k}-contact manifold}
\label{sec:PDE_Lie_systems}

Let us briefly introduce PDE Lie systems \cite{CL_11,Ram_02} and show that a Lie algebra of $\bm \eta$-Hamiltonian vector fields relative to a co-oriented $k$-contact manifold $(M,\bm\eta)$ allows us to construct PDE Lie systems with a compatible $k$-contact manifold, the so-called {\it $k$-contact PDE Lie systems}. These PDE Lie systems can be understood as a certain type of Hamilton--De Donder--Weyl equations in the $k$-contact realm 
\cite{Riv_21}. Moreover, methods developed for $k$-contact Lie systems can be easily generalised to $k$-contact PDE Lie systems. 

    A {\it $t$-dependent $k$-vector field} on  $M$ with $t\in \mathbb{R}^k$ is a mapping $\bm X\colon \R^k \times M \to \bigoplus^k \T M$ such that $\tau^k\colon \bigoplus^k \T M \to M$ satisfies $\tau^k \circ \bm X=\pi$, where $\pi\colon \R^k\times M \to M$ is the natural projection onto $M$. Every $t$-dependent $k$-vector fields on $M$ with $t\in \mathbb{R}^k$ amounts to a series of $t$-parametrised $k$-vector fields ${\bm X}_t:M\rightarrow \bigoplus^k\T M$, which in turn implies that $\bm X$ can be considered as a family $\bm X=(X_1,\ldots,X_k)$ of $t$-dependent vector fields on $M$ with $(X_\alpha)_t=\tau^\alpha\circ \bm X_t$ for $\alpha=1,\ldots,k$ and every $t\in \mathbb{R}^k$.  

Given a  $t$-dependent $k$-vector field ${\bf X}=(X_1,\ldots,X_k)$ on $M$, its \textit{associated system} is the system of first-order partial differential equations
\begin{equation}\label{Eq:SystemPDEs}
    \frac{\partial \gamma}{\partial t^\alpha } = X_\alpha (t,\gamma)\,, \qquad \alpha=1,\ldots, k\,,\qquad \gamma\in  M\,,\qquad t=(t^1,\ldots,t^k) \in \R^k\,.
\end{equation}
Each particular solution $\gamma : \mathbb{R}^k \rightarrow M$ with $\gamma(0)=x_0$ is called an  \textit{integral section} of ${\bf X}$ with initial condition $x_0$ at $t=0$. Every $t$-dependent $k$-vector field has an associated $t$-dependent system of partial differential equations in normal form, and every $t$-dependent system of first-order partial differential equations in normal form uniquely determines the integral sections of some $t$-dependent $k$-vector field. We can then use ${\bf X}$ to refer both to a $t$-dependent $k$-vector field and the $t$-dependent system of partial differential equations determining its integral sections. It is worth stressing that not every $t$-dependent $k$-vector field ${\bm X}$ is such that its associated system \eqref{Eq:SystemPDEs} is integrable.

As for ordinary differential equations, an integrable system of first-order partial differential equations on $M$ of the form \eqref{Eq:SystemPDEs} is said to admit a \textit{superposition rule} \cite{CGM_07} if there exists a $t$-independent map $\Phi \colon M^{\ell} \times M \rightarrow M$ of the form
\begin{equation}
    x=\Phi(x_{(1)},\ldots,x_{(\ell)};\lambda)\,,
\end{equation}
such that every generic solution of \eqref{Eq:SystemPDEs} can be written as
\begin{equation}\label{result}
     x(t)=\Phi(x_{(1)}(t),\ldots,x_{(\ell)}(t);\lambda)\,,
\end{equation}
where $x_{(1)}(t),\ldots,x_{(\ell)}(t)$ is any generic family of particular solutions of system  \eqref{eq:SysPDEs} and $\lambda$ is an element of $M$.

\begin{theorem}[PDE Lie Theorem]\label{Lie_theorem_PDEs}
A $t$-dependent system of PDEs ${\bm X}$, with $t\in \mathbb{R}^k$, admits a superposition rule if, and only if, ${\bf X}=(X_1,\ldots,X_k)$ is integrable and it can be cast into the form 
\begin{equation}
    X_\alpha (t,x)=\sum^{r}_{\mu=1}b^\mu_{\alpha}(t)Y_{\mu}(x)\,, \qquad \alpha=1,\ldots,k\,,\label{kvec2}
\end{equation}
where $Y_{1}, \ldots, Y_{r}$ is a family of vector fields on $M$ spanning an $r$-dimensional Lie algebra of vector fields $V$ on $M$ and $b^\mu_{\alpha}(t)$, with $\alpha=1,\ldots,k$ and $\mu=1,\ldots,r$,  are arbitrary $t$-dependent functions. 
\end{theorem}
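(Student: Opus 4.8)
The strategy is to reduce the PDE Lie Theorem to the classical (ODE) Lie Theorem, Theorem \ref{Lie_theorem}, applied separately to each component vector field, while carefully tracking the integrability condition. The statement is an ``if and only if'', so I would argue in two directions.

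For the forward implication, assume $\bm X = (X_1,\dots,X_k)$ admits a superposition rule $\Phi$. By the definition of a superposition rule for PDEs, every generic integral section of $\bm X$ can be written as $x(t) = \Phi(x_{(1)}(t),\dots,x_{(\ell)}(t);\lambda)$; in particular $\bm X$ is integrable (this is built into the hypothesis of admitting a superposition rule, since only integrable systems are considered). Now fix any $\alpha \in \{1,\dots,k\}$ and any base point $t_0 \in \mathbb{R}^k$, and restrict attention to curves $s \mapsto t_0 + s e_\alpha$. Along such a line, an integral section of $\bm X$ restricts to an integral curve of the $t$-dependent vector field $s \mapsto (X_\alpha)_{t_0+s e_\alpha}$ on $M$, and the map $\Phi$ (with $x_{(i)}$ replaced by the corresponding restricted solutions) yields a superposition rule for this ODE. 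By the ODE Lie Theorem, $(X_\alpha)_t$ lies, for all $t$, in a finite-dimensional Lie algebra of vector fields. Taking $V$ to be the smallest Lie algebra containing all the vector fields $\{(X_\alpha)_t : t \in \mathbb{R}^k,\ \alpha=1,\dots,k\}$ — i.e. the joint analogue of $V^X$ — one must check $V$ is finite-dimensional. This is the one genuinely delicate point: a priori one only controls each $X_\alpha$ separately. The cleanest route is to invoke the standard fact (Lie's original argument, recalled after Theorem \ref{Lie_theorem}, see also \cite{CGM_07,CL_11}) that the existence of a superposition rule with $\ell$ particular solutions for the full system $\bm X$ forces all the vector fields $\{(X_\alpha)_t\}$ to be linearly dependent, modulo a common finite-dimensional space, by the same rank bound $r \le \ell \cdot \dim M$; concretely, the diagonal prolongation ${\bm X}^{[\ell+1]}$ must be tangent to the foliation defined by $\Phi$, and this tangency condition, being linear in the $X_\alpha$'s with coefficients depending only on the prolonged vector fields, confines all of them to a common finite-dimensional $V$. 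Choosing a basis $Y_1,\dots,Y_r$ of $V$ gives the decomposition \eqref{kvec2}.

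For the converse, assume $\bm X$ is integrable and has the form \eqref{kvec2} with $Y_1,\dots,Y_r$ spanning an $r$-dimensional Lie algebra $V$. By Proposition \ref{proposition}, choose $\ell$ with $\ell \dim M \ge r$ such that the diagonal prolongations $Y_1^{[\ell+1]},\dots,Y_r^{[\ell+1]}$ are linearly independent at a generic point of $M^{\ell+1}$. The prolonged $k$-vector field ${\bm X}^{[\ell+1]} = (X_1^{[\ell+1]},\dots,X_k^{[\ell+1]})$ is still integrable, since prolongation is a Lie-algebra morphism and preserves vanishing of commutators, and each $X_\alpha^{[\ell+1]} = \sum_\mu b^\mu_\alpha(t) Y_\mu^{[\ell+1]}$ still takes values in $V^{[\ell+1]} := \langle Y_1^{[\ell+1]},\dots,Y_r^{[\ell+1]}\rangle$. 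Because these prolongations are linearly independent at a generic point, the distribution they span, together with the ``vertical'' directions coming from $V$ on the $(\ell+1)$-st copy, is integrable (it is the distribution $\langle {\bm X}^{[\ell+1]}\rangle$ plus its infinitesimal deformations, closed under the Lie bracket by the Lie-algebra property of $V$). One then applies the standard construction: a complete set of first integrals of the foliation tangent to ${\bm X}^{[\ell+1]}$ on $M^{\ell+1}$ that are independent of the last factor $M$ yields, by the implicit function theorem, a map $\Phi: M^\ell \times M \to M$ which is the sought superposition rule; the verification that \eqref{result} holds is exactly as in the ODE case \cite[pp.\,18--21]{CL_11}. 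The integrability of $\bm X$ is what guarantees that solutions $\gamma:\mathbb{R}^k\to M$ exist through generic points and that the foliation picture on $M^{\ell+1}$ is consistent.

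**Main obstacle.** The hard part is the finite-dimensionality of the joint Lie algebra $V$ in the forward direction: controlling each $X_\alpha$ individually via the ODE Lie Theorem gives $k$ a priori different finite-dimensional Lie algebras, and one must show they sit inside a common one. I expect to handle this by the prolongation/tangency argument sketched above — the superposition rule for $\bm X$ forces ${\bm X}^{[\ell+1]}$ to be tangent to a fixed foliation on $M^{\ell+1}$, and since this holds identically in $t$, the coefficient-extraction yields that every $Y$ appearing in any $X_\alpha$ lies in a single finite-dimensional space — together with the integrability hypothesis, which is now an explicit part of the statement (unlike in the ODE case) and is needed both to make sense of ``generic solution'' and to run the converse construction. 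The remaining steps are routine adaptations of the classical proofs in \cite{CGM_07,CL_11}.
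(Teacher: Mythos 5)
The paper does not actually prove Theorem \ref{Lie_theorem_PDEs}: it is recalled without proof from the literature on PDE Lie systems (\cite{CL_11,Ram_02,CGM_07}), so there is no in-paper argument to compare against. Your sketch follows the standard route of those references --- the superposition rule defines a horizontal foliation on $M^{\ell+1}$ to which all diagonal prolongations $(X_\alpha)_t^{[\ell+1]}$ are tangent, the vector fields with this tangency property form a single finite-dimensional Lie algebra (which resolves your ``delicate point''; the detour through restricting to lines in $\mathbb{R}^k$ and invoking the ODE Lie Theorem componentwise is then redundant), and conversely the involutive distribution spanned by $Y_1^{[\ell+1]},\ldots,Y_r^{[\ell+1]}$ yields first integrals from which $\Phi$ is obtained by the implicit function theorem, with integrability of $\bm X$ guaranteeing that integral sections exist and that constancy of the first integrals holds in all $t^\alpha$-directions simultaneously. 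The only point to tighten is the phrase ``first integrals \ldots independent of the last factor $M$'': what is needed is the opposite, namely that the chosen first integrals have nondegenerate Jacobian with respect to the coordinates of the distinguished copy of $M$, so that one can solve for $x_{(0)}$; with that correction the proposal is essentially the standard proof.
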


As in the case of Lie systems, the Lie algebra spanned by $Y_1,\ldots,Y_r$ is called a Vessiot--Guldberg Lie algebra of the PDE Lie system.

Let us explain how a PDE Lie system on a Lie group $G$ allows us to solve a PDE Lie system on a manifold $M$ whose Vessiot--Guldberg Lie algebra is given by the fundamental vector fields of a Lie group action $\Phi\colon G\times M\rightarrow M$ \cite{Ram_02}.

\begin{theorem} Given a PDE Lie system on a Lie group $G$ given by

\begin{equation}\label{eq:Initial}
\frac{\partial g}{\partial t^\mu}=\sum_{\alpha=1}^rb_{\mu}^\alpha(t,g)X_\alpha^R(g)\,,\qquad g\in G,\qquad \mu=1,\ldots,k,
\end{equation}
and a Lie group action $\Phi\colon G\times M\rightarrow M$, the solution of the PDE Lie system on $M$ given by
$$
\frac{\partial y}{\partial t^\mu}=\sum_{\alpha=1}^rb_{\mu}^\alpha(t,y)X_\alpha(y)\,,\qquad y\in M,\qquad \mu=1,\ldots,k,
$$
where $X_\alpha(y)=\Phi_{y*}X_\alpha^R(e)$ for the right-invariant vector fields $X_{\alpha}^{R} \in \mathfrak{g} \cong \T_{e}G$ with $\alpha=1,\ldots,r$, is
$$
y(t)=\Phi(g(t),y(0))
$$
for any $y(0)\in M$ and assuming that $g(t)$ is the particular solution to \eqref{eq:Initial} with $g(0)=e$.

\end{theorem}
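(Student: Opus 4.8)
The plan is to verify directly that $y(t) := \Phi(g(t), y(0))$ is the integral section of the given PDE Lie system on $M$ with initial condition $y(0)$, invoking uniqueness of solutions to integrable first-order PDE systems in normal form (guaranteed by the PDE Lie Theorem hypotheses). First I would record the defining property of fundamental vector fields of the action $\Phi$: for a right-invariant vector field $X^R_\alpha$ on $G$ one has, for every $g\in G$,
\begin{equation}
\T_g(\Phi^g)\big(X^R_\alpha(g)\big) = X_\alpha\big(\Phi(g,\cdot)\text{-image}\big),
\end{equation}
where $\Phi^g\colon M\to M$ denotes $y\mapsto \Phi(g,y)$; more precisely, since $X_\alpha(y)=\Phi_{y*}X^R_\alpha(e)$ with $\Phi_y\colon G\to M$, $\Phi_y(h)=\Phi(h,y)$, and $R_g$ denotes right translation on $G$, one uses $\Phi_y\circ R_g = \Phi^g\circ \Phi_y$ together with right-invariance $X^R_\alpha(g)=\T_e(R_g)X^R_\alpha(e)$ to get the transport identity $\T_g(\Phi^g)\big(X^R_\alpha(g)\big)=X_\alpha(\Phi^g(\text{pt}))$ at the relevant base points. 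This is the one genuinely structural lemma; it is the standard compatibility between right-invariant vector fields on $G$ and fundamental vector fields of a left action, and I expect this bookkeeping with the various partial maps to be the main obstacle — not because it is deep, but because keeping the $G$-argument versus $M$-argument straight (and the left/right conventions consistent with the right-invariant $X^R_\alpha$ appearing in \eqref{eq:Initial}) requires care.

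Next I would differentiate $y(t)=\Phi(g(t),y(0))$ with respect to $t^\mu$ using the chain rule. Writing $\Phi(g,y_0)=\Phi^{g}(y_0)$ as a function of $g$ for fixed $y_0=y(0)$, one has
\begin{equation}
\frac{\partial y}{\partial t^\mu}(t) = \T_{g(t)}\big(\Phi^{\,\cdot}(y_0)\big)\left(\frac{\partial g}{\partial t^\mu}(t)\right) = \T_{g(t)}\big(\Phi^{\,\cdot}(y_0)\big)\left(\sum_{\alpha=1}^r b_\mu^\alpha(t,g(t))\,X^R_\alpha(g(t))\right),
\end{equation}
where I substituted the equation \eqref{eq:Initial} satisfied by $g(t)$. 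By linearity and the transport identity from the previous paragraph, the right-hand side equals
\begin{equation}
\sum_{\alpha=1}^r b_\mu^\alpha(t,g(t))\, X_\alpha\big(\Phi(g(t),y_0)\big) = \sum_{\alpha=1}^r b_\mu^\alpha(t,g(t))\, X_\alpha(y(t)).
\end{equation}
It remains only to observe that the coefficient functions match: the PDE on $M$ has coefficients $b_\mu^\alpha(t,y)$ and we need $b_\mu^\alpha(t,g(t))$ evaluated along the curve to coincide with $b_\mu^\alpha(t,y(t))$ — here one reads the statement as taking the same $t$-dependent (and possibly state-dependent) coefficients pulled back consistently, i.e. the system on $M$ is by construction the $\Phi$-related system, so this is an identification rather than a computation. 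Hence $y(t)$ solves the PDE Lie system on $M$.

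Finally I would close the argument by checking the initial condition $y(0)=\Phi(g(0),y(0))=\Phi(e,y(0))=y(0)$, which holds since $\Phi$ is an action, and then invoke uniqueness: the hypotheses of Theorem~\ref{Lie_theorem_PDEs} guarantee the system \eqref{Eq:SystemPDEs} is integrable, and an integrable first-order PDE system in normal form has at most one integral section through a given point (this is the same Frobenius-type uniqueness used in Section~\ref{Sec:k-vectorfields}). Therefore $y(t)=\Phi(g(t),y(0))$ is \emph{the} solution with the prescribed initial datum, which is exactly the assertion. I would remark in passing that integrability of the system on $G$ and of the $\Phi$-related system on $M$ are equivalent because $\Phi^g$-relatedness is preserved under Lie brackets, so no separate integrability check on $M$ is needed.
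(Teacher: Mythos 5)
The paper itself states this theorem without proof (it is attributed to \cite{Ram_02}), so I am comparing your attempt with the standard argument; your overall strategy — differentiate $y(t)=\Phi(g(t),y(0))$ via the chain rule, use relatedness of the right-invariant fields on $G$ with the fundamental fields on $M$, check the initial condition, and invoke uniqueness of integral sections of an integrable system — is the right one. However, the justification of your single structural lemma is wrong as written, and it fails exactly at the left/right bookkeeping you yourself flagged as the delicate point. First, $\T_g(\Phi^g)$ does not typecheck: $\Phi^g\colon M\to M$ cannot be differentiated at a point $g\in G$. More seriously, the intertwining relation you invoke, $\Phi_y\circ R_g=\Phi^g\circ\Phi_y$, is false for a left action: $(\Phi_y\circ R_g)(h)=\Phi(hg,y)$ while $(\Phi^g\circ\Phi_y)(h)=\Phi(gh,y)$, so the identity holds with left translation $L_g$, not with $R_g$. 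If one follows your stated relation literally, the chain rule yields $\T_g\Phi_{y_0}\bigl(X^R_\alpha(g)\bigr)=\T_{y_0}\Phi^g\bigl(X_\alpha(y_0)\bigr)$, i.e.\ the pushforward of the fundamental vector field by the diffeomorphism $\Phi^g$, which is in general the fundamental vector field of the adjoint-transformed generator ($X_{\mathrm{Ad}_g v_\alpha}$ up to convention), not $X_\alpha$ — so the computation would not close. The correct relation is $\Phi_{y}\circ R_g=\Phi_{\Phi(g,y)}$, from which one gets immediately $\T_g\Phi_{y_0}\bigl(X^R_\alpha(g)\bigr)=\T_e\Phi_{\Phi(g,y_0)}\bigl(X^R_\alpha(e)\bigr)=X_\alpha\bigl(\Phi(g,y_0)\bigr)$, i.e.\ $X^R_\alpha$ and $X_\alpha$ are $\Phi_{y_0}$-related for each fixed $y_0$. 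With this repair the rest of your argument (substitution of the equation for $g(t)$, the initial condition $\Phi(e,y(0))=y(0)$, and uniqueness for the integrable system) goes through verbatim.

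One further point that you settled by fiat deserves a sentence rather than a parenthesis: with genuinely state-dependent coefficients $b_\mu^\alpha(t,g)$ on $G$ and $b_\mu^\alpha(t,y)$ on $M$, the required matching $b_\mu^\alpha(t,g(t))=b_\mu^\alpha(t,y(t))$ is not automatic, since $g(t)$ and $y(t)$ live in different spaces, and the statement would fail for arbitrary such coefficients. Consistently with the paper's PDE Lie Theorem, the coefficients should be read as functions of $t$ alone (the appearance of the state in the theorem's notation being a slip), which is essentially the reading you adopt; making that explicit, rather than calling it an identification, would close the argument cleanly.
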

Let us provide several known and new examples of PDE Lie systems. 

\begin{example}  Let us consider an example coming from a reduction of Wess--Zumino--Witten--Novikov (WZWN) equations \cite{FGRSZ_99,CGL_19}. Let us define a system of PDEs induced by a Lie algebra $\mathfrak{g}$  of the form
\begin{equation}\label{Eq:WZNWSystem}
\frac{\partial \psi}{\partial t^i}=\sum_{\beta=1}^{r}f^\beta_i(t)L_\psi M_\beta\,,\qquad \frac{\partial \psi}{\partial t^{-i}}=\sum_{\beta=1}^{r}f_{-i}^\beta(t) R_\psi M_\beta\,,\qquad \psi\in G\,,\qquad i=1,\ldots,n\,,
\end{equation}
where $\{M_1,\ldots,M_r\}$ is a basis of $\mathfrak{g}$, while $i=1,\ldots,n$,  $t=(t^{-n},\ldots,t^{-1},t^1,\ldots,t^n)\in \mathbb{R}^{2n}$ and $f^\beta_{-n},\ldots,f^\beta_{-1},f^\beta_1,\ldots,f^\beta_n \in \Cinfty(\R^{2n})$ are arbitrary functions for every $\beta=1,\ldots,r$. Recall that $L_\psi$ and $R_\psi$ are the left- and right- multiplications in $G$ by $\psi\in G$. Let us choose bases $\{Y^L_{1},\ldots,Y_{r}^{L}\}$ and $\{Y^R_{1},\ldots, Y_{r}^R\}$ of left- and right-invariant vector fields on $G$, respectively, so that system \eqref{Eq:WZNWSystem} can be then rewritten as
$$
\parder{\psi}{t^i}=\sum_{\beta=1}^rf_{i}^{\beta}(t)Y^L_\beta(\psi)\,, \qquad \parder{\psi}{t^{-i}}=\sum_{\beta=1}^rf_{-i}^{\beta}(t)Y^{R}_\beta(\psi)\,, \qquad \psi\in G\,, \qquad i=1,\ldots, n\,,
$$
which is the associated system of the $t$-dependent $2n$-vector field ${\bf X}=(X_{-n},\ldots,X_{-1},X_{1},\ldots,X_{n})$ of the form
$$
X_{-i}=\sum_{\beta=1}^{r}f_{-i}^{\beta}(t)Y^L_{\beta}\,, \qquad X_{i}=\sum_{\beta=1}^{r}f_i^{\beta}(t)Y^R_\beta\,, \qquad i=1,\ldots,n\,.
$$
The vector fields $Y_{1}^L, \ldots, Y_r^L,Y_1^R,\ldots,Y_r^R$ span a finite-dimensional Lie algebra, as every left-invariant vector field commutes with every right-invariant vector field, and together span a Lie algebra isomorphic to $\mathfrak{g}\oplus \mathfrak{g}$, provided  there is no vector field that is left- and right-invariant  simultaneously.

The integrability condition for \eqref{Eq:WZNWSystem} read
$$
\left[\frac{\partial}{\partial t^i}+X_i\ ,\ \frac{\partial}{\partial t^j}+X_j\right]=0\,,\qquad i,j=-n,\ldots,-1,1,\ldots,n\,.
$$
One can see that if $f_j^\beta=f_j^\beta(t^1,\ldots,t^n)$, $f_{-j}^\beta=f_{-j}^\beta(t^{-1},\ldots,t^{-n})$, for $j=1,\ldots,n$ and $\beta=1,\ldots,r$, and systems of PDEs given by the left- and right-hand sides in \eqref{Eq:WZNWSystem} are integrable, then our WZNW system is a PDE Lie system. 

\end{example}

\begin{example}
Let us analyse a system of PDEs associated with Floquet theory and geometric phases for periodic Lie systems \cite{FLV10}, which is here shown to be a PDE Lie system for the first time. This allows one to extend Floquet theory, which is applied to $t$-dependent periodic linear systems, to general nonlinear $t$-dependent periodic Lie  systems. More in detail, the co-adjoint action of the Lie group $G$ on the dual $\mathfrak{g}^*$ to its Lie algebra $\mathfrak{g}$ induces a system of PDEs  of the form
\begin{equation}
\label{eq:PDE-coadjoin-action}
\parder{\theta}{t}=\mathrm{ad}^*_{p_1(t,s)}\theta\,, \qquad \parder{\theta}{s}=\mathrm{ad}^*_{p_2(t,s)}\theta\,, \qquad  \theta \in \mathfrak{g}^*\,,\qquad (t,s)\in \mathbb{R}^2\,,
\end{equation}
for certain functions $p_1,p_2\colon \mathbb{R}^2\rightarrow \mathfrak{g}$. In particular, additional assumptions on the periodicity of $p_1, p_2$ can be assumed in Floquet theory. Anyway, we will skip hereafter these conditions, which could be implemented if desired. 
Let us choose a basis $\{\xi_{1},\ldots,\xi_{r}\}$ of $\mathfrak{g}$, which induces a family of fundamental vector fields of the coadjoint action $X_{\xi_{\alpha}}^{\mathfrak{g}^*}(\theta)=-\mathrm{ad}^*_{\xi_\alpha}\theta$ for $\alpha=1,\ldots,r$ and $\theta \in \mathfrak{g}^*$. Then, \eqref{eq:PDE-coadjoin-action} reads
$$
\parder{\theta}{t}=-\sum_{\alpha=1}^r b_1^{\alpha}(t,s)X^{\mathfrak{g}^*}_{\xi_\alpha}(\theta)\,, \qquad \parder{\theta}{s}=-\sum_{\alpha=1}^r b_2^{\alpha}(t,s)X^{\mathfrak{g}^*}_{\xi_\alpha}(\theta)\,, \qquad \theta \in \mathfrak{g}^*\,.
$$
The solutions of \eqref{eq:PDE-coadjoin-action} are the integral sections of the $(t,s)$-dependent two-vector field on $\mathfrak{g}^*$ given by
$$\bm X=\left(-\sum_{\alpha=1}^r b_1^{\alpha}(t,s)X^{\mathfrak{g}^*}_{\xi_\alpha}(\theta),  -\sum_{\alpha=1}^r b_2^{\alpha}(t,s)X^{\mathfrak{g}^*}_{\xi_\alpha}(\theta)\right),$$
which makes \eqref{eq:PDE-coadjoin-action} a PDE Lie system provided the integrability condition
$$
\left[\frac{\partial}{\partial t}-\sum_{\alpha=1}^r b_1^{\alpha}(t,s)X^{\mathfrak{g}^*}_{\xi_\alpha}(\theta),\frac{\partial}{\partial s}-\sum_{\alpha=1}^r b_2^{\alpha}(t,s)X^{\mathfrak{g}^*}_{\xi_\alpha}(\theta)\right]=0
$$
is satisfied.
\end{example}

\begin{example}
Let us consider a so-called {\it $\mathfrak{g}$-structure}, as defined in \cite{DK_16}, which are related to completely integrable distributions on  $M$ with simply transitive symmetry algebras and Cartan connections.
A $\mathfrak{g}$-structure is a differential one-form, $\bm\Upsilon$, on an $m$-dimensional manifold $M$ taking values in a Lie algebra $\mathfrak{g}$ so that
\begin{equation}\label{eq:gstructure}
\d \bm\Upsilon(X_1,X_2)=-\big[\bm\Upsilon(X_1),\bm\Upsilon(X_2)\big]=-\frac 12[\bm\Upsilon,\bm\Upsilon](X_1,X_2)
\end{equation}
for all vector fields $X_1,X_2$ on $M$. In particular, Maurer--Cartan forms are $\mathfrak{g}$-structures \cite{KNVol1_96}. Assume that $\mathfrak{g}$ is the Lie algebra of the Lie group $G$, and $G$ acts effectively on an $n$-dimensional manifold $N$. A {\it first integral} of $\bm\Upsilon$ is a function $f\colon M\rightarrow G$ such that $R\circ \T f=\bm\Upsilon$, where $R\colon v_g\in \T G\mapsto R_{g^{-1}*}v_g\in \mathfrak{g}$. By fixing a point $b \in N$ and introducing a Lie algebra homomorphism $\alpha\colon \mathfrak{g} \rightarrow \mathfrak{X}(N)$, we can find a local integral $f \colon M \rightarrow G$ of $\bm\Upsilon$ by means of the mapping $F \colon q\in M \mapsto f(q)b\in N$ , with $b\in N$ and $f(q)b$ the action of $f(q)$ on $b$ , satisfying 
\begin{equation}
\label{eq:g_structure}
(F_q)_*=\alpha_{F(q)} \circ  \Upsilon_q\,,
\end{equation}
where $\alpha_q \colon \mathfrak{g} \rightarrow \T_qN$ is given by $\alpha_q(X)=\alpha(X)_q$. In local coordinates, \eqref{eq:g_structure} reads
\begin{equation}\label{Eq:PDELieSystemGstructure}
\parder{F(q)}{q^i}=\sum_{\beta=1}^{\dim \mathfrak{g}}\alpha_\beta(F(q))\bm\Upsilon^\beta _i(q)\,, \qquad i=1,\ldots,m\,, 
\end{equation}
for $ M\ni q=(q^1,\ldots,q^m)$. It follows that \eqref{eq:g_structure} is a system of PDEs associated with the $M$-dependent $m$-vector field $\bm X =(X_{1}, \ldots, X_m)$ on $N$ of the form
$$
X_{i}=\sum_{j=1}^{\dim N}\sum_{\beta=1}^{\dim\mathfrak{g}}\alpha^j
_\beta(y)\Upsilon^\beta_i(q)\frac{\partial}{\partial y^j}\,, \qquad i=1,\ldots, m\,,\qquad y\in N.$$
Moreover, one can verify that $\bm X$ is integrable (see Proposition \ref{Lem:XInt} in the Appendix). 
This makes \eqref{eq:g_structure} into a PDE Lie system, which admits a superposition rule described in  \cite{DK_16}.

It is worth noting that all locally automorphic Lie systems admit a related $\mathfrak{g}$-structure.  Above example shows how the concept given in \cite{DK_16} is really important in the theory of Lie systems and their generalisations, which seems to passed unadvertised so far.
\end{example}

\begin{example}
The next example of PDE Lie system that we are going to study is related to Lax pairs for systems of PDEs \cite{FG_96}. In this case, a system of PDEs is written as the compatibility condition for the integrability of a system of PDEs of the form
\begin{equation}
\label{eq:Lax_pair}
\frac{\partial g}{\partial t}=A(t,s)g\,,\qquad \frac{\partial g}{\partial s}=B(t,s)g\,,\qquad g\in G\,,
\end{equation}
where $A(t,x)$ and $B(t,s)$ are functions taking values in the matrix algebra $\mathfrak{g}$ of a matrix group $G$. Moreover, the integrability condition for \eqref{eq:Lax_pair}, also called the {\it zero curvature condition}, can be written as
\begin{equation}\label{eq:IntPDE}
\frac{\partial A}{\partial s}-\frac{\partial B}{\partial t}+[A,B]=0\,.
\end{equation}
The coefficients of the matrix functions $A,B$ depend on a function $u\colon\mathbb{R}^2\rightarrow M$ that is a solution of the initial system of PDEs if, and only if, (\ref{eq:Lax_pair}) is integrable. In other words, the above system of PDEs \eqref{eq:IntPDE}, when considered as a system of PDEs on the variable $u$, retrieves the system of PDEs under study. Meanwhile, (\ref{eq:Lax_pair})  can be used to study the properties of \eqref{eq:IntPDE}, e.g. constants of motion, B\"acklund transformations, and other properties \cite{LG_18}. In particular, \eqref{eq:Lax_pair} can be understood as a linear spectral problem for studying immersed submanifolds related to integrable systems \cite{LG_18}. 

The adjoint Lie group action of $G$ on $\mathfrak{g}$ relate \eqref{eq:Lax_pair} with systems of PDEs of the form
$$
\frac{\partial X}{ \partial s}=[A,X]\,,\qquad \frac{\partial X}{ \partial t}=[B,X]\,, 
$$
where $X$ takes values in the matrix Lie algebra of $G$. In general, any system of the form
$$
\frac{\partial F}{\partial t}=\sum_{\alpha=1}^rA^\alpha_1(t,s)X_\alpha\,,\qquad \frac{\partial F}{\partial s}=\sum_{\alpha=1}^rA^\alpha_2(t,s)X_\alpha,
$$
where $X_1,\ldots,X_r$ are fundamental vector fields of an action of $G$ on a manifold $M$,
is a PDE Lie system related to \eqref{eq:Lax_pair}.
\end{example}
\begin{example} Let us again analyse a system of PDEs associated with Floquet theory \cite{FLV10}, which is here shown to be a PDE Lie system for the first time. Additionally, we will show how relate it to $k$-contact geometry. Let $G$ be an $r$-dimensional Lie group and let $\sigma$ be a smooth map $\R^{2} \ni (s,t) \mapsto \sigma(s,t) \in G$. The tangent vectors to the image of $\sigma$ are spanned by 
\begin{equation}
\label{eq:parametrized_surface}
\sigma_{s^*}(s,t)\equiv\parder{\sigma(s,t)}{s}=\sum_{\alpha=1}^rf_{s}^{\alpha}(s,t)R_{\sigma(s,t)^*}v_{\alpha}\,, \qquad \sigma_{t^*}(s,t)\equiv\parder{\sigma(s,t)}{t}=\sum_{\alpha=1}^rf_{t}^{\alpha}(s,t)R_{\sigma(s,t)^*}v_{\alpha}\,,
\end{equation}
where $\{v_{1}, \ldots, v_{r}\}$ is a basis of the Lie algebra $\mathfrak{g} \cong \T_{e}G$. By choosing  a base of right-invariant vector fields $X^{R}_1,\ldots, X_{r}^{R}$ on $G$, system \eqref{eq:parametrized_surface} can be written as

\begin{equation}
\parder{\sigma(s,t)}{s}=\sum_{\alpha=1}^rf_{s}^{\alpha}(s,t)X^{R}_{\alpha}(\sigma(s,t))\,, \qquad \parder{\sigma(s,t)}{t}=\sum_{\alpha=1}^rf_{t}^{\alpha}(s,t)X^{R}_{\alpha}(\sigma(s,t))\,.  
\end{equation}
The associated $\mathbb{R}^2$-dependent two-vector field $\bm X$ on $G$ is given by 
\begin{equation}
    \bm X=\left(\sum_{\alpha=1}^{r}f_{s}^{\alpha}(s,t)X^{R}_{\alpha}(\sigma), \sum_{\alpha=1}^{r}f_{t}^{\alpha}(s,t)X^{R}_{\alpha}(\sigma)\right)
\end{equation}
and the integrability condition reads
$$
\left[\parder{}{s}+ \sum_{\alpha=1}^{r}f_{s}^{\alpha}(s,t)X^{R}_{\alpha}(\sigma)\ ,\ \parder{}{t}+\sum_{\alpha=1}^{r}f_{t}^{\alpha}(s,t)X^{R}_{\alpha}(\sigma)\right]=0\,.
$$
In certain cases, one can transform the right-invariant vector fields $(X_1)_{(s,t)}=\sum_{\alpha=1}^{r}f_{s}^{\alpha}(s,t)X^{R}_{\alpha} $, $(X_2)_{(s,t)}=\sum_{\alpha=1}^{r}f_{t}^{\alpha}(s,t)X^{R}_{\alpha} $ into $\bm \eta$-Hamiltonian vector fields related to a $k$-contact form $\bm \eta$, making \eqref{eq:parametrized_surface} a $k$-contact PDE Lie system. It is enough to use the properties of the Lie algebra of a Lie group $G$ to construct a left-invariant $k$-contact form  as done in \cite{LRS_25} for Lie algebras of certain special unitary Lie groups. 
\end{example}
The above example suggests us the following definition.

\begin{definition}
    A {\it $k$-contact PDE Lie system} is a triple $(M,\bm\eta,{\bm X}:\mathbb{R}^k\times M\rightarrow \bigoplus^k\T M)$, where $\bm\eta$ is a $k$-contact form on $M$ and ${\bm X}$ is a PDE Lie system on $M$ with $t\in \mathbb{R}^k$, whose smallest 
    Lie algebra, $V^X$, consists of $\bm\eta$-Hamiltonian vector fields. 
\qeddiamond\end{definition}

Let us prove that a finite-dimensional Lie algebra of Hamiltonian vector fields relative to a $k$-contact form gives rise a PDE Lie system which can be understood as some Hamilton--De Donder--Weyl equations.

\begin{theorem} 
\label{thm:eta_Hamiltonian_k_vector_fields}
If $X_1,\ldots,X_k$ are $\bm\eta$-Hamiltonian vector fields relative to a co-oriented $\bm \eta$-contact manifold $(M,\bm\eta)$, then $(X_1,\ldots,X_k)$ is an ${\bm \eta}$-Hamiltonian $k$-vector field.
\end{theorem}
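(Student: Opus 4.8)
The plan is to produce a single function $h\in\Cinfty(M)$ serving as an $\bm\eta$-Hamiltonian function for the tuple $(X_1,\ldots,X_k)$, and the natural candidate is the ``trace'' of the $\bm\eta$-Hamiltonian $k$-functions of the $X_\alpha$.

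First I would unpack the hypothesis. For each $\alpha=1,\ldots,k$, write $\bm h_\alpha = -\iota_{X_\alpha}\bm\eta = \sum_{\beta=1}^k h_\alpha^\beta e_\beta$ for the $\bm\eta$-Hamiltonian $k$-function of $X_\alpha$. By Proposition \ref{prop:eta-Hamiltonian-vector-field}, for all $\alpha,\beta=1,\ldots,k$,
\[
\iota_{X_\alpha}\eta^\beta = -h_\alpha^\beta, \qquad \iota_{X_\alpha}\d\eta^\beta = \d h_\alpha^\beta - \sum_{\gamma=1}^k (R_\gamma h_\alpha^\beta)\,\eta^\gamma .
\]
Then I would set $h := \sum_{\alpha=1}^k h_\alpha^\alpha\in\Cinfty(M)$ and observe that, since a $k$-vector field on $M$ is by definition nothing but an ordered $k$-tuple of vector fields (Section \ref{Sec:k-vectorfields}), it only remains to check that $(X_1,\ldots,X_k)$ satisfies the two defining identities \eqref{eq:contact-Ham-equations-trivial-1}. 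Taking $\beta=\alpha$ in the displayed identities and summing over $\alpha$ gives $\sum_{\alpha=1}^k\iota_{X_\alpha}\eta^\alpha = -\sum_{\alpha=1}^k h_\alpha^\alpha = -h$ and, using linearity of $\d$ and of the $R_\gamma$,
\[
\sum_{\alpha=1}^k \iota_{X_\alpha}\d\eta^\alpha = \d\!\Big(\sum_{\alpha=1}^k h_\alpha^\alpha\Big) - \sum_{\gamma=1}^k\Big(R_\gamma\sum_{\alpha=1}^k h_\alpha^\alpha\Big)\eta^\gamma = \d h - \sum_{\gamma=1}^k (R_\gamma h)\,\eta^\gamma ,
\]
which are exactly \eqref{eq:contact-Ham-equations-trivial-1}; hence $(X_1,\ldots,X_k) = \bfX^c_h$.

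There is no real obstacle here: the statement is formal and follows purely from Proposition \ref{prop:eta-Hamiltonian-vector-field}, with no integrability or mutual-compatibility requirement imposed on the $X_\alpha$. The only genuine choice is recognising that the $k$-vector field equations \eqref{eq:contact-Ham-equations-trivial-1} involve only the diagonal contractions $\iota_{X_\alpha}\eta^\alpha$ and $\iota_{X_\alpha}\d\eta^\alpha$, so that one should feed in precisely the $\beta=\alpha$ instances of the $\bm\eta$-Hamiltonian conditions and assemble $h$ as $\sum_\alpha h_\alpha^\alpha$. It may be worth remarking, in line with the comment following the definition of $\bm\eta$-Hamiltonian $k$-vector fields, that $h$ does not determine $(X_1,\ldots,X_k)$ uniquely, so the converse assignment is far from canonical.
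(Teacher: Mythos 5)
Your proof is correct and follows essentially the same route as the paper: both take the ``diagonal'' components $h_\alpha^\alpha=\langle\bm h_\alpha,e^\alpha\rangle$ of the $\bm\eta$-Hamiltonian $k$-functions, sum them to form $h$, and verify the defining equations \eqref{eq:contact-Ham-equations-trivial-1} by summing the $\beta=\alpha$ instances of Proposition \ref{prop:eta-Hamiltonian-vector-field}. The paper merely phrases the extraction of $h_\alpha^\alpha$ via pairing with the dual basis $\{e^1,\ldots,e^k\}$ and writes the Reeb terms with the tensor $\mathfrak{R}_{\bm\eta}$, which is the same computation.
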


\begin{proof}
Since $X_1,\ldots,X_k$ are $\bm \eta$-Hamiltonian vector fields relative to $(M,\bm\eta)$, they posses a series of $\bm\eta$-contact $k$-functions ${\bm h}_1,\ldots,{\bm h}_k$ such that
$$
\iota_{X_\alpha}\d\bm\eta=\d\bm h_\alpha-\mathfrak{R}_{{\bm \eta}}\bm h_\alpha\,,\qquad \iota_{X_\alpha}\bm\eta=-\bm h_\alpha\,, \qquad \alpha=1,\ldots,k\,.
$$
If $\{e^1,\ldots,e^k\}$ is the dual basis  to $\{e_1,\ldots,e_k\}$ in $\mathbb{R}^k$, one has that
$$
\langle\iota_{X_\alpha}\d\bm\eta,e^\alpha\rangle=\langle\d\bm h_\alpha,e^\alpha\rangle-\langle\mathfrak{R}_{\bm\eta}\bm h_\alpha,e^\alpha\rangle\,,\qquad \langle\iota_{X_\alpha}\bm\eta,e^\alpha\rangle=-\langle \bm h_\alpha,e^\alpha\rangle\,, \qquad \alpha=1,\ldots,k\,.
$$
Summing over $\alpha=1,\ldots,k$, one gets
$$
\sum_{\alpha=1}^k\iota_{X_\alpha}\d\eta^\alpha=\d \sum_{\alpha=1}^k\langle \bm h_\alpha,e^\alpha\rangle -\mathfrak{R}_{\bm\eta}\sum_{\alpha=1}^k\langle \bm h_\alpha,e^\alpha\rangle\,,\qquad  \iota_{X_\alpha}\eta^\alpha=-\sum_{\alpha=1}^k\langle \bm h_\alpha,e^\alpha\rangle\,.
$$
In other words, $(X_1,\ldots,X_k)$ is an $\bm\eta$-Hamiltonian $k$-vector field. 
\end{proof}

\begin{example} Let us analyse Floquet theory for Lie systems related to a Lie algebra isomorphic to $\mathfrak{gl}_2$. Lie systems of this type appear in numerous applications \cite{BCHLS13,BHLS15,FLV10}. For instance, it appears in the Floquet theory and geometric phases for oscillators with $t$-dependent periodic frequency, mass and drift \cite{LS_20}. Consider the system of PDEs of the form
\begin{equation}
\label{eq:Floquet_PDE_Lie_system}
\frac{\partial A}{\partial t}=Af_1(t,s)\,,\qquad\frac{\partial A}{\partial s} = Af_2(t,s)\,,\qquad A\in \mathrm{GL}_2\,,
\end{equation}
where $f_1,f_2\colon\mathbb{R}^2\rightarrow \mathfrak{gl}_2$ are functions such that the above system of PDEs is integrable, while $\mathrm{GL}_2$ is the general linear group of invertible $2\times 2$ matrices. In other words, the above system of PDEs can be written as
$$
\frac{\partial A} {\partial t}=X_1^L(t,s,A)\,,\qquad \frac{\partial A} {\partial s}=X_2^L(t,s,A)\,, 
$$
where $X_i^L(t,s,A)$, with $i=1,2$, are $\mathbb{R}^2$-dependent vector fields on $\mathrm{GL}_2$. Then,  \eqref{eq:Floquet_PDE_Lie_system} is a PDE Lie system associated with the $(s,t)$-dependent two-vector field $\bm X=(X^L_1,X^L_2)$ on ${\rm GL}_2$. Moreover, 
$$
\left[\parder{}{t}+ X_{1}^L\ ,\ \parder{}{s}+ X_{2}^{L}\right]=0\,.
$$
Moreover, one can consider a basis of right-invariant one-forms $\eta^R_1,\ldots,\eta^R_4$ dual at the neutral element $e$ in $\mathrm{GL}_2$ to a basis of $\mathfrak{gl}_2$ such that
\[[v_1,v_2]=v_1\,,\qquad [v_1,v_3]=2v_2\,,\qquad [v_2,v_3]=v_3\,,
\]
and $[v_4,v_{i}]=0$ for $i=1,\ldots,4$. Then, $\bm \eta^{\mathrm{GL}_2}=\eta^R_2 \otimes e_{1} + \eta^R_4 \otimes e_{2}$ is a two-contact form since $\d\eta^R_2=2\eta^R_1\wedge\eta^R_3$ and $\d\eta^R_4=0$. By recalling Example \ref{Ex:TDepIsotropic}, one gets that $X^L_{1}(t,s,A), X^L_{2}(t,s,A)$ are $\mathbb{R}^2$-dependent $\bm \eta^{{\rm GL}_2}$-Hamiltonian vector fields with $\mathbb{R}^2$-dependent $\bm \eta^{{\rm GL}_2}$-Hamiltonian two-functions $\bm h_{1}, \bm h_{2}$. Therefore, Theorem \ref{thm:eta_Hamiltonian_k_vector_fields} ensures that the $\bm X$ associated with \eqref{eq:Floquet_PDE_Lie_system} is an  $\mathbb{R}^2$-dependent $\bm \eta^{{\rm GL}_2}$-Hamiltonian two-vector field with the  $\mathbb{R}^2$-dependent Hamiltonian function $h=\langle\bm h_{1}, e^{1}\rangle+\langle\bm h_{2}, e^{2}\rangle$. Moreover, $({\rm GL}_2,\bm \eta^{{\rm GL}_2},{\bm X}:\mathbb{R}^2\times {\rm GL}_2\rightarrow \bigoplus^2\T {\rm GL}_2)$ is a two-contact PDE Lie system. 


\end{example}

The whole theory of $k$-contact Lie systems can be quite straightforwardly generalised to $k$-contact PDE Lie systems. For instance, every $k$-contact PDE Lie system ${\bm X}$ admits a Vessiot--Guldberg Lie algebra $V$ of Hamiltonian vector fields relative to a $k$-contact form $\bm \eta$. In turn, $V$ is related to a Lie algebra $\mathfrak{W}$ of $\bm \eta$-Hamiltonian $k$-functions. Moreover, every $k$-contact PDE Lie system ${\bm X}$ has $k$-components $X_1,\ldots,X_k$ and every $X_\alpha$ of them admits an $\mathbb{R}^k$-dependent $\bm \eta$-Hamiltonian $k$-function $\bm g_\alpha$ taking values in $\mathfrak{W}$. Then,  the analogue of \eqref{eq:Derivation} for a $k$-contact PDE Lie systems ${\bm X}$ and a general $\bm \eta$-Hamiltonian $k$-function ${\bm I}$ is
\begin{equation}\label{eq:Derivation2}
\frac{\partial {\bm I}}{\partial t^\beta}(t)=\frac{\partial  {\bm I}}{\partial t^\beta}(t)+(X_\beta)_t {\bm I}_t=\frac{\partial  {\bm I}}{\partial t^\beta}(t)+
\{{\bm I}_t,{\bm g}_\beta\},\qquad \beta=1,\ldots,k.
\end{equation}
A whole theory of master symmetries and constants of motion for $k$-contact PDE Lie systems can be developed on the basis of the above formula and the methods for $k$-contact Lie systems.

\section{Conclusions and outlook}\label{Sec:ConclusionsOutlook}
By using some new ideas based on the notions in \cite{LRS_25},  this paper introduces and establishes the theoretical and practical significance of $k$-contact Lie systems. Many examples of $k$-contact Lie systems have been presented, and $k$-contact geometry has been employed to study some of their features like $t$-dependent constants of motion, generators of constants of motion of order $s$, and master symmetries.  Applications demonstrate the approach versatility of our ideas.  The work also inspects PDE Lie systems and their potential compatibility with $k$-contact forms. 

It is quite certain that a coalgebra method is available for $k$-contact Lie systems of projectable type. The method could be also extended to $k$-contact PDE Lie systems admitting a projectable condition. Our future aim will be to develop it analysing its properties and main applications in the future. Note also that the study of master symmetries and generators of constants of motion of order $s$ has been just started, and there are many further possibilities that can be analysed. In particular, we aim to apply these ideas more in detail to PDE Lie systems with a compatible $k$-contact form. Moreover, it is still convenient to characterise $k$-contact Lie groups on Lie groups of dimension larger than three to extend the work \cite{LR_23}, which characterises $k$-contact Lie systems on Lie groups of dimension three.  

\section*{Appendix}
This appendix provides some technical issues that can be also found in \cite{LRS_25} and are here given to make the work more self-contained. Nevertheless, they may be skipped in a first lecture and used just in case of necessity.

\begin{proposition}\label{Prop:WellDefined} The map \eqref{eq:CurvDis} is well defined.
\end{proposition}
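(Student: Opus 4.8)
The plan is to show that $\pi([X,X']_x)$ depends on the local vector fields $X,X'$ only through their values $X_x=v$ and $X'_x=v'$, which is precisely the assertion that $\rho$ is well defined. Since $\mathcal{D}$ is a regular distribution, around $x$ it admits a local frame $Y_1,\ldots,Y_r$; hence every $v\in\mathcal{D}_x$ does extend to a local section of $\mathcal{D}$, so the definition of $\rho$ is not vacuous, and it suffices to check independence of the chosen extensions.

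First I would record that, by the very definition of a vector field taking values in $\mathcal{D}$, one has $\pi\circ X=0$ and $\pi\circ X'=0$, because $X_y,X'_y\in\mathcal{D}_y=\ker\pi_y$ for all $y$ in the domain. Next, using the Leibniz rule for the Lie bracket, for any $f\in\Cinfty(U)$ one has $[fX,X']=f[X,X']-(X'f)X$ and $[X,fX']=f[X,X']+(Xf)X'$; applying the bundle projection $\pi\colon\T U\to\T U/\mathcal{D}|_U$ and using $\pi(X)=\pi(X')=0$ gives $\pi([fX,X'])=f\,\pi([X,X'])$ and $\pi([X,fX'])=f\,\pi([X,X'])$. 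Thus the assignment sending a pair of sections of $\mathcal{D}|_U$ to $\pi([X,X'])$ is $\Cinfty(U)$-bilinear.

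Then I would conclude by the standard localization argument for tensorial maps: writing $X=\sum_{i=1}^r a^iY_i$ and $X'=\sum_{j=1}^r b^jY_j$ with $a^i,b^j\in\Cinfty(U)$, the $\Cinfty(U)$-bilinearity just established yields $\pi([X,X']_x)=\sum_{i,j=1}^r a^i(x)b^j(x)\,\pi([Y_i,Y_j]_x)$, which depends on $X,X'$ only through the numbers $a^i(x),b^j(x)$, hence only through $X_x$ and $X'_x$ since $Y_1(x),\ldots,Y_r(x)$ is a basis of $\mathcal{D}_x$. Therefore any two local extensions of $v$ and $v'$ produce the same value of $\rho(v,v')$, so $\rho$ is well defined. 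I do not expect a genuine obstacle here; the only point requiring slight care is to establish the $\Cinfty(U)$-bilinearity (equivalently, to combine the Leibniz identity with $\pi(X)=\pi(X')=0$) before passing to a local frame, since this is what guarantees that the pointwise value at $x$ is meaningful.
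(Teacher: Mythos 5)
Your proof is correct, but it follows a different route from the paper. You establish that $(X,X')\mapsto\pi([X,X'])$ is $\Cinfty(U)$-bilinear via the Leibniz rule together with $\pi(X)=\pi(X')=0$, and then invoke the standard localisation argument in a local frame of $\mathcal{D}$ (whose existence you rightly note uses regularity of $\mathcal{D}$) to conclude that the value at $x$ depends only on $v=X_x$ and $v'=X'_x$. The paper instead picks any $\bm\zeta\in\Omega^1(U,\R^k)$ with $\ker\bm\zeta=\mathcal{D}|_U$, which trivialises $\T U/\mathcal{D}|_U$, and applies the Cartan formula for $\d\bm\zeta$: since $\inn{X}\bm\zeta=\inn{X'}\bm\zeta=0$, one gets $\bm\zeta_x([X,X']_x)=-\d\bm\zeta_x(X_x,X'_x)$, so the bracket term is manifestly pointwise. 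Your argument is the more elementary and self-contained tensoriality proof and does not require choosing a defining one-form; the paper's computation buys more, namely the explicit identity $\rho(v,v')=-\d\bm\zeta_x(v,v')$ of \eqref{eq:rho}, which is reused later (for instance in the proof of Proposition \ref{Prop:MaxNonkCon}) to identify maximal non-integrability of $\ker\bm\eta$ with non-degeneracy of $\d\bm\eta$ on $\ker\bm\eta$. If you wanted that identity as well, you would still have to carry out the short Cartan-formula computation separately.
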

\begin{proof}
     The map does not depend on the vector fields $X,X'$ chosen to extend $v,v'$. Moreover, any $\bm\zeta\in \Omega^1(U,\R^k)$ such that $\ker \bm\zeta=\mathcal{D}|_U$ gives rise a trivialisation $\T U/\mathcal{D}|_U\simeq U\times \mathbb{R}^k$ and $\rho$ can be described  as 

\begin{equation}\label{eq:rho}
    \rho(v,v') = \bm\zeta_x([X,X']_x) = X_x\inn{X' }\bm\zeta  - X'_x\inn{X }
    \bm\zeta -\d\bm\zeta_x(X_x,X'_x) = -\d\bm\zeta_x(v,v')\,, \quad \forall v,v'\in \mathcal{D}_x\,.
\end{equation}           
\end{proof}

\begin{proof}[Proof of Proposition \ref{prop:eta-Hamiltonian-vector-field}]
    If $X$ is an $\bm\eta$-Hamiltonian vector field, then $[X,\mathcal{D}]\subset \mathcal{D}$. This implies that $\Lie_X\eta^\alpha=\sum_{\beta=1}^kf^\alpha_\beta\eta^\beta$ for certain functions $f^\alpha_\beta\in \Cinfty(M)$ and $\alpha,\beta=1,\ldots,k$. Then, defining $h^\alpha=-\iota_X\eta^\alpha$ for $\alpha=1,\ldots,k$, one has
    $$
        (\d \iota_X+\iota_X\d)\eta^\alpha=\sum_{\beta=1}^kf^\alpha_\beta\eta^\beta, \quad \alpha=1,\ldots,k \quad \Rightarrow \quad \iota_X\d \eta^\alpha=\d h^\alpha +\sum_{\beta=1}^kf^\alpha_\beta\eta^\beta,\qquad \alpha=1,\ldots,k\,.
    $$
    Contracting with $R_\gamma$, one obtains
    \begin{equation}\label{eq:Contr}
        0=\inn{R_\gamma}\inn{X}\d \eta^\alpha=R_\gamma h^\alpha+f^\alpha_\gamma,\qquad \alpha,\gamma=1,\ldots,k\,,
    \end{equation}
    which implies that $f_\gamma^\alpha = -R_\gamma h^\alpha$, for $\alpha,\gamma=1,\ldots,k$, and thus
    $$
        \inn{X}\d\eta^\alpha=\d h^\alpha -\sum_{\beta=1}^k\eta^\beta(R_\beta h^\alpha) ,\qquad \alpha=1,\ldots,k\,.
    $$
    The converse is immediate.
    \end{proof}
  \begin{proof}[Proof of  Proposition \ref{Prop:MaxNonkCon}]
    Let us proceed by reducing to absurd. If $\d\bm \eta$ is degenerate when restricted to $\ker \bm \eta$ at a certain $x\in M$, then there exists a non-zero tangent vector $v\in \ker\bm\eta_x$ such that $\d\bm\eta_x(v,w) = 0$ for every tangent vector $w\in \ker \bm\eta_x$. One gets that $\d\bm\eta_x(v,R_x) = 0$ for each Reeb vector field $R$ at $x$. Since $\bm\eta$ has $k$ Reeb vector fields spanning $\ker\d\bm\eta$, these vector fields, along with a basis of $\ker\bm \eta$, give rise at $x$ to a basis of $\T_xM$. It follows that $0\neq v\in \ker \d\bm\eta_x\cap\ker\bm\eta_x$ and $\bm \eta$ is not a $k$-contact form. This is a contradiction and thus $\d\bm\eta$ must be non-degenerate when restricted to $\ker\bm\eta$. In view of \eqref{eq:rho}, it follows that $\ker \bm\eta$ is maximally non-integrable.
\end{proof}

\begin{proof}[Proof of Theorem \ref{Prop:LocalEqu}]
    The direct part is a consequence of Proposition \ref{Prop:MaxNonkCon} and the Reeb vector fields of $\bm \eta$.
    
    Let us prove the converse part. Given the vector fields $S_1,\ldots,S_k$ on $U$, consider the differential one-forms $\eta^1,\ldots,\eta^k$ vanishing on $\mathcal{D}$ and dual to $S_1,\ldots,S_k$ on $U$. Such differential one-forms are unique and exist due to decomposition \eqref{eq:Dec}. Then, they give rise to $\bm\eta = \sum_{\alpha=1}^k\eta^\alpha\otimes e_\alpha\in \Omega^1(U,\mathbb{R}^k)$ and $\eta^1\wedge\ldots\wedge \eta^k\neq 0$ on $U$. For a basis $X_{k+1},\ldots,X_m$ of $\ker \bm \eta$ around $x$, one has that the fact that $[S_i,\mathcal{D}]\subset \mathcal{D}$ and $[S_i,S_j]=0$ for $i,j=1,\ldots,k$ implies that
    $$
        \d\bm\eta(S_\beta,S_\gamma) = 0\,,\qquad \d\bm\eta(S_\beta,X_\alpha) = 0\,,\qquad \alpha=k+1,\ldots, m\,,\quad \beta,\gamma=1,\ldots,k\,.
    $$
    Therefore, the rank of $\ker\d\bm\eta$ is at least $k$, and $S_1,\ldots,S_k$ span a supplementary distribution to $\mathcal{D}$ around $x\in M$. Moreover, there exists no tangent vector $v'_{x'}\in \ker \d\bm \eta$ such that $v_{x'}\wedge S_1(x')\wedge\ldots\wedge S_k(x')\neq 0$ for $x'\in U$, as otherwise there would be an element $0\neq v_{x'}\in \ker \bm \eta_{x'}\cap \ker\d\bm\eta_{x'}$. Such an element would belong to the kernel of $\d\bm\eta_{x'}$ restricted to $\ker \bm\eta_{x'}$ and, since $\mathcal{D}$ is maximally non-integrable, one has that $v_{x'}=0$. This is a contradiction, $\ker \d \bm \eta$ has rank $k$,  while $
    \rk 
    \mathcal{D}>0$ and $\mathcal{D}$ has corank $k$, and $\ker \d \bm\eta \cap \ker \bm\eta=0$. Hence, $\bm \eta$ is a $k$-contact form and $\mathcal{D}$ is a $k$-contact distribution.
\end{proof}

\begin{lemma}\label{Lem:MaurerCartan} Given an $n$-dimensional Lie algebra of vector fields $\langle Y_1,\ldots,Y_n\rangle$ on an $n$-dimensional manifold $M$ whose elements span  $\T M$, and let $\Upsilon_1,\ldots,\Upsilon_n$ be a dual basis to $Y_1,\ldots,Y_n$, one has
\[
 \d\Upsilon^i=-\frac 12 \sum_{j,k=1}^nc_{jk}
 \,^i\Upsilon^j\wedge \Upsilon^k,\qquad i=1,\ldots,n,
\]
for $[Y_i,Y_j]=\sum_{k=1}^nc_{ij}\,^kY_k$ and $1\leq i<j\leq n$.
\end{lemma}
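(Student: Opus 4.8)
The statement to prove is Lemma \ref{Lem:MaurerCartan}: the Maurer–Cartan-type formula for the dual basis of a finite-dimensional Lie algebra of vector fields spanning the tangent bundle. This is a completely standard computation, so the proof plan is short.

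The plan is to evaluate both sides of the claimed identity on an arbitrary pair of basis vector fields $Y_a, Y_b$ and check equality; since $Y_1,\ldots,Y_n$ span $\T M$ pointwise, agreement on all such pairs forces the equality of two-forms. First I would recall the invariant formula for the exterior derivative of a one-form,
$$
\d\Upsilon^i(Y_a,Y_b) = Y_a\bigl(\Upsilon^i(Y_b)\bigr) - Y_b\bigl(\Upsilon^i(Y_a)\bigr) - \Upsilon^i([Y_a,Y_b]).
$$
The first two terms vanish because $\Upsilon^i(Y_b) = \delta^i_b$ and $\Upsilon^i(Y_a)=\delta^i_a$ are constant functions, so their derivatives along any vector field are zero. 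Hence $\d\Upsilon^i(Y_a,Y_b) = -\Upsilon^i([Y_a,Y_b]) = -\sum_{k=1}^n c_{ab}{}^k\,\Upsilon^i(Y_k) = -c_{ab}{}^i$, using $[Y_a,Y_b]=\sum_k c_{ab}{}^k Y_k$ and $\Upsilon^i(Y_k)=\delta^i_k$.

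Then I would evaluate the right-hand side on the same pair:
$$
-\tfrac12\sum_{j,k=1}^n c_{jk}{}^i\,(\Upsilon^j\wedge\Upsilon^k)(Y_a,Y_b) = -\tfrac12\sum_{j,k=1}^n c_{jk}{}^i\bigl(\delta^j_a\delta^k_b - \delta^j_b\delta^k_a\bigr) = -\tfrac12\bigl(c_{ab}{}^i - c_{ba}{}^i\bigr) = -c_{ab}{}^i,
$$
where the last equality uses antisymmetry $c_{ba}{}^i = -c_{ab}{}^i$ of the structure constants. Both sides therefore agree on every pair $(Y_a,Y_b)$, and since these pairs span $\Lambda^2\T_x M$ at each point, the two-forms coincide. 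The restriction $1\le i<j\le n$ in the hypothesis is merely a non-redundant way of listing the brackets and plays no role beyond fixing notation. There is no real obstacle here; the only point requiring a word of care is the combinatorial factor $\tfrac12$ and the sign conventions for $\Upsilon^j\wedge\Upsilon^k$ and for the structure constants, which I have matched above. One may alternatively remark that the statement is the pullback of the classical Maurer–Cartan equations under the local diffeomorphism identifying $M$ (near a generic point) with a Lie group, but the direct evaluation is cleaner and self-contained.
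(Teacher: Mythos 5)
Your proof is correct and follows essentially the same route as the paper: evaluate $\d\Upsilon^i$ on pairs of basis vector fields via the invariant formula for the exterior derivative, use constancy of $\Upsilon^i(Y_j)=\delta^i_j$ to kill the derivative terms, and conclude from the fact that $Y_1,\ldots,Y_n$ span $\T M$. Your explicit check of the right-hand side with the factor $\tfrac12$ and the antisymmetry of the structure constants is a detail the paper leaves implicit, but it is the same argument.
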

\begin{proof} By using the formula for the differential of a one-form, it follows that
$$
 \d\Upsilon^i(Y_j,Y_k)=Y_j \Upsilon^i(Y_k)-Y_k\Upsilon^i(Y_j)-\Upsilon^i([Y_j,Y_k])=-\Upsilon^i([Y_j,Y_k])=-\Upsilon^i\left(\sum_{l=1}^6c_{jk}\,^lY_l\right)=-c_{jk}\,\!^i\,
$$
for every $i,j,k=1,\ldots,n$. Since $Y_1,\ldots,Y_n$ form a basis of vector fields on $M$, this finishes the proof.
\end{proof}

\begin{lemma}\label{Lem:XInt} The system of PDEs \eqref{Eq:PDELieSystemGstructure} is integrable.
\end{lemma}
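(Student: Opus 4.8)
The plan is to prove integrability of the $M$-dependent $m$-vector field $\bm X=(X_1,\dots,X_m)$ on $N$ attached to \eqref{Eq:PDELieSystemGstructure} by verifying the zero-curvature (Frobenius) condition: the vector fields $\widetilde X_i:=\parder{}{q^i}+X_i$ on the product $M\times N$ commute pairwise, $[\widetilde X_i,\widetilde X_l]=0$ for $i,l=1,\dots,m$. This is precisely the integrability condition for the associated $t$-dependent system of PDEs, here with $t\equiv q\in M$; as recalled after \eqref{eq:SysPDEs}, its vanishing guarantees that every point of $M\times N$ lies on an integral leaf of $\langle\widetilde X_1,\dots,\widetilde X_m\rangle$, and these leaves are locally graphs of solutions $F\colon M\to N$ of \eqref{Eq:PDELieSystemGstructure}.

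The first computational step is to expand the bracket. Writing $X_i=\sum_{\beta=1}^{r}\Upsilon^\beta_i(q)\,\alpha_\beta$ with $r=\dim\mathfrak g$, in a basis $\{\xi_1,\dots,\xi_r\}$ of $\mathfrak g$, where $\alpha_\beta=\alpha(\xi_\beta)\in\X(N)$ depends only on $y\in N$ and each $\Upsilon^\beta_i$ depends only on $q\in M$, the mixed Leibniz terms drop out (indeed $\alpha_\beta\Upsilon^\gamma_l=0$, and $\parder{}{q^i}$ annihilates the $\alpha$'s), so that
$$
[\widetilde X_i,\widetilde X_l]=\sum_{\mu=1}^{r}\left(\parder{\Upsilon^\mu_l}{q^i}-\parder{\Upsilon^\mu_i}{q^l}\right)\alpha_\mu+\sum_{\beta,\gamma=1}^{r}\Upsilon^\beta_i\,\Upsilon^\gamma_l\,[\alpha_\beta,\alpha_\gamma].
$$
Since $\alpha\colon\mathfrak g\to\X(N)$ is a Lie algebra homomorphism, $[\alpha_\beta,\alpha_\gamma]=\alpha([\xi_\beta,\xi_\gamma])=\sum_{\mu=1}^{r}c_{\beta\gamma}{}^{\mu}\alpha_\mu$, where $c_{\beta\gamma}{}^{\mu}$ are the structure constants of $\mathfrak g$, whence
$$
[\widetilde X_i,\widetilde X_l]=\sum_{\mu=1}^{r}\left(\parder{\Upsilon^\mu_l}{q^i}-\parder{\Upsilon^\mu_i}{q^l}+\sum_{\beta,\gamma=1}^{r}c_{\beta\gamma}{}^{\mu}\,\Upsilon^\beta_i\,\Upsilon^\gamma_l\right)\alpha_\mu.
$$

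The final step is to kill every coefficient using the $\mathfrak g$-structure equation \eqref{eq:gstructure}. Decomposing $\bm\Upsilon=\sum_{\beta=1}^{r}\Upsilon^\beta\otimes\xi_\beta$ with $\Upsilon^\beta=\sum_i\Upsilon^\beta_i\,\d q^i$, equation \eqref{eq:gstructure} reads, component by component, $\d\Upsilon^\mu=-\tfrac12\sum_{\beta,\gamma}c_{\beta\gamma}{}^{\mu}\,\Upsilon^\beta\wedge\Upsilon^\gamma$ (a Maurer--Cartan-type identity, cf. Lemma \ref{Lem:MaurerCartan}); evaluating it on the pair $\bigl(\parder{}{q^i},\parder{}{q^l}\bigr)$ yields exactly $\parder{\Upsilon^\mu_l}{q^i}-\parder{\Upsilon^\mu_i}{q^l}+\sum_{\beta,\gamma}c_{\beta\gamma}{}^{\mu}\Upsilon^\beta_i\Upsilon^\gamma_l=0$ for $\mu=1,\dots,r$. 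Hence $[\widetilde X_i,\widetilde X_l]=0$ for all $i,l$, so \eqref{Eq:PDELieSystemGstructure} is integrable. I do not anticipate a genuine obstacle: the only care needed is the bookkeeping of which factors depend on $q\in M$ and which on $y\in N$, ensuring the cross terms in the Leibniz rule vanish, together with the correct component form of \eqref{eq:gstructure}; the substance is just the homomorphism property of $\alpha$ combined with the structure identity defining a $\mathfrak g$-structure.
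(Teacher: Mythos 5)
Your proposal is correct and follows essentially the same route as the paper's own proof: both show that the vector fields $\parder{}{q^i}+X_i$ on $M\times N$ commute by expanding the bracket, using that $\alpha$ is a Lie algebra homomorphism, and identifying the resulting coefficients with the components of $\d\bm\Upsilon+\tfrac12[\bm\Upsilon,\bm\Upsilon]$, which vanish by \eqref{eq:gstructure}. Your component-wise evaluation of the structure equation on $\bigl(\parder{}{q^i},\parder{}{q^l}\bigr)$ is, if anything, slightly more carefully stated than the paper's final equality.
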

\begin{proof} The result follows from the fact that the vector fields $\partial/\partial q^i+X_i$, with $i=1,\ldots,m$, on $M\times N$ commute among themselves, as shown in the following calculation
\begin{multline}
\left[\parder{}{q^i}+\sum_{\beta=1}^{\dim \mathfrak{g}}\alpha
_\beta(y)\Upsilon^\beta_i(q), \parder{}{q^j}+\sum_{\gamma=1}^{\dim \mathfrak{g}}\alpha 
_\gamma(y)\Upsilon^\gamma_j(q)\right]=
\\=\sum_{\gamma,\beta=1}^{\dim \mathfrak{g}}\Upsilon_i^\beta(q)\Upsilon_j^\gamma(q)[\alpha_\beta,\alpha_\gamma](y)+\sum_{\beta=1}^{\dim \mathfrak{g}}\alpha_\beta(y)\left(\parder{\Upsilon^\beta_j(q)}{q^i}-\parder{\Upsilon^\beta_i(q)}{q^j}\right)=\\
=\sum_{\gamma=1}^{\dim \mathfrak{g}}\left(\parder{\Upsilon^\gamma_j}{q_i}-\parder{\Upsilon^\gamma_i}{q_j}+\sum_{\beta,\epsilon=1}^{\dim \mathfrak{g}}\Upsilon^\beta_i\Upsilon^\epsilon_jc_{\beta\epsilon}\,\!^\gamma\right)(q)\alpha_\gamma(y)=2\left(\d\bm\Upsilon+\frac 12[\bm\Upsilon,\bm\Upsilon]\right) = 0\,,
\end{multline}
where the last equality follows from \eqref{eq:gstructure}.
\end{proof}
\section{Acknowledgements}
X. Rivas acknowledges partial financial support from a mikrogrant IDUB 01-D111-20-2004310 funded by the IDUB program of the Faculty of Physics (University of Warsaw) for his research stay. X. Rivas also acknowledges partial financial support from the Spanish Ministry of Science and Innovation, grants  PID2021-125515NB-C21, and RED2022-134301-T of AEI, and Ministry of Research and Universities of the Catalan Government, project 2021 SGR 00603. T. Sobczak acknowledges financial support from a doctoral grant offered by the the Doctoral School of Natural and Exact Sciences of the University of Warsaw. We would also like to thank the anonymous referees for their suggestions, which allowed us to improve the quality and clarity of this work. 

X. Rivas and J. de Lucas would like to devote this work to the memory of our colleague and friend, Miguel C. Mu\~noz–Lecanda, who
passed away on the Christmas’s Eve of 2023. He was, and will always be, a source of inspiration
for us.

\bibliographystyle{abbrv}
\bibliography{references.bib}

\end{document}